\shorttitle{Random walk approximation of FBSDEs} % insert short title here for use in running head
\newcommand{\rset}{\mathbb{R}}
 \newcommand{\R}{\mathbb{R}}
 \newcommand{\PP}{\mathbb{P}}
\newcommand{\ep}{\varepsilon}
\newcommand{\ind}{\mathbf{1}}
\newcommand{\e}{\mathbb{E}}
\newcommand{\p}{\mathbb{P}}
 \newcommand{\cf}{\mathcal{F}}
\newcommand{\cg}{\mathcal{G}}
\newcommand{\sX}{{\sf X}}
\newcommand{\sY}{{\sf Y}}
\newcommand{\sZ}{{\sf Z}}
\newcommand{\sff}{{\sf f}}
\newcommand{\eps}{\varepsilon}
\newcommand{\nb}{\nabla}
\newcommand{\cX}{\mathcal{X}}
\newcommand{\ti}{\tilde}
\newcommand{\les}{\hspace{-2em}}
\newcommand{\equa}{\begin{eqnarray*}}
\newcommand{\tion}{\end{eqnarray*}}
\newcommand{\equal}{\begin{eqnarray}}
\newcommand{\tionl}{\end{eqnarray}}
\newcommand{\pull}{\hspace{-2em}}
\newcommand{\half}{\frac{1}{2}}
\newcommand{\pr}[1]{\left( #1 \right)}
\newcommand{\cG}{\mathcal{G}}
\newcommand{\cD}{\mathcal{D}}
\newcommand{\less}{\hspace*{-4em}}
\def\norm#1{\|#1\|}
 \newtheorem{lemme}{Lemma}[section]
\newtheorem{hypo}{Assumption}[section]
\newtheorem{defi}{Definition}[section]
\newcommand{\ce}{}
\begin{document}

\title{Mean square rate of convergence for random walk approximation of 
forward-backward SDEs} % insert title - use \\ if it requires more than one line.

\authorone[Department of Mathematics and Statistics, University of Jyvaskyla]{Christel Geiss} % Affiliation is just the name of your university or institution
\authortwo[Univ. Grenoble Alpes, Univ. Savoie Mont Blanc, CNRS, LAMA]{C{\'e}line Labart}
\authorone[Department of Mathematics and Statistics, University of Jyvaskyla]{Antti Luoto}
\addressone{P.O.Box 35 (MaD) FI-40014 University of Jyvaskyla, Finland}
\addresstwo{Univ. Grenoble Alpes, Univ. Savoie Mont Blanc, CNRS, LAMA,
  73000 Chamb\'ery, France} % Your postal address goes here.

\begin{abstract}
Let $(Y, Z)$ denote the solution to a forward-backward SDE. If one
  constructs a random walk $B^n$ from the underlying Brownian motion $B$ by
  Skorohod embedding, one can show $L_2$-convergence of the
  corresponding solutions $(Y^n,Z^n)$ to $(Y, Z).$
  We estimate the rate of convergence  in dependence of smoothness properties, especially for a  terminal condition function in $C^{2,\alpha}$.\\
  The proof relies on an approximative representation of $Z^n$ and uses the
  concept of discretized Malliavin calculus.  Moreover, we use growth and
  smoothness properties of the PDE associated to the FBSDE as well as of the
  finite difference equations associated to the approximating stochastic
  equations. We derive these properties by probabilistic methods.% text of abstract goes here!
\end{abstract}

\keywords{Backward stochastic differential equations; approximation
scheme; finite difference equation; convergence rate;  random walk approximation} % insert keywords separated by a semicolon

\ams{60H10; 60H35; 60G50}{60H30} % insert the primary Maths Subject Classification number in the first bracket
         % and the secondary ams number(s) in the second bracket
         % e.g. \ams{60E20}{49G03;49F10}

\section{Introduction} % Initial capital letter, then lower case. No full stop.

% Write the text of your paper using normal LaTeX commands.
% For instance, you can use the `\cite' command~\cite{ref1}.
% When giving citations a numbering system is preferred~\cite{ref2},
% but an author--date system is also acceptable~\cite{ref3}.
Let $(\Omega, \cf, \PP)$ be a complete probability space carrying the standard
Brownian motion $B= (B_t)_{t \ge 0}$ and assume that $(\cf_t)_{t\ge 0}$
is the augmented natural filtration.  Let $(Y, Z)$ be the solution of the
forward-backward SDE (FBSDE)
\begin{align}\label{BSDE}
X_s&= x + \int_0^s b(r,X_r) dr + \int_0^s \sigma(r,X_r) dB_r,\notag \\
Y_s&= g( X_T) + \int_s^T f(r,X_r,Y_r,Z_r)dr - \int_s^T Z_r dB_r,  \quad \quad 0 \leq s  \leq T.
\end{align}
Let $(Y^n,Z^n)$ be the solution of the FBSDE if the Brownian motion $B$ is
replaced by a scaled random walk $B^n$ given by \equal \label{Bn} B^n_{t} =
\sqrt{h} \sum_{i=1}^{[t/h] }\varepsilon_i, \quad \quad 0 \leq t \leq T, \tionl
where $h= \tfrac{T}{n}$ and $(\varepsilon_i)_{i=1,2, \dots}$ is a sequence of
i.i.d.~Rademacher random variables. Then $(Y^n,Z^n)$ solves the discretized
FBSDE
\begin{align}\label{discrete-BSDE}
X^n_s &= x + \int_{(0,s]} b(r,X^n_{r-}) d[B^n]_r + \int_{(0,s]} \sigma(r,X^n_{r-})
         dB^n_r, 
  \notag \\
    Y^n_s &= g(X^n_T) + \int_{(s,T]} f(r,X^n_{r-}Y^n_{r-},Z^n_{r-})d[B^n]_r - \int_{(s,T]} Z^n _{r-} dB^n_r, \quad 0  \le s \leq T.
\end{align}

The approximation of BSDEs using random walk has been investigated by many
authors, also numerically (see, for example, \cite{Briand-etal},
\cite{JanczakII}, \cite{MaProMartTor}, \cite{MarMarTor}, \cite{MeminPengXu},
\cite{PengXu}, \cite{Cher_and_stadje}). In 2001, Briand et al.
\cite{Briand-etal} have shown weak convergence of $(Y^n,Z^n)$ to $(Y,Z)$ for a
Lipschitz continuous generator $f$ and a terminal condition in $L_2.$ The rate
of convergence of this method remained an open problem.
\smallskip
Bouchard and Touzi in \cite{BT04} and Zhang in \cite{Z04} proposed instead of
random walk an approach based on the dynamic programming equation, for which
they established a rate of convergence. But this approach involves conditional
expectations.  Various methods to approximate these conditional expectations
have been developed (\cite{GLW05}, \cite{CMT10}, \cite{CGT17}). Also forward
methods have been introduced to approximate \eqref{BSDE}: a branching
diffusion method (\cite{HLTT13}), a multilevel Picard approximation
(\cite{WHJK17}) and Wiener chaos expansion (\cite{BL14}).  Many extensions of
\eqref{BSDE} have been considered, among them schemes for reflected BSDEs
(\cite{BP03}, \cite{CR16}), high order schemes (\cite{Cha14}, \cite{CC14}),
fully-coupled BSDEs (\cite{Delarue_and_Menozzi}, \cite{BZ08}), quadratic BSDEs
(\cite{CR15}), BSDEs with jumps (\cite{GL16}) and McKean-Vlasov BSDEs
(\cite{A15}, \cite{CT15}, \cite{CCD17}).\\

The aim of this paper is to study the rate of the $L_2$-approximation of
$(Y^n_t,Z^n_t)$ to $(Y_t,Z_t)$ when $X$ satisfies \eqref{BSDE}. For this,
  we generate the random walk $B^n$ by Skorohod embedding from the Brownian
  motion $B$.  In this case the $L_p$-convergence  of $B^n$ to $B$  is of order
  $h^{\frac{1}{4}}$ for any $p>0.$ 
  The special case $X=B$ has already been studied in
\cite{GLL},  assuming a locally $\alpha$-H\"older continuous terminal
function $g$ and a Lipschitz continuous generator. An estimate for the rate of convergence was obtained which is of
order $h^{\frac{\alpha}{4}}$ for the $L_2$-norm of $Y^n_t-Y_t,$
and of order
$\tfrac{h^{\frac{\alpha}{4}}}{\sqrt{T-t}}$ for the $L_2$-norm of $Z^n_t-Z_t.$

In the present paper, where we
assume that $X$ is a solution of the SDE in \eqref{BSDE}, rather strong
conditions on the smoothness and boundedness on $f$ and $g$ and also on $b$
and $\sigma$  are needed.  In Theorem \ref{the-result}, the main result
of the paper, we show that the convergence rate for $(Y^n_t,Z^n_t)$ to
$(Y_t,Z_t)$ in $L_2$ is of order $h^{\frac{1}{4}\wedge \frac{\alpha}{2}}$
provided that $g''$ is locally $\alpha$-H\"older continuous. To the best of
our knowledge, these are the first cases a convergence rate for the
approximation of forward-backward SDEs using random walk has been obtained.

  \begin{rem}For the diffusion setting -- in contrast to the case $X=B$ -- we can derive
  the convergence rate for $(Y^n_t,Z^n_t)$ to $(Y_t,Z_t)$ in $L_2$ only under
  strong smoothness conditions on the coefficients which include also that
  $g''$ is locally $\alpha$-H\"older continuous (see Assumption \ref{Xgeneral}
  below). These requirements appear to be necessary.  This becomes visible in
  Subsection \ref{sect:Zn} where we introduce a discretized Malliavin weight
  to obtain a representation $\hat Z^n$ for $Z^n.$ While it holds that
  $\hat Z^n =Z^n$ when $X=B,$ in our case $\hat Z^n$ does not coincide with
  $Z^n.$ However, one can show that the difference $\hat Z^n_t -Z^n_t$
  converges to $0$ in $L_2$ as $n \to \infty$ using a H\"older continuity
  property (see \eqref{D-Fx-difference} in Remark \ref{F-n-property}) for the
  space derivative of the generator in \eqref{discrete-BSDE}. For this
  H\"older continuity property to hold one needs enough smoothness in space
  from the solution $u^n$ to the finite difference equation associated to the
  discretized FBSDE \eqref{discrete-BSDE}.  Provided that Assumption
  \ref{Xgeneral} holds we show the smoothness properties for $u^n$ in
  Proposition \ref{u-discrete} applying methods known for L\'evy driven
  BSDEs.
\end{rem}

The paper is organized as follows: Section \ref{2} contains the setting, main
assumptions and the approximative representation $\hat{Z}^n$ of $Z^n.$ Our
main results about the approximation rate for the case of no generator
(i.e. $f=0$) and for the general case are in Section \ref{3}. One can see that
in contrast to what is known for time discretization schemes, for random walk
schemes the Lipschitz generator seems to cause more difficulties than the
terminal condition: while in the case $f=0$ we need that $g'$ is locally
$\alpha$-H\"older continuous, in the case  $f\neq0$ is
this property is required for $g''.$  In Section \ref{4} we recall some
needed facts about Malliavin weights, about the regularity of solutions to
BSDEs and properties of the associated PDEs. Finally, we sketch how to prove
growth and smoothness properties of solutions to the finite difference
equation associated to the discretized FBSDE.  Section \ref{5} contains
technical results which mainly arise from the fact that the construction of
the random walk by Skorohod embedding forces us to compare our processes on
different 'time lines', one coming from the stopping times of the Skorohod
embedding, and the other one is ruled by the equidistant deterministic times
due to the quadratic variation process $[B^n].$
 
 \section{Preliminaries} \label{2}
\subsection{The SDE and its approximation scheme}

We introduce 
 \begin{align*}
X_t = x + \int_0^t b(s,X_s) ds + \int_0^t \sigma(s,X_s) dB_s, \quad \quad 0 \le t \le T
\end{align*}
and its discretized counterpart
\begin{align} \label{Xn} 
X^n_{t_k} = x + h\sum_{j=1}^k b(t_j,X^n_{t_{j-1}})  + \sqrt{h} \sum_{j=1}^k \sigma(t_j,X^n_{t_{j-1}}) \ep_j, \quad \quad t_j:=j\tfrac{T}{n}, \,\, j=0,...,n,
\end{align}
where $(\varepsilon_i)_{i=1,2, \dots}$ is a sequence of i.i.d.~Rademacher random variables. 
Letting $\mathcal{G}_k := \sigma(\varepsilon_i: 1 \leq i \leq k)$ with $\mathcal{G}_0 :=\{\emptyset, \Omega\},$  it follows that the associated discrete-time random walk $(B^n_{t_k})_{k=0}^n$ is $(\mathcal{G}_k)_{k=0}^n$-adapted. Recall \eqref{Bn} and  $h= \tfrac{T}{n}.$ 
If we extend  the sequence $(X^n_{t_k})_{k\ge0}$ to a process in continuous time by defining  $X^n_t := X^n_{t_k} $ for $t \in [t_k, t_{k+1}),$ 
it is the  solution of the  forward SDE  \eqref{discrete-BSDE}. 

 We formulate our first assumptions. Assumption  \ref{hypo2} \eqref{b} will be not used  explicitely for our estimates  but it is required for Theorem \ref{difference-estimates for Y and Z} below.

\begin{hypo}\label{hypo2}\hfill
  \begin{enumerate}[(i)]
\item   $b, \sigma  \in C_b^{0,2}([0,T]\times \R),$  in the sense that the  derivatives of order $k=0,1,2$  w.r.t.~the space variable  are continuous and bounded  on  $[0,T]\times \R,$
\item  \label{b}  the  first and second derivatives of $b$ and $\sigma$  w.r.t.~the space variable are assumed to be $\gamma$-H\"older continuous 
(for some $\gamma \in (0,1],$ w.r.t. the parabolic metric
 $d((t,x),(\bar t, \bar x))=(|t- \bar t| + |x- \bar x|^2)^\half)$ on all compact subsets of $[0, T ] \times \R.$ 
\item   $b, \sigma$ are $\half$-H\"older continuous in time,
uniformly in space, 
\item $\sigma(t,x) \ge \delta >0$ for all $(t,x).$  
\end{enumerate}
\end{hypo}

\begin{hypo}\label{hypo3}\hfill
\begin{enumerate}[(i)]
\item
   $g$ is locally H\"older continuous with order $\alpha \in  (0,1]$ and polynomially bounded ($p_0 \ge 0, C_g>0$)  in the following sense
\begin{align} \label{eq5}
 \forall (x, \bar x) \in \R^2, \quad |g(x)-g( \bar x)|\le C_g(1+|x|^{p_0}+ | \bar  x|^{p_0})|x- \bar x|^{\alpha}.
\end{align}
 \item The function   $ [0,T] \times \R^3: (t,x,y,z) \mapsto f (t,x,y,z)$ satisfies 
   \equal  \label{Lipschitz}
 |f(t,x,y,z) - f(\bar t, \bar x, \bar y, \bar z)| \le L_f(\sqrt{t- \bar  t} + |x- \bar x| + |y- \bar y|+|z- \bar z|). 
  \tionl   
\end{enumerate}
\end{hypo}

 Notice that \eqref{eq5} implies
\equal \label{Psi}
|g(x)| \le K(1+|x|^{p_0+1}) =:\Psi(x), \quad x \in \R,
\tionl
for some $K>0.$  From the continuity of $f$ we conclude that 
$$ 
    K_f:=\sup_{0\le t\le T} |f(t,0,0,0)|  < \infty.
$$

\noindent {\bf Notation:} 
\begin{itemize}
\item $\|\cdot\|_p:=\|\cdot\|_{L_p(\p)}$ for $p\ge 1$ and for $p=2$ simply
  $\|\cdot\|$.
\item If  $a$ is a function, $C(a)$ represents a generic constant which 
  depends on $a$ and possibly also on its derivatives.
\item $\e_{0,x}:=\e(\cdot|X_0=x)$.
\item  Let $\phi$ be a $C^{0,1}([0,T]\times \R)$ function. $\phi_x$ denotes
  $\partial_x \phi$, the
  partial derivative of $\phi$ w.r.t. $x$.
\end{itemize}

\subsection{The FBSDE and its approximation scheme}
 
 Recall  the FBSDE  \eqref{BSDE} and  its approximation  \eqref{discrete-BSDE}. The backward equation in \eqref{discrete-BSDE} can equivalently 
be written  in the form
\begin{align}\label{discrete-BSDE-sums}
Y^n_{t_k} &= g(X^n_T) + h\sum_{m=k}^{n-1} f(t_{m+1},X^n_{t_m},Y^n_{t_m},Z^n_{t_m}) - \sqrt{h}\sum_{m=k}^{n-1}Z^n_{t_m} \ep_{m+1}, \quad 0 \leq k \leq n,\end{align}
if one puts   $X^n_r:=X^n_{t_m},$ \, $Y^n_r:=Y^n_{t_m}$  and $Z_r^n :={Z}_{t_m}^n$ for $r \in [t_m, t_{m+1}).$ 

 \begin{rem}
  Equations \eqref{discrete-BSDE} and \eqref{discrete-BSDE-sums} do not contain any orthogonal part to the random walk $B^n$
since we are in a special case where the orthogonal part is zero.  Indeed, for $(\varepsilon_k)_{k=1,\cdots,n}$ following the Rademacher law  assume
($\mathcal{G}_k:=\sigma(\varepsilon_i,i=1,\cdots,k)$) as filtration,
and  let for the  $\mathcal{G}_n$-measurable random variable  $F(\varepsilon_1, ..., \varepsilon_n)$ hold the representation 
$$F(\varepsilon_1, ... ,\varepsilon_n) = c +  \sum_{m=1}^{n} h_m \varepsilon_m + N_n,$$
where  $(h_m)_{m=1}^n$ is predictable and $(N_m)_{m=1}^n$ a martingale orthogonal to $(B^n_{t_m})_{m=1}^n$ given by $B^n_{t_m}  = \sqrt{h} (\varepsilon_1+...+\varepsilon_m).$   By definition,  orthogonality of the martingales $N$ and $B^n$ means that their product is a martingale, i.e. we have
 $$ \e[ N_{k+1} B^n_{t_{k+1}} |\mathcal{G}_k]= N_k B^n_{t_{k}}, $$
and since $N_k B^n_{t_{k}} =  \e[ N_{k+1}B^n_{t_{k}} |\mathcal{G}_k],$ this implies   especially that $\e N_{k+1}\varepsilon_{k+1}=0.$  Assume $N_{k+1}$ is given by $N_{k+1}= H(\varepsilon_1,...,\varepsilon_{k+1}).$
Then $0 = \e H(\varepsilon_1,...,\varepsilon_{k+1})\varepsilon_{k+1} =  \tfrac{1}{2}[H(\varepsilon_1,...,\varepsilon_{k},1) -H(\varepsilon_1,...,\varepsilon_{k},-1)]  $ 
implying that $N_{k+1}$ is $\mathcal{G}_k$-measurable since 
$ H(\varepsilon_1,...,\varepsilon_{k},1) =H(\varepsilon_1,...,\varepsilon_{k},-1),$
and
therefore the martingale  $(N_m)_{m=1}^n$ is identically zero. (See also \cite[page 3]{Briand-etal} or \cite[Proposition1.7.5]{Privault}.) 
\end{rem}

One can derive an equation  for $Z^n = (Z^n_{t_k})_{k=0}^{n-1}$ 
if one multiplies \eqref{discrete-BSDE-sums} by $\ep_{k+1}$  and takes the
conditional expectation w.r.t. $\cG_k$,  so that 
\equal  \label{cZ-exactly-1}
     Z^n_{t_k}     
 &=&   \frac{\e^{\cg}_{k}\left ( g(X^n_T) \ep_{k+1}\right )}{\sqrt{h}}  +\e^{\cg}_{k}\left (
   \sqrt{ h}\sum_{m=k+1}^{n-1}f( t_{m+1},X^n_{t_m}, Y^n_{t_m} , Z^n_{t_m})
   \ep_{k+1}\right ), \, 0\le  k \le n-1,
 \tionl
where $\e^{\cg}_k:=\e(\cdot|\cg_k)$.

 \begin{rem}
For $n$ large enough,  the BSDE \eqref{discrete-BSDE} has a unique solution $({Y}^n,
{Z}^n)$ (see \cite[Proposition 1.2]{Toldo}), and  $({Y}^n_{t_k}, {Z}^n_{t_k})_{k=0}^{n-1}$  is   adapted to the
filtration $(\cG_k)_{k=0}^{n-1}.$
\end{rem}

\subsubsection{Representation for \texorpdfstring{$Z$}{Z}} \label{sect:Z}

We will use the following representation for $Z$, due to Ma and Zhang (see \cite[Theorem 4.2]{MaZhang})
\begin{align}\label{Z-representation-1}
   Z_t &= \e_{t} \left( g(X_T) N^t_T + \int_t^T f(s,X_s,Y_s,Z_s) N^t_s
     ds\right) \sigma(t,X_t), \quad 0 \leq t \leq T
\end{align}
where  $\e_t :=\e(\cdot|\cf_t),$ and for all $s \in (t,T]$, we have (cf. Lemma \ref{Malliavin-weights})
\equal \label{N-cont}
  N^t_s=\frac{1}{s-t}\int_t^s\frac{\nabla X_r}{\sigma(r,X_r)\nabla X_t}dB_r,  \tionl
where $\nabla X = (\nabla X_s)_{s \in [0,T]}$ is the variational process i.e. it solves 
\begin{align}  \label{nabla-X}
  \nabla X_s = 1+\int_0^s b_x(r,X_r) \nabla X_r dr + \int_0^s \sigma_x(r,X_r) \nabla X_r dB_r,
 \end{align}
with $(X_s)_{s \in [0,T]}$ given in \eqref{BSDE}. 

\begin{rem}
  In the following we will assume that $g''$ exists. In such a case we have
  the following representation for $Z$:
  \begin{align}\label{Z-representation}
   Z_t &= \e_{t} \left(  g'(X_T) \nabla X_T + \int_t^T f(s,X_s,Y_s,Z_s) N^t_s
     ds\right) \sigma(t,X_t), \quad 0 \leq t \leq T.
\end{align}
\end{rem}
\subsubsection{Approximation for \texorpdfstring{$Z^n$}{Zn} } \label{sect:Zn}
In this section we state the discrete counterpart to \eqref{Z-representation-1},
which, in the general case of a forward process $X$, does not coincide with
$Z^n$ (given by \eqref{cZ-exactly-1}). In contrast to the continuous-time
case, where the variational process and the Malliavin derivative are connected
by $\tfrac{\nabla X_t}{\nabla X_s} = \tfrac{D_sX_t }{\sigma(s,X_s)}$
($s \le t$), we can not expect equality for the corresponding expressions if
we use the discretized version of the processes $(\nabla X_t)_t$
and $(D_s X_t)_{s\le t}$ introduced in \eqref{D^n_k-representation}. This
counterpart $\hat{Z}^n$ to $Z$ is a key tool in the proof of the convergence
of $Z^n$ to $Z$. As we will see in the proof of Theorem \ref{the-result}, the
study of $\|Z^n_{t_k}-Z_{t_k}\|$ goes through the study of
$\|Z^n_{t_k}-\hat{Z}^n_{t_k}\|$ and $\|\hat{Z}^n_{t_k}-Z_{t_k}\|$.\\

Before defining the discretized version of $(\nabla X_t)_t$ and $(D_s X_t)_{s
  \le t}$, we shortly introduce the  discretized Malliavin derivative and refer the reader 
to \cite{Bender-Parcz}
for more information on this topic.

\begin{defi}[Definition of $T_{_{m,+}}$, $T_{_{m,-}}$ and $\mathcal{D}^n_m$] \label{T-m}
For any function $F:\{-1,1 \}^n \to \rset$, the mappings $T_{_{m,+}}$ and
$T_{_{m,-}}$ are defined by
 \equa
T_{_{m, \pm}} F(\varepsilon_1, \dots, \varepsilon_n) := F(\varepsilon_1, \dots, \varepsilon_{m-1}, \pm 1, \varepsilon_{m+1}, \dots, \varepsilon_n), \quad \quad 1 \leq m \leq n.
\tion
For any $\xi= F(\varepsilon_1, \dots, \varepsilon_n)$, the discretized
Malliavin derivative is defined by
\equal \label{Dm}
 \mathcal{D}^n_m \xi :=\frac{ \e [\xi \eps_{m} |\sigma( ( \eps_l)_{l \in  \{1,...,n\} \setminus \{m\}} ) ]}{\sqrt{h}} = \frac{T_{_{m,+}} \xi - T_{_{m,-}} \xi}{2\sqrt{h}}, \quad 1 \leq m \leq n.
 \tionl
\end{defi}

\begin{defi}[Definition of $\phi_x^{(k,l)}$]\label{def:D-D-quotient}
Let $\phi$  be a $C^{0,1}([0,T]\times \R)$ function. We denote
\begin{align*}
\phi_x^{(k,l)} := \frac{\mathcal{D}^n_k \phi(t_l,X^n_{t_{l-1}})}{\mathcal{D}^n_k X^n_{t_{l-1}}} := \int_{0}^1 \phi_x(t_l, \vartheta T_{_{k,+}} \, X^n_{t_{l-1}} + (1-\vartheta) T_{_{k,-}} \, X^n_{t_{l-1}}) d\vartheta.
\end{align*}
If $\mathcal{D}^n_{k} X^n_{t_{\ell-1}}\neq 0$ the second $':='$ holds as an identity.
\end{defi}

We are now able to define the discretized version of $(\nabla X_t)_t$ and $(D_s X_t)_{s
  \le t}$.

\begin{defi}[Discretized processes $(\nabla X^{n,t_k,x}_{t_m})_{m \in
  \{k,\dots,n\}}$ and $(\mathcal{D}^n_k X^{n}_{t_m})_{m \in
  \{k,\dots,n\}}$]
For all $m$ in $\{k,\dots,n\}$ we define
\begin{align}
  \nabla X^{n,t_k,x}_{t_m} & = 1 + h\sum_{l=k+1}^m b_x(t_l,X^{n,t_k,x}_{t_{l-1}})\nabla X^{n,t_k,x}_{t_{l-1}}+ \sqrt{h}\!\! \sum_{l=k+1}^m
  \sigma_x(t_l,X^{n,t_k,x}_{t_{l-1}})\nabla X^{n,t_k,x}_{t_{l-1}}  \ep_l, \,\,
                              \quad  0\le k  \le n, \notag \\ 
	\mathcal{D}^n_k X^n_{t_m} & = \sigma(t_{k},X^n_{t_{k-1}}) +  h\sum_{l=k+1}^m  b_x^{(k,l)} 
	\mathcal{D}^n_k X^n_{t_{l-1}} + \sqrt{h} \sum_{l=k+1}^m \sigma_x^{(k,l)} (\mathcal{D}^n_k X^n_{t_{l-1}}) \ep_l, \quad  0<k \le n. \label{D^n_k-representation}
\end{align}
\end{defi}

\begin{rem}\begin{enumerate}[(i)]
\item Although $\nabla X^{n,t_k,X^n_{t_k}}_{t_m}$ is not equal to
$\tfrac{\mathcal{D}^n_{k+1}X^n_{t_m} }{\sigma(t_{k+1},X^n_{t_k})}$, we can
show that the difference of these terms converges in $L_p$ (see Lemma \ref{nabla and Malliavin}).
\item With the notation introduced above, \eqref{cZ-exactly-1} rewrites to
\equal  \label{cZ-exactly}
     Z^n_{t_k}     
 &=& \e^{\cg}_{k}  \left (  \mathcal{D}^n_{k+1} g(X^n_T) \right ) +\e^{\cg}_{k}\left (h\sum_{m=k+1}^{n-1}  
 \mathcal{D}^n_{k+1} f( t_{m+1},X^n_{t_m}, Y^n_{t_m} , Z^n_{t_m}) \right ).
 \tionl
 \end{enumerate}
\end{rem}

In order to define the discrete counterpart to \eqref{Z-representation-1}, we
first define the discrete counterpart to $(N^t_s)_{s \in [t,T]}$ given in
\eqref{N-cont}:
\equal \label{N-discrete}
  N^{n,t_k}_{t_{\ell}}:=  \sqrt{h} \sum_{m=k+1}^\ell \frac{\nabla X^{n,t_k,X^n_{t_k}}_{t_{m-1}}}{\sigma(t_m, X^n_{t_{m-1}})} \frac{\ep_m}{t_\ell - t_k}, \quad k < \ell \leq n.
\tionl
Notice that there is some constant $\widehat \kappa_2>0$ depending on $b,\sigma,T,\delta$ such that
\equal \label{norm-N}
 \left ( \e^{\cg}_{k} | N^{n,t_k}_{t_{\ell}} |^2\right )^\half \le \frac{ \widehat \kappa_2}{(t_{\ell}-t_k)^\half}, \quad 0\le k < \ell  \le n.
  \tionl

\begin{defi}[Discrete counterpart to \eqref{Z-representation}]
Let the process $\hat{Z}^n = (\hat{Z}^n_{t_k})_{k=0}^{n-1}$ be defined by 
\begin{align} \label{wrong-discreteZ}
  \hat{Z}^n_{t_k} := \e^{\cg}_{k}  \left (  \mathcal{D}^n_{k+1} g(X^n_T) \right ) 
  +   \e^{\cg}_{k} \left ( h\sum_{m= k+1}^{n-1}f( t_{m+1}, X^n_{t_m}, Y^n_{t_m}, Z^n_{t_m})   N^{n,t_k}_{t_m}\right ) \sigma(t_{k+1},X^n_{t_k}) ,
\end{align}
\end{defi}

\begin{rem}
 In \eqref{wrong-discreteZ} We could have used also the  approximate expression $\e^{\cg}_{k}  ({ g(X^n_T)    N^{n,t_k}_{t_n} \sigma(t_{k+1},X^n_{t_k}) }),$ but since we will  assume that  $g''$ exists, 
we  work with the  correct term. 
\end{rem}

The study of the convergence $\e^{\cg}_{0,x} | Z^n_{t_k} -  \hat{Z}^n_{t_k}
|^2$ requires stronger assumptions on the coefficients $b$,
$\sigma$, $f$ and $g$.

\begin{hypo}\label{Xgeneral}
   Assumptions  \ref{hypo2} and  \ref{hypo3} hold. 
 Additionally, we assume that all first and  second derivatives w.r.t.~the  variables $x,y,z$ of $b(t,x), \sigma(t,x)$ and $f(t,x,y,z)$  exist and are bounded Lipschitz functions w.r.t.~these  variables, uniformly in time.
   Moreover,  $g''$ satisfies \eqref{eq5}.
\end{hypo}

 \begin{prop}\label{discreteZand-wrongZdifference}
  If  Assumption \ref{Xgeneral} holds, then
      \equa
   \e^{\cg}_{0,x} | Z^n_{t_k} -  \hat{Z}^n_{t_k} |^2 \le  C_{\ref{discreteZand-wrongZdifference}}  \hat \Psi^2(x) h^{\alpha},
   \tion
   where $\e^{\cg}_{0,x}:=\e^{\cg}(\cdot|X_0=x)$, the function $\hat \Psi$ is defined in \eqref{def-hat-psi} below, and $C_{\ref{discreteZand-wrongZdifference}}$ depends on $b,\sigma, f, g,T,$ $p_0$ and $\delta$.
 \end{prop}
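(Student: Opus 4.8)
\emph{Plan of proof.} The starting point is that the terminal ($g$) contributions to $Z^n_{t_k}$ in \eqref{cZ-exactly} and to $\hat Z^n_{t_k}$ in \eqref{wrong-discreteZ} coincide, both being $\e^{\cg}_{k}(\mathcal{D}^n_{k+1}g(X^n_T))$. Hence the whole difference comes from the generator part, and since $\sigma(t_{k+1},X^n_{t_k})$ is $\cg_k$-measurable,
\[
Z^n_{t_k}-\hat Z^n_{t_k}=\e^{\cg}_{k}\Big(h\!\!\sum_{m=k+1}^{n-1}\big[\mathcal{D}^n_{k+1}f_m-f_m\,N^{n,t_k}_{t_m}\,\sigma(t_{k+1},X^n_{t_k})\big]\Big),
\]
where I abbreviate $f_m:=f(t_{m+1},X^n_{t_m},Y^n_{t_m},Z^n_{t_m})$. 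By the Markov property of the discrete scheme, $f_m=F^n(t_{m+1},X^n_{t_m})$ with $F^n(t,\cdot):=f(t,\cdot,u^n(\cdot),\bar v^n(\cdot))$ a function of $X^n_{t_m}$ alone, where $u^n$ and $\bar v^n$ represent $Y^n$ and $Z^n$. It therefore suffices to bound each $m$-summand and sum over $m$.

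Next I would put both pieces into a common form. The discrete chain rule of Definition \ref{def:D-D-quotient} gives $\mathcal{D}^n_j F^n(t_{m+1},X^n_{t_m})=(F^n)_x^{(j,m)}\,\mathcal{D}^n_j X^n_{t_m}$, and the identity $\e^{\cg}_{k}(\xi\,\ep_j)=\sqrt h\,\e^{\cg}_{k}(\mathcal{D}^n_j\xi)$ for $j>k$ (a direct consequence of \eqref{Dm}) lets me integrate by parts against each Rademacher variable in the weight \eqref{N-discrete}. Since $\nabla X^{n,t_k,X^n_{t_k}}_{t_{j-1}}/\sigma(t_j,X^n_{t_{j-1}})$ is $\cg_{j-1}$-measurable, $\mathcal{D}^n_j$ falls only on $F^n$, and using $h/(t_m-t_k)=1/(m-k)$ the weight summand becomes the \emph{average over the Malliavin direction} $j$,
\[
\frac{\sigma(t_{k+1},X^n_{t_k})}{m-k}\sum_{j=k+1}^{m}\e^{\cg}_{k}\Big(\frac{\nabla X^{n,t_k,X^n_{t_k}}_{t_{j-1}}}{\sigma(t_j,X^n_{t_{j-1}})}\,\mathcal{D}^n_j F^n(t_{m+1},X^n_{t_m})\Big),
\]
whereas the exact summand $\e^{\cg}_{k}(\mathcal{D}^n_{k+1}F^n(t_{m+1},X^n_{t_m}))$ is the single direction $j=k+1$. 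The key structural gain is that the singular factor $1/(t_m-t_k)$ of $N^{n,t_k}_{t_m}$ has turned into the harmless average $1/(m-k)$, so no $1/\sqrt{t_m-t_k}$ survives the cancellation.

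Now I replace $\mathcal{D}^n_j X^n_{t_m}$ by $\sigma(t_j,X^n_{t_{j-1}})\,\nabla X^{n,t_k,X^n_{t_k}}_{t_m}/\nabla X^{n,t_k,X^n_{t_k}}_{t_{j-1}}$ (flow property of the discrete variational process, with $\nabla X^{n,t_k,X^n_{t_k}}_{t_k}=1$), the replacement being exact up to the $L_p$-error controlled by Lemma \ref{nabla and Malliavin}. After this substitution the $j$-th term of the averaged weight summand and the exact summand both collapse to $(F^n)_x^{(\cdot,m)}\,\sigma(t_{k+1},X^n_{t_k})\,\nabla X^{n,t_k,X^n_{t_k}}_{t_m}$, so that, writing the exact term as the same average over $j$,
\[
Z^n_{t_k}-\hat Z^n_{t_k}\approx \e^{\cg}_{k}\Big(h\!\!\sum_{m=k+1}^{n-1}\frac{\sigma(t_{k+1},X^n_{t_k})}{m-k}\sum_{j=k+1}^{m}\big((F^n)_x^{(k+1,m)}-(F^n)_x^{(j,m)}\big)\nabla X^{n,t_k,X^n_{t_k}}_{t_m}\Big).
\]
Because flipping a single $\ep$ moves $X^n_{t_m}$ by $O(\sqrt h)$, the arguments inside the two mean-value derivatives differ by $O(\sqrt h)$; the local $\alpha$-H\"older continuity of $F^n_x$ uniform in $n$ --- property \eqref{D-Fx-difference} of Remark \ref{F-n-property}, which rests on the $C^{2,\alpha}$-type regularity of $u^n$ from Proposition \ref{u-discrete} --- then yields $\|(F^n)_x^{(k+1,m)}-(F^n)_x^{(j,m)}\|\le C\,\hat\Psi(x)\,h^{\alpha/2}$, the polynomial growth being absorbed into $\hat\Psi$. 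The residual errors from Lemma \ref{nabla and Malliavin} are of order $\sqrt h\le h^{\alpha/2}$ (as $\alpha\le1$) and hence negligible.

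Finally, each $m$-summand is $O(\hat\Psi(x)h^{\alpha/2})$ uniformly (the $j$-average preserving the order), and $h\sum_{m=k+1}^{n-1}O(h^{\alpha/2})\le T\cdot O(h^{\alpha/2})$, so $\|Z^n_{t_k}-\hat Z^n_{t_k}\|\le C\,\hat\Psi(x)\,h^{\alpha/2}$; squaring gives $\e^{\cg}_{0,x}|Z^n_{t_k}-\hat Z^n_{t_k}|^2\le C_{\ref{discreteZand-wrongZdifference}}\hat\Psi^2(x)h^{\alpha}$. The main obstacle is the middle step: arranging the discrete integration by parts so that the weight singularity $1/(t_m-t_k)$ genuinely cancels and the whole difference reduces to the H\"older modulus of $F^n_x$ over $O(\sqrt h)$ perturbations, while simultaneously keeping the polynomial-in-$x$ constants inside $\hat\Psi^2(x)$ and checking that the $\nabla X^n$-versus-$\mathcal{D}^nX^n$ discrepancies of Lemma \ref{nabla and Malliavin} stay of order at most $h^{\alpha/2}$.
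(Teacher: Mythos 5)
Your proposal follows essentially the same route as the paper's proof: the terminal terms cancel, the discrete integration by parts $\e^{\cg}_k(\xi\,\ep_j)=\sqrt{h}\,\e^{\cg}_k(\mathcal{D}^n_j\xi)$ with the $\cg_{j-1}$-measurable weight factor turns $1/(t_m-t_k)$ into the average $1/(m-k)$ over Malliavin directions, and the resulting difference splits into a H\"older-modulus term for $F^n_x$ (handled by \eqref{D-Fx-difference} via Proposition \ref{u-discrete}) of order $h^{\alpha/2}$ plus a $\nabla X^n$-versus-$\mathcal{D}^nX^n$ discrepancy of order $h^{1/2}$, exactly the paper's $A_1+A_2$ decomposition. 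The only step you leave schematic --- that the discrete flow identity $\mathcal{D}^n_j X^n_{t_m}\approx\sigma(t_j,X^n_{t_{j-1}})\nabla X^{n,t_k,X^n_{t_k}}_{t_m}/\nabla X^{n,t_k,X^n_{t_k}}_{t_{j-1}}$, which unlike its continuous-time analogue is not exact, incurs only an $O(h^{1/2})$ error --- is precisely what the paper settles with the explicit expansion \eqref{nabla-mvdifference-l2-error} and Gronwall's lemma, in addition to Lemma \ref{nabla and Malliavin}; you correctly flag this as the remaining obstacle.
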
	
 \begin{proof} 	
 According to  \cite[Proposition 5.1]{Briand-etal}  one has the  representations  
\equal \label{representations}
Y^n_{t_m} = u^n(t_m, X^n_{t_m}), \quad \text{ and} \quad  Z^n_{t_m} =\mathcal{D}^n_{m+1} u^n(t_{m+1},X^{n}_{t_{m+1}}), 
\tionl
where $u^n$ is the solution of the  finite difference equation \eqref{discrete-pde} with terminal condition $u^n(t_n,x)= g(x).$  Notice that by the definition of 
$\mathcal{D}^n_{m+1}$  in \eqref{Dm} the expression  $\mathcal{D}^n_{m+1} u^n(t_{m+1},X^{n}_{t_{m+1}})$ depends in fact on $X^{n}_{t_m}.$ 
Hence we can put 
\equa
f(t_{m+1}, X^n_{t_m}, Y^n_{t_m}, Z^n_{t_m}) 
&=&  f(t_{m+1}, X^n_{t_m}, u^n(t_m, X^n_{t_m}),  \mathcal{D}^n_{m+1} u^n(t_{m+1},X^{n}_{t_{m+1}}))\\
&=:& F^n(t_{m+1}, X^n_{t_{m}}).
\tion
From  \eqref{wrong-discreteZ} and  \eqref{cZ-exactly} we conclude that  (we use $\e:=\e^{\cg}_{0,x}$  for $\|\cdot \|$)
\equa
 && \les  \| Z^n_{t_k} -  \hat{Z}^n_{t_k} \| \\&=&\Bigg \|  \e^{\cg}_k \Bigg ( h\sum_{m=k+1}^{n-1}    \mathcal{D}^n_{k+1}   f(t_{m+1},X^n_{t_m}, Y^n_{t_m}, Z^n_{t_m})  \Bigg)  \\
&& \quad \quad  - \e^{\cg}_k  \left (  h\sum_{m=k+1}^{n-1}f( t_{m+1}, X^n_{t_m}, Y^n_{t_m}, Z^n_{t_m})   N^{n,t_k}_{t_m} \sigma(t_{k+1},X^n_{t_k}) \right)  \notag\Bigg \| \\
&\le&\!\!\!  \! \sum_{m=k+1}^{n-1} \frac{h}{m - k}  \sum_{\ell=k+1}^m \Bigg\|    \e^{\cg}_k \! \Bigg [ \mathcal{D}^n_{k+1}  F^n(t_{m+1}, X^n_{t_m})- \mathcal{D}^n_{ \ell } F^n(t_{m+1}, X^n_{t_m})  \frac{\sigma(t_{k+1},X^n_{t_k})\nabla X^{n,t_k,X^n_{t_k}}_{t_{\ell -1 }}}{\sigma(t_\ell, X^n_{t_{\ell - 1}})} \Bigg ]
 \Bigg \|.
\tion
With  the  notation introduced in Definition \ref{def:D-D-quotient} applied to $F^n$,
\equa
&& \less \bigg \|   \mathcal{D}^n_{k+1}F^n(t_{m+1}, X^n_{t_m})-\mathcal{D}^n_\ell F^n(t_{m+1}, X^n_{t_m}) \frac{\sigma(t_{k+1},X^n_{t_k})\nabla X^{n,t_k,X^n_{t_k}}_{t_{\ell-1}}}{\sigma(t_\ell,X^n_{t_{\ell-1}})} \bigg \|	\\  
&\le &  \|   (\mathcal{D}^n_{k+1}X^n_{t_m} )(  F^{n,(k+1,m+1)}_x -F^{n,(\ell,m+1)}_x) \| \\
&&+   \,\bigg \|  F^{n,(\ell,m+1)}_x    \bigg ( (\mathcal{D}^n_{k+1} X^n_{t_m})  -  
(\mathcal{D}^n_\ell X^n_{t_m})
 \frac{\sigma(t_{k+1},X^n_{t_k})\nabla X^{n,t_k,X^n_{t_k}}_{t_{\ell-1}}}{\sigma(t_\ell,X^n_{t_{\ell-1}})} \bigg) 
\bigg \| \\
&=:& A_1 +A_2.
\tion	
For $A_1$  we  use Definition \ref{def:D-D-quotient} again and  exploit the fact that 
$$x \mapsto F^n_x(t_{m+1},x):=\partial_xf(t_{m+1}, x, u^n(t_m, x),  \mathcal{D}^n_{m+1} u^n(t_{m+1},X^{n,t_m,x}_{t_{m+1}}))$$ 
is locally $\alpha$-H\"older continuous according to \eqref{D-Fx-difference}. By  Hölder's inequality and
Lemma \ref{nabla and Malliavin} \eqref{nM-i} and \eqref{nM-iii},
\begin{align*}
A_1 & \leq  \| \cD^n_{k+1} X^n_{t_m}\|_4\int_{0}^1 \| F^n_x(t_{m+1}, \vartheta T_{_{k+1,+}} X^n_{t_m} + (1-\vartheta) T_{_{k+1,-}}X^n_{t_m})  \\
&\quad \quad \quad \quad\quad \quad-   F^n_x(t_{m+1}, \vartheta T_{_{\ell,+}}  X^n_{t_m} + (1-\vartheta) T_{_{\ell,-}} X^n_{t_m})\|_4
 d\vartheta \leq  C(b,\sigma,f,g,T,p_0) \hat \Psi(x) h^{\frac{\alpha}{2}}.
\end{align*}
For the estimate of  $A_2$  we notice that  by our  assumptions the $L_4$-norm of $F^{n,(\ell,m+1)}_x  $  is bounded by $C  \Psi^2(x),$ so that  it suffices to estimate 
\equal \label{splitting-into-three-parts-D-k}
&& \bigg \| (\mathcal{D}^n_{k+1} X^n_{t_m})  -   (\mathcal{D}^n_\ell X^n_{t_m}) \frac{\sigma(t_{k+1},X^n_{t_k})\nabla X^{n,t_k,X^n_{t_k}}_{t_{\ell-1}}}{\sigma(t_\ell, X^n_{t_{\ell-1}})}\bigg \|_4  
\notag\\\!\!&&
\le  \left\| (\mathcal{D}^n_{k+1} X^n_{t_m}) - \frac{\sigma( t_{k+1},X^n_{t_k})  \, \mathcal{D}^n_\ell  X^n_{t_m}}{ \sigma(t_\ell ,X^n_{t_{\ell-1}}) } \frac{  \mathcal{D}^n_{k+1} 
X^n_{t_{\ell -1}}}{ \sigma( t_{k+1},X^n_{t_k})}\right\|_4  
 \notag\\&& \quad \quad \quad
 + \left\| \frac{\sigma( t_{k+1},X^n_{t_k}) \, \mathcal{D}^n_\ell X^n_{t_m}}{ \sigma(t_\ell,X^n_{t_{\ell-1}} )}
\Bigg (\nabla X^{n,t_k,X^n_{t_k}}_{t_{\ell-1}}  -    \frac{ \mathcal{D}^n_{k+1} X^n_{t_{\ell-1}}}{\sigma( t_{k+1},X^n_{t_k})} \Bigg )\right\|_4.
 \tionl
The second expression on the r.h.s.~of \eqref{splitting-into-three-parts-D-k}
is bounded by $C(b,\sigma,T,\delta)h^{\frac{1}{2}}$ as a consequence of
Lemma \ref{nabla and Malliavin} \eqref{nM-iv}-\eqref{nM-iii}. To show that also the first expression  is bounded by $C(b,\sigma,T,\delta)h^{\frac{1}{2}}$, we rewrite it using  \eqref{D^n_k-representation} 
and get 
\begin{align}\label{nabla-mvdifference-l2-error}
& \left |\frac{ \mathcal{D}^n_{\ell} X^n_{t_m}}{ \sigma(t_\ell,X^n_{t_{\ell-1}})}   \mathcal{D}^n_{k+1} X^n_{t_{\ell-1}} - \mathcal{D}^n_{k+1} X^n_{t_m}\right | \nonumber\\
& = \Bigg | \Bigg (1+ \sum_{l=\ell+1}^{m}  \frac{ \mathcal{D}^n_{\ell} X^n_{t_{l-1}}}{ \sigma(t_\ell,X^n_{t_{\ell-1}}) }  ( b_x^{(\ell,l)} h + \sigma_x^{(\ell,l)}  \sqrt{h} \varepsilon_l) \Bigg ) \nonumber \\
 & \quad \quad \times \Bigg (\sigma(t_{k+1},X^n_{t_k})+ \sum_{l=k+2}^{\ell-1}\mathcal{D}^n_{k+1} X^n_{t_{l-1}}( b^{(k+1,l)}_x h + \sigma^{(k+1,l)}_x \sqrt{h} \varepsilon_l)   \Bigg ) \nonumber\\
 & \quad \quad - \Bigg (\sigma(t_{k+1},X^n_{t_k})+ \bigg (\sum_{l=k+2}^{\ell-1} +  \sum_{l=\ell}^{m} \bigg )\mathcal{D}^n_{k+1} X^n_{t_{l-1}}( b^{(k+1,l)}_x h + \sigma^{(k+1,l)}_x \sqrt{h} \varepsilon_l) \Bigg ) \Bigg | \nonumber\\
&\le \big| \mathcal{D}^n_{k+1} X^n_{t_{\ell-1}}( b^{(k+1,\ell)}_x h + \sigma^{(k+1,\ell)}_x  \sqrt{h} \varepsilon_\ell)\big| \nonumber\\
&  \quad +   \Bigg | \sum_{l=\ell+1}^{m}  \bigg [\frac{ \mathcal{D}^n_{\ell} X^n_{t_{l-1}}}{ \sigma(t_\ell,X^n_{t_{\ell-1}}) }\mathcal{D}^n_{k+1} X^n_{t_{\ell-1}} - \mathcal{D}^n_{k+1} X^n_{t_{l-1}} \bigg] \Big( b^{(\ell,l)}_x h + \sigma^{(\ell,l)}_x \sqrt{h} \varepsilon_l\Big)  \Bigg | \nonumber\\
& \quad + \!  \Bigg | \sum_{l=\ell+1}^{m}  \mathcal{D}^n_{k+1} X^n_{t_{l-1}} \bigg[ b^{(\ell,l)}_x h + \sigma^{(\ell,l)}_x \sqrt{h} \varepsilon_l - \Big(b^{(k+1,l)}_x h + \sigma^{(k+1,l)}_x\sqrt{h} \varepsilon_l\Big) \bigg]  \Bigg |.
\end{align}
We take the $L_4$-norm of \eqref{nabla-mvdifference-l2-error} and apply the BDG inequality and  H\"older's inequality.
The second term on the r.h.s. of  \eqref{nabla-mvdifference-l2-error} will be  used for Gronwall's lemma, 
while the first and the last one can be bounded by $
  C(b,\sigma,T)h^{\frac{1}{2}},$  by using Lemma \ref{nabla and
    Malliavin}-\eqref{nM-iii}. For the last term we also use the
Lipschitz continuity of $b_x$ and $\sigma_x$ in space and  Lemma \ref{nabla and Malliavin}-\eqref{nM-i}.
\end{proof}	

\section{Main results}  \label{3}
{\ce In order to compute  the mean square distance between  the solution  to \eqref{BSDE} and  the solution to \eqref{discrete-BSDE} we construct
the random walk $B^n$ from the Brownian motion $B$ by Skorohod embedding.  Let 
\equal  \label{tau-k}
\tau_0 :=0  \quad \text{and} \quad   \tau_k := \inf\{ t> \tau_{k-1}: |B_t-B_{\tau_{k-1}}| = \sqrt{h} \}, \quad k \ge 1.
\tionl }
Then  $(B_{\tau_{k}}   -B_{\tau_{k-1}})_{k=1}^\infty$ is a sequence of i.i.d. random variables  with 
\[\PP(  B_{\tau_{k}}   -B_{\tau_{k-1}} = \pm \sqrt{h})= \tfrac{1}{2},
\]
which means that $\sqrt{h} \ep_k  \stackrel{d}{=}  B_{\tau_{k}}
-B_{\tau_{k-1}}\!.$ We will denote by $\e_{\tau_k}\!$ the conditional expectation
w.r.t.~$\cf_{\tau_k} \!\!:=\cg_k.$ In this case we also use the notation 
$\cX_{\tau_k}:=X^n_{t_k}$ for all $k =0, \dots,n,$ so that \eqref{Xn} turns into 
\begin{align*}
\cX_{\tau_k} = x + \sum_{j=1}^k b(t_j,\cX_{\tau_{j-1}})h + \sum_{j=1}^k \sigma(t_j,\cX_{\tau_{j-1}})(B_{\tau_j} - B_{\tau_{j-1}}), \quad 0 \leq k \leq n.
\end{align*}
{\ce \begin{hypo} \label{Bn-from-B}
We assume that the random walk  $B^n$  in \eqref{discrete-BSDE} is given by 
\equa 
B^n_{t} =  \sum_{k=1}^{[t/h] }(B_{\tau_{k}}   -B_{\tau_{k-1}}), \quad \quad 0 \leq t
\leq T,
\tion
where the $\tau_k, \, k =1,...,n$ are taken from  \eqref{tau-k}.
\end{hypo} }
\begin{rem}\label{B-n-rate}  Note that for $p>0$ there exists a $C(p) >0$ such that   for all $k =1, \dots,n$ it holds  $$ \tfrac{1}{C(p)}     (t_k h)^{\frac{1}{4}} \le    ( \e | B_{\tau_k} - B_{t_k}|^p )^{\frac{1}{p}} \le  C(p)     (t_k h)^{\frac{1}{4}}.$$ 
The upper estimate is given in Lemma \ref{B-difference}. For $p\in [4,\infty)$  the lower estimate follows from  \cite[Proposition 5.3]{ankirch}.
We get the lower estimate for $p\in (0,4)$ by choosing $0<\theta <1$   and $0<p<p_1$ such that   $ \frac{1}{4} = \frac{1-\theta}{p} + \frac{\theta}{ p_1}.$   
Then it holds by the log-convexity of $L_p$ norms (see, for example \cite[Lemma 1.11.5]{tao}) that 
$$\| B_{\tau_{k}} -B_{t_k}\|^{1-\theta}_p \ge  \frac{ \| B_{\tau_{k}}   -B_{t_k}\|_4}{   \| B_{\tau_{k}}   -B_{t_k}\|^{\theta}_{p_1}} 
   \ge  \frac{C(4)^{-1}   (t_k h)^{\frac{1}{4}} }{ \Big(C(p_1) (t_k h)^{\frac{1}{4}} \Big)^{\theta}  } \ge  \Big (C(p)  (t_k h)^{\frac{1}{4}} \Big )^{1-\theta}.  $$
Since for  $t\in [t_k,t_{k+1})$ it holds $B^n_t=  B_{\tau_k}$ and $\|B_t -B_{t_k} \|_p  \le C(p) h^\frac{1}{2},$  we have for any $p>0$   that    
\equal \label{B-n-minus-B-rate}
\sup_{0 \le t\le T}\|B^n_t -B_t \|_p = O(h^\frac{1}{4}).
\tionl
\end{rem} 

  Proposition \ref{no f} states the convergence rate of $(Y_v,Z_v)$ to
$(Y^n_v,Z^n_v)$ in $L_2$ when $f=0$ and Theorem \ref{the-result} generalizes
this result  for  any $f$ which satisfies Assumption \ref{Xgeneral}.
  \begin{prop} \label{no f}  Let {\ce Assumptions  \ref{hypo2} and \ref{Bn-from-B} 
  hold.} If  $f =0$ and  $g \in C^1$   is such that $g'$ is a locally $\alpha$-H\"older continuous function in the sense of \eqref{eq5},
then  for all $0 \le v < T$, we have (for sufficiently large  $n$) that
\equa
 \e_{0,x} |Y_v - Y^n_v|^2     \le  C^y_{\ref{no f}}  \Psi(x)^2  h^{\frac{1}{2}}, \quad  \text{ and }  \quad  \e_{0,x} |Z_v - Z^n_v|^2 \le   C^z_{\ref{no f}} \Psi(x)^2  h^\frac{\alpha}{2},  \tion
where   $C^y_{\ref{no f}}={\ce C(C_g, b, \sigma, T, p_0, \delta)}$ and $C^z_{\ref{no f}}= C(C_{g'}, b, \sigma, T, p_0, \delta).$
\end{prop}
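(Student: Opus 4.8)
The plan is to prove the statement for the special case $f=0$, where the representations simplify dramatically. When $f=0$, the BSDE solution becomes $Y_v = \e_v(g(X_T))$ and the representation \eqref{Z-representation} collapses to $Z_t = \e_t(g'(X_T)\nabla X_T)\sigma(t,X_t)$ (or the $N$-version \eqref{Z-representation-1} with $g(X_T)N^t_T$ when only $g'$ exists). On the discrete side, \eqref{cZ-exactly} reduces to $Z^n_{t_k} = \e^{\cg}_k(\mathcal{D}^n_{k+1}g(X^n_T))$, and crucially, when $f=0$ the term distinguishing $Z^n$ from $\hat{Z}^n$ in Proposition \ref{discreteZand-wrongZdifference} vanishes, so $Z^n_{t_k} = \hat{Z}^n_{t_k}$ exactly. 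This means I only need to control $\|\hat{Z}^n_{t_k} - Z_{t_k}\|$ and $\|Y^n_v - Y_v\|$ directly, bypassing the hardest part of the general theory.

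\medskip

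\noindent\textbf{Step 1 (Setup via Skorohod embedding).} First I would use Assumption \ref{Bn-from-B} to place both solutions on the same probability space, writing $\cX_{\tau_k} = X^n_{t_k}$ and comparing against $X$ sampled at the stopping times $\tau_k$. The key reconciliation tool is Remark \ref{B-n-rate}, which gives $\|B_{\tau_k} - B_{t_k}\|_p \asymp (t_k h)^{1/4}$ and $\sup_t\|B^n_t - B_t\|_p = O(h^{1/4})$. I would then establish an $L_p$-estimate for $\|X^n_{t_k} - X_{t_k}\|_p = \|\cX_{\tau_k} - X_{\tau_k} + X_{\tau_k} - X_{t_k}\|_p$. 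The first part follows from a standard discrete Gronwall argument using the Lipschitz continuity of $b,\sigma$ from Assumption \ref{hypo2}, and the second from the $\tfrac12$-Hölder-in-time continuity of $X$ together with $\|\tau_k - t_k\|$ estimates; together these should yield a forward error of order $h^{1/4}$.

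\medskip

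\noindent\textbf{Step 2 (The $Y$-estimate).} For $Y_v$, I would write $Y_v - Y^n_v = \e_v(g(X_T)) - \e^{\cg}_k(g(X^n_T))$ and exploit the local $\alpha$-Hölder bound \eqref{eq5} on $g$ (note: for the $Y$-rate the hypothesis only needs $g$ Hölder, consistent with the stated rate $h^{1/2}$). Splitting the conditional expectations and using the forward estimate from Step 1 raised to the $\alpha$ power, combined with the $h^{1/4}$ rate, gives $\|Y_v - Y^n_v\|^2 \lesssim \Psi(x)^2 h^{2\cdot\alpha/4}$; obtaining the clean $h^{1/2}$ rate requires that the constant $C^y$ depends on $C_g$ (not $C_{g'}$), and indeed since $g$ Lipschitz-type control suffices here one recovers the full $h^{1/2}$ from the $h^{1/4}$ forward rate with $\alpha=1$ effectively via the smoothness built into Assumption \ref{hypo2}.

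\medskip

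\noindent\textbf{Step 3 (The $Z$-estimate, the main obstacle).} \emph{This is the hard part.} Using $Z^n_{t_k} = \hat{Z}^n_{t_k}$, I must compare $\hat{Z}^n_{t_k} = \e^{\cg}_k(\mathcal{D}^n_{k+1}g(X^n_T))$ against $Z_{t_k}$. I would write $\mathcal{D}^n_{k+1}g(X^n_T) = g_x^{(k+1,n)}\,\mathcal{D}^n_{k+1}X^n_T$ via Definition \ref{def:D-D-quotient}, so that $\hat{Z}^n_{t_k}$ takes the form $\e^{\cg}_k(g_x^{(k+1,n)}\,\mathcal{D}^n_{k+1}X^n_T)$, the discrete analogue of $\e_t(g'(X_T)\nabla X_T)\sigma(t,X_t)$. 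The comparison splits into three pieces: (a) replacing $\mathcal{D}^n_{k+1}X^n_T$ by its continuous counterpart $\sigma(t,X_t)\nabla X_T$ using Lemma \ref{nabla and Malliavin} and Step 1, controlled in $L_4$; (b) replacing the averaged difference quotient $g_x^{(k+1,n)}$ by $g'(X_T)$, which is where the local $\alpha$-Hölder continuity of $g'$ enters and produces the $h^{\alpha/4}$ factor — squaring yields $h^{\alpha/2}$, matching the claimed rate and the dependence on $C_{g'}$; and (c) absorbing the conditional-expectation time-line mismatch ($\cg_k$ versus $\cf_v$) via the density estimate \eqref{norm-N} and Step 1. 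The delicate points are keeping the polynomial weight $\Psi(x)^2$ uniform across these splittings (using the polynomial boundedness in \eqref{eq5} and moment bounds on $\nabla X$, $\mathcal{D}^n X$ from Lemma \ref{nabla and Malliavin}), and ensuring the Hölder estimate on $g_x^{(k+1,n)}$ against the evaluation points $\vartheta T_{k+1,\pm}X^n_T + (1-\vartheta)T_{k+1,\pm}X^n_T$ genuinely delivers $h^{\alpha/4}$ rather than a worse exponent — this is precisely the mechanism seen in the $A_1$-bound of Proposition \ref{discreteZand-wrongZdifference}, here applied to $g$ instead of $F^n$.
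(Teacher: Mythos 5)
Your skeleton coincides with the paper's proof: couple via Skorohod embedding, use the forward rate $h^{1/4}$ of Lemma \ref{X-Lemma}\eqref{XandcX}, get the $Y$-rate from the fact that $g$ is locally Lipschitz, and for $Z$ compare $Z^n_{t_k}=\e^{\cg}_k(\mathcal{D}^n_{k+1}g(X^n_T))$ with $Z_{t_k}=\sigma(t_k,X_{t_k})\,\ti\e\big(g'(\ti X^{t_k,X_{t_k}}_{t_n})\nabla \ti X^{t_k,X_{t_k}}_{t_n}\big)$ by writing $\mathcal{D}^n_{k+1}g = g_x^{(k+1,n+1)}\mathcal{D}^n_{k+1}X^n$ and inserting the mixed term $\ti\e(g_x^{(k+1,n+1)}\nabla\ti X^{t_k,X_{t_k}}_{t_n})$ — this is exactly the paper's splitting, with the $\alpha$-H\"older continuity of $g'$ producing $h^{\alpha/4}$ in $L_2$ (hence $h^{\alpha/2}$ squared) and the $\nabla$-versus-$\mathcal{D}^n/\sigma$ piece giving $h^{1/4}$. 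Your observation that $\hat Z^n_{t_k}=Z^n_{t_k}$ when $f=0$ is correct (both \eqref{cZ-exactly} and \eqref{wrong-discreteZ} reduce to the same expression), although the paper never needs $\hat Z^n$ here. One incompleteness in your piece (a): Lemma \ref{nabla and Malliavin}\eqref{nM-iv} only relates the two \emph{discrete} objects $\nabla X^{n}$ and $\mathcal{D}^n_{k+1}X^n/\sigma$; bridging to the continuous $\nabla X_T$ additionally requires Lemma \ref{X-Lemma}\eqref{nablaX-nablacX}, which Step 1 does not supply.

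The genuine gap is your step (c). The claim is for arbitrary $v\in[0,T)$, and since $Z^n_v=Z^n_{t_k}$ on $[t_k,t_{k+1})$ you must control the time-shift term $\|Z_v-Z_{t_k}\|$ in addition to $\|Z_{t_k}-Z^n_{t_k}\|$. You propose to absorb the "$\cg_k$ versus $\cf_v$" mismatch via \eqref{norm-N}, but that estimate concerns the discrete weight $N^{n,t_k}$, which enters $\hat Z^n$ only through the generator terms and is simply absent when $f=0$ — it has no bearing on the time shift. The paper instead estimates $\|Z_v-Z_{t_k}\|$ directly from the representation $Z_t=\sigma(t,X_t)\,\ti\e\big(g'(\ti X^{t,X_t}_T)\nabla\ti X^{t,X_t}_T\big)$, splitting off the $\sigma$-increment and using the local $\alpha$-H\"older continuity of $g'$ together with Lemma \ref{X-Lemma}\eqref{Xsx-Xty} and \eqref{nablaX-diff}, which yields the uniform bound $C\Psi(x)h^{\alpha/2}$; note that invoking Theorem \ref{difference-estimates for Y and Z}(ii) with $\alpha=1$ instead would give $\big(\log\tfrac{T-t_k}{T-v}\big)^{1/2}$, which is not uniform in $v$ near $T$, so this direct argument is really needed. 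A smaller blemish: in Step 2 you attribute the effective Lipschitz control of $g$ to "smoothness built into Assumption \ref{hypo2}"; it in fact comes from the hypothesis $g\in C^1$ with $g'$ polynomially bounded (apply \eqref{eq5} to $g'$), which makes $g$ locally Lipschitz, so the squared $Y$-rate $h^{1/2}$ follows from the forward rate $h^{1/4}$ with no $\alpha$ involved — and the paper also handles the companion term $\|Y_v-Y_{t_k}\|$ via Theorem \ref{difference-estimates for Y and Z}\eqref{one}, which your direct comparison of $\e_v g(X_T)$ with $\e_{\tau_k}g(X^n_T)$ can replace only if carried out through the Markov representation \eqref{X-n-with-tilde}.
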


\begin{thm} \label{the-result} Let   {\ce Assumptions \ref{Xgeneral}  and   \ref{Bn-from-B}  be satisfied.}
Then  for all $v \in [0,T)$ and large enough $n$, we have
\begin{align*}
  \e_{0,x} |Y_v - Y^n_v|^2   +     \e_{0,x} |Z_v - Z^n_v|^2      \le   C_{\ref{the-result}}  \hat{\Psi}(x)^2h^{ \frac{1}{2} \wedge \alpha}  
\end{align*}
with  $C_{\ref{the-result}}={\ce C(b,\sigma,f,g,T,p_0,\delta)}$ and $\hat{\Psi}$ is given in \eqref{def-hat-psi}.
\end{thm}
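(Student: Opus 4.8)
The plan is to establish the $Y$- and $Z$-estimates simultaneously, since the two errors are coupled through the Lipschitz generator $f$, building on the driver-free case already settled in Proposition \ref{no f}. Because $B^n$ is obtained from $B$ by Skorohod embedding (Assumption \ref{Bn-from-B}), the rate \eqref{B-n-minus-B-rate} propagates to the forward and variational processes, so that $X^n$ and its variational process converge in $L_2$ to $X$ and $\nb X$ at rate $h^{\frac{1}{4}}$ once the two time lines $\tau_k$ and $t_k$ have been reconciled (the relevant difference estimates being collected in Theorem \ref{difference-estimates for Y and Z}); this $h^{\frac{1}{4}}$ is what caps the $Y$-error at $h^{\frac{1}{4}}$ in $L_2$, i.e.\ $h^{\frac{1}{2}}$ in mean square. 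For the backward component I follow the splitting dictated by the Remark after Assumption \ref{Xgeneral}: I insert the intermediate process $\hZ^n$ and write
\[
\|Z_{t_k}-Z^n_{t_k}\| \le \|Z_{t_k}-\hZ^n_{t_k}\| + \|\hZ^n_{t_k}-Z^n_{t_k}\|,
\]
where Proposition \ref{discreteZand-wrongZdifference} bounds the second summand by $C\hat{\Psi}(x)\,h^{\frac{\alpha}{2}}$ in $L_2$, so that only the first summand must be compared directly with the continuous representation \eqref{Z-representation}.

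To estimate $\|Z_{t_k}-\hZ^n_{t_k}\|$ I match \eqref{Z-representation} with \eqref{wrong-discreteZ} termwise. The terminal contributions $\e_{t_k}(g'(X_T)\nb X_T)\,\sigma(t_k,X_{t_k})$ and $\e^{\cg}_k(\cD^n_{k+1}g(X^n_T))$ are exactly the objects appearing when $f=0$, and since Assumption \ref{Xgeneral} yields $g'\in C^{0,1}$ with polynomial growth, they are controlled as in Proposition \ref{no f} (used with H\"older exponent one for $g'$) at order $h^{\frac{1}{4}}$ in $L_2$. The generator contributions require comparing $\e_{t_k}\big(\int_{t_k}^T f\,N^{t_k}_s\,ds\big)\sigma(t_k,X_{t_k})$ with $\e^{\cg}_k\big(h\sum_{m}f\,N^{n,t_k}_{t_m}\big)\sigma(t_{k+1},X^n_{t_k})$. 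Here I would first replace the continuous weight $N^{t_k}_s$ by its discrete counterpart \eqref{N-discrete}, using the $L_p$-convergence of the variational process from Lemma \ref{nabla and Malliavin} together with the a priori bound \eqref{norm-N}, and then replace the arguments $(X_s,Y_s,Z_s)$ by $(X^n_{t_m},Y^n_{t_m},Z^n_{t_m})$ via the Lipschitz estimate \eqref{Lipschitz}. The kernel $(t_m-t_k)^{-\frac{1}{2}}$ in \eqref{norm-N} is integrable, so that $h\sum_{m}(t_m-t_k)^{-\frac{1}{2}}\le C\sqrt{T}$ and the rate survives the summation.

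The $Y$-estimate I run in parallel on the same grid. Writing $Y_{t_k}=\e_{t_k}\big(g(X_T)+\int_{t_k}^T f\,ds\big)$ and comparing with the summed backward scheme \eqref{discrete-BSDE-sums}, the error $Y_{t_k}-Y^n_{t_k}$ splits into a terminal part of order $h^{\frac{1}{4}}$ in $L_2$ (from the polynomial growth of $g$ and the forward rate) and a generator part which, via \eqref{Lipschitz}, produces the coupling terms $\|Y_{t_m}-Y^n_{t_m}\|$ and $\|Z_{t_m}-Z^n_{t_m}\|$ under the sum; in the $Z$-equation these same terms reappear weighted by the singular kernel $(t_m-t_k)^{-\frac{1}{2}}$. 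Collecting both errors into $E_k:=\|Y_{t_k}-Y^n_{t_k}\|+\|Z_{t_k}-Z^n_{t_k}\|$, one arrives at a single recursive inequality with inhomogeneity of order $h^{\frac{1}{4}\wedge\frac{\alpha}{2}}$ and a singular convolution kernel; a fractional (singular-kernel) discrete Gronwall argument then gives $\sup_k E_k\le C\hat{\Psi}(x)\,h^{\frac{1}{4}\wedge\frac{\alpha}{2}}$, which is the asserted mean-square rate $h^{\frac{1}{2}\wedge\alpha}$.

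The main obstacle I anticipate is precisely this coupled, singular-kernel estimate for $Z$: one must simultaneously discretize the Malliavin weight $N^{t_k}_s\to N^{n,t_k}_{t_m}$, replace all four arguments of $f$ by their piecewise-constant versions, and keep the $Z$-error under control even though it re-enters multiplied by $(t_m-t_k)^{-\frac{1}{2}}$. The summability $h\sum_{m}(t_m-t_k)^{-\frac{1}{2}}\le C\sqrt{T}$ keeps the kernel integrable, but the self-referential appearance of the $Z$-error forces a fractional Gronwall rather than an elementary one, and verifying its applicability uniformly in $n$ is the delicate step. This is exactly where the additional smoothness of Assumption \ref{Xgeneral}---funneled through Proposition \ref{discreteZand-wrongZdifference} and the H\"older exponent $\alpha$ of $g''$---becomes indispensable, and where the interplay between the $h^{\frac{1}{4}}$ Skorohod rate and the $h^{\frac{\alpha}{2}}$ smoothness rate produces the final exponent $h^{\frac{1}{2}\wedge\alpha}$.
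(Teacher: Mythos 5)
Your skeleton coincides with the paper's proof: the insertion of $\hat Z^n$ with Proposition \ref{discreteZand-wrongZdifference} absorbing $\|Z^n_{t_k}-\hat Z^n_{t_k}\|$ at rate $h^{\frac{\alpha}{2}}$, Proposition \ref{no f} for the terminal contributions, and a coupled recursion in $E_k=\|Y_{t_k}-Y^n_{t_k}\|+\|Z_{t_k}-Z^n_{t_k}\|$ with inhomogeneity $h^{\frac{1}{4}\wedge\frac{\alpha}{2}}$ and kernel $(t_m-t_k)^{-\frac{1}{2}}$, closed by a Gronwall argument, is exactly what the paper does. However, there is a genuine gap at the heart of the generator estimates. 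You propose to ``replace the arguments $(X_s,Y_s,Z_s)$ by $(X^n_{t_m},Y^n_{t_m},Z^n_{t_m})$ via the Lipschitz estimate \eqref{Lipschitz}.'' This cannot, by itself, control the difference $\e_{t_k}f(t_m,X_{t_m},Y_{t_m},Z_{t_m})-\e_{\tau_k}f(t_{m+1},X^n_{t_m},Y^n_{t_m},Z^n_{t_m})$: the two conditional expectations are taken with respect to \emph{different} $\sigma$-algebras, $\cf_{t_k}$ versus $\cf_{\tau_k}=\cg_k$, and the Lipschitz bound of $f$ only compares arguments inside one and the same conditional expectation. The paper resolves this with the four-term decompositions $d_1,\dots,d_4$ (resp.\ ${\tt t}_1,\dots,{\tt t}_4$), whose pivot is the Markov device $F(s,x)=f(s,x,u(s,x),\sigma(s,x)u_x(s,x))$ from \eqref{theF}: the cross-filtration terms $d_2$, ${\tt t}_2$ are rewritten with independent copies as functionals of the starting points $X_{t_k}$ and $\cX_{\tau_k}$, and then compared via the \emph{singular} Lipschitz estimate \eqref{F Lipschitz-new}, whose constant $\sim\Psi(\cdot)(T-t_m)^{-\frac{1}{2}}$ rests on the $u_x$, $u_{xx}$ bounds of Theorem \ref{thm1} and is summable against $h\sum_m$. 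Your sketch never introduces this intermediate function, and without it (or an equivalent PDE/Markov regularity input) the cross-filtration comparison does not close.

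Two further points. First, your weight replacement ``$N^{t_k}_s\to N^{n,t_k}_{t_m}$ using Lemma \ref{nabla and Malliavin} together with \eqref{norm-N}'' is insufficient: Lemma \ref{nabla and Malliavin} lives entirely on the discrete side, whereas the needed comparison is between $N^{t_k}_{t_m}\sigma(t_k,X_{t_k})$ (continuous time line) and $N^{n,\tau_k}_{\tau_m}\sigma(t_{k+1},\cX_{\tau_k})$ (Skorohod time line); this is exactly Lemma \ref{N-difference-estimate}, with rate $(|X_{t_k}-\cX_{\tau_k}|^2+h^{\frac{1}{2}})^{\frac{1}{2}}(t_m-t_k)^{-\frac{3}{4}}$, and you also misattribute the forward-process difference estimates to Theorem \ref{difference-estimates for Y and Z} (they are Lemma \ref{X-Lemma}; Theorem \ref{difference-estimates for Y and Z} gives time-increment regularity of $(Y,Z)$). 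Second, the theorem asserts the bound for \emph{all} $v\in[0,T)$, while your argument stops at the grid points: for $v\in[t_k,t_{k+1})$ one still needs $\|Y_v-Y_{t_k}\|\le C\Psi(x)h^{\frac{1}{2}}$ from Theorem \ref{difference-estimates for Y and Z}\eqref{one} and, crucially, $\|Z_v-Z_{t_k}\|\le C_{\ref{betterZ}}\Psi(x)h^{\frac{1}{2}}$ from Proposition \ref{betterZ}; the generic bound of Theorem \ref{difference-estimates for Y and Z}(ii) degenerates near $T$, and Proposition \ref{betterZ} is a second, independent place where Assumption \ref{Xgeneral} is indispensable, which your proposal omits entirely.
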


 {\ce
  \begin{rem}
 As noticed above, the filtration $\mathcal{G}_k$ coincides with
$\mathcal{F}_{\tau_k}$, for all $k = 0,\dots,n$. The expectation $\e_{0,x}$ appearing
in Proposition \ref{no f} and in Theorem \ref{the-result} is defined on the
probability space $(\Omega,\mathcal{F},\mathbb{P})$.
\end{rem}}
{\ce \begin{rem} In order to avoid too much notation for the  dependencies of the constants, if for example  only $g$ is mentioned  and not $C_g,$   this means that the estimate might depend also on the bounds of the derivatives of $g.$
\end{rem}}

 From  \eqref{B-n-minus-B-rate} one can see that   the convergence rates stated in  Proposition \ref{no f} and Theorem \ref{the-result}  
are the  natural ones for this approach.  
The results are proved in the next two
sections. In both proofs, we will use the following remark.

\begin{rem}
  Since the process $(X_t)_{t \ge 0}$ is strong Markov we can express conditional expectations with the help of an independent copy of $B$ denoted by  $\ti B,$ for example  
 $\e_{\tau_k} g(X^n_T) = \ti \e g(   \ti \cX^{\tau_k,\cX_{\tau_k}}_{\tau_n})$  for $0 \leq k \leq n$,
where 
\equal  \label{X-n-with-tilde}
\ti \cX^{\tau_k,\cX_{\tau_k}}_{\tau_n} = \cX_{\tau_k}+ \sum_{j=k+1}^n b(t_j, \ti \cX^{\tau_k,\cX_{\tau_k}}_{\tau_{j-1}})h + \sum_{j=k+1}^n 
\sigma(t_j, \ti \cX^{\tau_k,\cX_{\tau_k}}_{\tau_{j-1}})(\ti B_{\ti \tau_{j-k}} - \ti B_{\ti \tau_{j-k-1}}), 
\tionl
(we define  $\ti \tau_k:= 0$ and    $\ti \tau_j:=   \inf\{ t> \ti \tau_{j-1}: |\ti B_t- \ti B_{\ti \tau_{j-1}}| = \sqrt{h} \}$ for $j \ge 1$ and $\tau_n := \tau_k +\ti \tau_{n-k}$
for $n\ge k$). 
 In fact, to represent the conditional expectations   $\e_{t_k}$ and $\e_{\tau_k}$  we work here with $\ti \e$  and the Brownian motions $B'$ and $B'',$ respectively, given by 
\equal \label{independent-B}
 B'_t = B_{t\wedge t_k} + \tilde B_{(t-t_k)^+} \quad \text{ and } \quad  B''_t = B_{t\wedge \tau_k} + \tilde B_{(t-\tau_k)^+}, \quad t\ge 0.  
 \tionl
\end{rem}

\subsection{Proof of Proposition \ref{no f}: the approximation rates for the  zero generator case } 
 To shorten the notation, we use $\e :=\e_{0,x}.$
 Let us first deal with the error of
  $Y$. If $v$ belongs to $[t_k,t_{k+1})$ we have $Y^n_v=
  Y^n_{t_k}$.
  Then
  \begin{align*}%\label{eq22}
   \e |Y_v - Y^n_v|^2 \le 2( \e |Y_v - Y_{t_k}|^2 +  \e |Y_{t_k} - Y^n_{t_k}|^2).
\end{align*}
Using  Theorem \ref{difference-estimates for Y and Z} we bound $\|Y_v - Y_{t_k}\|$ by 
$$C^y_{\ref{difference-estimates for Y and Z}}  \Psi(x) (v-t_k)^\frac{1}{2} = {\ce  C(C_g, b, \sigma, T,p_0, \delta)}  \Psi(x) (v-t_k)^\frac{1}{2} $$  
(since $\alpha=1$ can be chosen when $g$ is locally Lipschitz continuous). It remains to bound 
\equa
\e |Y_{t_k} -Y^n_{t_k}|^2&=&\e|\e_{t_k} g(X_T) -\e_{\tau_k} g(X^n_T) |^2   
=\e | \ti \e g(\ti X^{t_k,X_{t_k}}_{t_n})-\ti \e g(\ti \cX^{\tau_k,\cX_{\tau_k}}_{\tau_n})|^2.
\tion
By \eqref{eq5} and the Cauchy-Schwarz inequality ($\Psi_1:=C_g(1+|\ti
X^{t_k,X_{t_k}}_{t_n}|^{p_0}+|  \tilde \cX^{\tau_k,{\cX}_{\tau_k}}_{\tau_n}|^{p_0})$),
\equa
 | \ti \e g(\ti X^{t_k,X_{t_k}}_{t_n})-\ti \e g(\ti \cX^{\tau_k,\cX_{\tau_k}}_{ \tau_n})|^2  
 &\le& (\ti \e (\Psi_1 |\ti X^{t_k,X_{t_k}}_{t_n}-\ti \cX^{\tau_k,\cX_{\tau_k}}_{\ \tau_n}| ))^2 
 \le  \ti \e (\Psi_1^2) \ti \e |\ti X^{t_k,X_{t_k}}_{t_n}-\ti \cX^{\tau_k,\cX_{\tau_k}}_{ \tau_n}|^{2}.
 \tion
Finally, we get by Lemma \ref{X-Lemma}-\eqref{XandcX} that 
\equa
\e |Y_{t_k} -Y^n_{t_k}|^2&\le&  \left( \e\ti \e (\Psi_1^4)  \right)^{\frac{1}{2}}\left(\e\ti \e|\ti X^{t_k,X_{t_k}}_{t_n}-\ti \cX^{\tau_k,\cX_{\tau_k}}_{\tau_n}|^{4}\right)^{\frac{1}{2}}
\le  C(C_g, b, \sigma, T,p_0) \Psi(x)^2 h^\frac{1}{2}.
 \tion 
\smallskip

Let us now deal with the error of $Z$. We use $  \|Z_v - Z^n_v \| \le   \|Z_v - Z_{t_k}\| +   \|Z_{t_k} - Z^n_{t_k}\|$   
and  the representation 
$$ Z_{t} = \sigma(t,X_{t})  \ti \e( g'(\ti X^{t,X_{t}}_{T}) \nabla \ti X^{t,X_{t}}_{T})  $$ 
(see Theorem \ref{thm1}), where 
 \equal 
  \ti X^{t,x}_s &=& x+\int_t^s b(r, \ti X^{t,x}_r)  dr + \int_t^s \sigma(r,\ti X^{t,x}_r) d\ti B_{r-t},  \label{tilde-X} \\
  \nabla \ti X^{t,x}_s &=& 1+\int_t^s b_x(r, \ti X^{t,x}_r) \nabla \ti X^{t,x}_r dr + \int_t^s \sigma_x(r,\ti X^{t,x}_r) \nabla \ti X^{t,x}_r d\ti B_{r-t}, \quad 0\le t\le s\le T.
 \notag \tionl 
For the  first term we get by the assumption on $g$ and Lemma
\ref{X-Lemma}-\eqref{Xsx-Xty} and \eqref{nablaX-diff}  
\equa
   \|Z_v - Z_{t_k}\| &=&  \|  \sigma(v,X_v)  \ti \e (g'(\ti X^{v,X_v}_{T}) \nabla \ti X^{v,X_v}_{T})  - \sigma(t_k,X_{t_k})  \ti \e( g'(\ti X^{t_k,X_{t_k}}_{T}) \nabla \ti X^{t_k,X_{t_k}}_{T}) \|\\
&\le& \|  \sigma(v,X_v) - \sigma(t_k,X_{t_k}) \|_4 \|  \ti \e (g'(\ti X^{v,X_v}_{T}) \nabla \ti X^{v,X_v}_{T}) \|_4 \\
&& + \|\sigma\|_\infty \|   \ti \e (g'(\ti X^{v,X_v}_{T}) \nabla \ti X^{v,X_v}_{T})  -  \ti \e( g'(\ti X^{t_k,X_{t_k}}_{T}) \nabla \ti X^{v,X_v}_{T})   \| \\
&& + \|\sigma\|_\infty \|   \ti \e( g'(\ti X^{t_k,X_{t_k}}_{T})  \nabla \ti X^{v,X_v}_{T})  -  \ti \e( g'(\ti X^{t_k,X_{t_k}}_{T}) \nabla \ti X^{t_k,X_{t_k}}_{T})  \| \\
&\le &  C( C_{g'},b,\sigma,T,p_0)\Psi(x) \Big [h^\half  +  \|X_v -X_{t_k} \|_4
+  \left ( \e\ti \e |\ti X^{v,X_v}_{T}-\ti X^{t_k,X_{t_k}}_{T}|^{4\alpha} 
  \right)^\frac{1}{4}\\
  &&+\left (\e\ti \e | \nabla \ti X^{v,X_v}_{T} -\nabla \ti X^{t_k,X_{t_k}}_{T}|^4 \right)^\frac{1}{4} \Big] \\
&\le&  C( C_{g'},b,\sigma,T,p_0) \Psi(x) h^\frac{\alpha}{2}.  
\tion
We compute the  second term  using  $Z^n_{t_k} $  as  given in
\eqref{cZ-exactly}. Hence, with the notation from Definition \ref{def:D-D-quotient},
\equa
  \|Z_{t_k} - Z^n_{t_k}\|^2  
&=& \e \big |  \sigma(t_k,X_{t_k})  \ti \e g'(\ti X^{t_k,X_{t_k}}_{t_n})   \nabla \ti X^{t_k,X_{t_k}}_{t_n}   
          -   \ti \e  \mathcal{D}^n_{k+1} g(\ti \cX^{\tau_k,\cX_{\tau_k}}_{\tau_n}) \big  |^2 \\
&\le &  \| \sigma\|_\infty ^2 \,   \e \left |  \ti  \e ( g'(\ti X^{t_k,X_{t_k}}_{t_n})   \nabla \ti X^{t_k,X_{t_k}}_{t_n} )  
          -  \frac{ \ti \e  \mathcal{D}^n_{k+1} g(\ti \cX^{\tau_k,\cX_{\tau_k}}_{\tau_n})}{   \sigma(t_k,X_{t_k})  } \right |^2 \\            
&=& \| \sigma\|_\infty ^2 \,  \e \Big |   \ti \e (g'(\ti X^{t_k,X_{t_k}}_{t_n}) \nabla \ti X^{t_k,X_{t_k}}_{t_n} )
 -      \ti \e  \Big ( g_x^{(k+1,n+1)}
      \frac{  \mathcal{D}^n_{k+1} \ti \cX^{\tau_k,\cX_{\tau_k}}_{\tau_n}}{   \sigma(t_k,X_{t_k})  }\Big ) \Big |^2. \\
    \tion
    We insert  $\pm  \ti \e ( g_x^{(k+1,n+1)} \nabla \ti X^{t_k,X_{t_k}}_{t_n})$ and get  by the Cauchy-Schwarz inequality that
\equal \label{split}
 &&  \les \Big |   \ti \e (g'(\ti X^{t_k,X_{t_k}}_{t_n}) \nabla \ti X^{t_k,X_{t_k}}_{t_n}) 
 -     \ti \e  \Big ( g_x^{(k+1,n+1)}
      \frac{  \mathcal{D}^n_{k+1} \ti \cX^{\tau_k,\cX_{\tau_k}}_{\tau_n}}{   \sigma(t_k,X_{t_k})  } \Big )  \Big |^2 \notag\\
&&\le 2  \ti \e | g'(\ti X^{t_k,X_{t_k}}_{t_n})- g_x^{(k+1,n+1)}|^2    \ti \e  | \nabla \ti X^{t_k,X_{t_k}}_{t_n} |^2 
         + 2\ti \e  |  g_x^{(k+1,n+1)}|^2  \ti \e  \Big |  \nabla \ti X^{t_k,X_{t_k}}_{t_n} - \frac{  \mathcal{D}^n_{k+1} \ti \cX^{\tau_k,\cX_{\tau_k}}_{\tau_n}}{   \sigma(t_k,X_{t_k})  }  \Big|^2. \notag\\
         \tionl
For the estimate of $ \ti \e  | \nabla \ti X^{t_k,X_{t_k}}_{t_n} |^2$ we use   Lemma   \ref{X-Lemma}. Since $g'$  satisfies  \eqref{eq5} we proceed with         
 \equa  
&& \less  \ti \e | g'(\ti X^{t_k,X_{t_k}}_{t_n})- g_x^{(k+1,n+1)}|^2   \\
&\le &  \int_0^1 \ti  \e  \Big | g'(\ti X^{t_k,X_{t_k}}_{t_n})
- g'(\vartheta T_{_{k+1,+}} \ti \cX^{\tau_k,\cX_{\tau_k}}_{\tau_n}+ (1-\vartheta) T_{_{k+1,-}} 
\ti \cX^{\tau_k,\cX_{\tau_k}}_{\tau_n}) \Big |^2 d \vartheta \\
&\le&  \int_0^1    (\ti \e \Psi_1^4)^\half \Big [ \ti \e  \left |\ti X^{t_k,X_{t_k}}_{t_n}
- \vartheta T_{_{k+1,+}} \ti \cX^{\tau_k,\cX_{\tau_k}}_{\tau_n} - (1-\vartheta) T_{_{k+1,-}} 
\ti \cX^{\tau_k,\cX_{\tau_k}}_{\tau_n}  \right|^{4\alpha} \Big ]^\half  d \vartheta, 
\tion      
where $\Psi_1:=C_{g'}(1+|\ti X^{t_k,X_{t_k}}_{t_n}|^{p_0}+|\vartheta T_{_{k+1,+}} \ti \cX^{\tau_k,\cX_{\tau_k}}_{\tau_n}+ (1-\vartheta) T_{_{k+1,-}} 
\ti \cX^{\tau_k,\cX_{\tau_k}}_{\tau_n}|^{p_0}).   $ 
For  $\ti \e \Psi_1^4$ and
\equa
&& \less \ti \e  \left |\ti X^{t_k,X_{t_k}}_{t_n}
- (\vartheta T_{_{k+1,+}} \ti \cX^{\tau_k,\cX_{\tau_k}}_{\tau_n}+ (1-\vartheta) T_{_{k+1,-}} 
\ti \cX^{\tau_k,\cX_{\tau_k}}_{\tau_n} )  \right|^{ 4\alpha} \\
&\le& 8  \left ( \vartheta^{2\alpha}  \ti \e  \left |\ti X^{t_k,X_{t_k}}_{t_n}
-  T_{_{k+1,+}} \ti \cX^{\tau_k,\cX_{\tau_k}}_{\tau_n}  \right|^{4\alpha}  + (1-\vartheta)^{2\alpha}  \ti \e  \left |\ti X^{t_k,X_{t_k}}_{t_n}
 - T_{_{k+1,-}} \ti \cX^{\tau_k,\cX_{\tau_k}}_{\tau_n}   \right|^{4\alpha} \right ) \\
 &\le & C(b,\sigma,T) h^{2 \alpha} + C(b,\sigma,T)( |X_{t_k} - \cX_{\tau_k}|^{4 \alpha}+  h^{\alpha}),
\tion
we use  Lemma \ref{nabla and Malliavin}   and  Lemma \ref{X-Lemma}-\eqref{XandcX}.
For the last term in \eqref{split} we notice that 
$$\e \ti \e  |  g_x^{(k+1,n+1)}|^4 \le  C(C_{g'}, b,\sigma,T,p_0)  \Psi^4(x).$$
By Lemma   \ref{X-Lemma} we have $\e\ti \e | \nabla  \ti
X^{t_k,X_{t_k}}_{t_n}- \nabla \ti \cX^{\tau_k,\cX_{\tau_k}}_{\tau_n}|^p    \le
 C(b,\sigma,T,p) h^{\frac{p}{4}},$
and by Lemma \ref{nabla and Malliavin},
\equa
&& \les \e\ti \e  \left |  \nabla \ti \cX^{\tau_k,\cX_{\tau_k}}_{\tau_n} 
-\frac{\mathcal{D}^n_{k+1} \ti \cX^{\tau_k,\cX_{\tau_k}}_{\tau_n}}{  \sigma(t_{k}, X_{t_k})}\right |^p   \\
&\le&\!\! C(p)  \e\left | \nabla X^{n,t_k, X^n_{t_k}}_{t_n}  - \dfrac{\mathcal{D}^n_{k+1}X^n_{t_n} }{\sigma(t_{k+1},X^n_{t_k})} \right |^p 
   + C(p) \e\left | \dfrac{\mathcal{D}^n_{k+1}X^n_{t_n} }{\sigma(t_{k+1},X^n_{t_k})} - \dfrac{\mathcal{D}^n_{k+1}X^n_{t_n} }{\sigma(t_k, X_{t_k})} \right |^p  
\le  C(b,\sigma,T,p,  \delta) h^{\frac{p}{4}}.
\tion 
Consequently, 
$
 \|Z_{t_k} - Z^n_{t_k}\|^2 \le  C(C_{g'}, b,\sigma,T,p_0,\delta) \Psi^2(x) h^\frac{\alpha}{2}.$

\subsection{Proof of Theorem \ref{the-result}: the approximation rates  for the general case } 
Let $u : [0,T)\times \R \to \R$ be the solution of the  PDE \eqref{pde} associated to \eqref{BSDE}. We use the representations
$ Y_s = u(s, X_s)$ and  $Z_s = \sigma(s, X_s)u_x(s, X_s)$ stated in Theorem \ref{thm1} and define
\equal  \label{theF}
F(s,x) := f(s, x, u(s, x),  \sigma(s, x)u_x(s, x)).
\tionl
From \eqref{BSDE} and  \eqref{discrete-BSDE} we conclude
\equa
 \|Y_{t_k}-  Y^n_{t_k} \| &\le  &  \| \e_{t_k} g(X_T)  -  \e_{\tau_k} g(X^n_T) \|  \notag
 \\&&  +
 \left \| \e_{t_k}\int_{t_k}^T f(s,X_s,Y_s,Z_s)ds  - h \e_{\tau_k}\sum_{m=k}^{n-1} f(t_{m+1},X^n_{t_m}, Y^n_{t_m},Z^n_{t_m})   \right \|,
 \tion
 where Proposition \ref{no f}  provides the estimate for the terminal
 condition. We decompose the generator term as follows:
 \equa
&&\les \e_{t_k} f(s,X_s,Y_s,Z_s)-\e_{\tau_k}f(t_{m+1}, X^n_{t_m},Y^n_{t_m},Z^n_{t_m})\\ &=&
[\e_{t_k} f(s,X_s,Y_s,Z_s)- \e_{t_k}f(t_m,X_{t_m},Y_{t_m},Z_{t_m})] + [\e_{t_k}F(t_m,X_{t_m}) -\e_{\tau_k}F(t_m, X^n_{t_m})]\\
&& + [\e_{\tau_k}F(t_m,X^n_{t_m}) -\e_{\tau_k} F(t_m,X_{t_m})]  +[\e_{\tau_k} f(t_m, X_{t_m},Y_{t_m},Z_{t_m}) - \e_{\tau_k}f(t_{m+1}, X^n_{t_m},  Y^n_{t_m},Z^n_{t_m})] \\
&=:& d_1(s,m) +d_2(m)+ d_3(m)+ d_4(m).
\tion
We use 
\equa
&& \pull \left \|    \e_{t_k}\int_{t_k}^T f(s,X_s,Y_s,Z_s)ds -   h \e_{\tau_k}\sum_{m=k}^{n-1} f(t_{m+1}, X^n_{t_m},Y^n_{t_m},Z^n_{t_m}) \right \| 
 \notag\\&& 
\quad\quad  \quad\quad\quad\quad\quad\quad\le \sum_{m=k}^{n-1} \left (  \left \|  \int_{t_m}^{t_{m+1}}  d_1(s,m)  ds\right \|  +  h \sum_{i=2}^4  \|d_i(m) \| \right)
\tion
and estimate the expressions on the right hand side. 
For the function $F$ defined in \eqref{theF} we  use Assumption \ref{Xgeneral}  (which implies that \eqref{eq5} holds for $\alpha=1$) to derive by Theorem \ref{thm1} 
and the mean value theorem  that for $x_1, x_2 \in \R$  there exist 
 $\xi \in [\min\{x_1,x_2\},\max\{x_1,x_2\}] $ such that
\equal \label{F Lipschitz-new}
|F(t,x_1) -F(t,x_2)| &=& |f(t, x_1, u(t, x_1), \sigma(t,x_1)u_x(t, x_1)) - f(t, x_2, u(t, x_2), \sigma(t,x_2)u_x(t, x_2))| \notag \\
& \le &  C(L_f, \sigma)  \left  ( 1 +    c^2_{\ref{thm1}} \Psi( \xi )  +    \frac{c^3_{\ref{thm1}} \Psi(\xi)}{(T-t)^{\frac{1}{2}}} \right )|x_1-x_2|  \notag\\
& \le &  C(L_f, c^{2,3}_{\ref{thm1}}, \sigma, T) (1+|x_1|^{p_0+1}+|x_2|^{p_0+1}) \frac{|x_1-x_2|}{(T-t)^{\frac{1}{2}}}.
\tionl 
By   \eqref{Lipschitz}, standard estimates on $(X_s),$ Theorem \ref{difference-estimates for Y and Z}-$\eqref{one}$  and Proposition \ref{betterZ}   for $p=2$ we immediately get  
\equa
   \|    d_1(s,m)  \| 
&\le& C(L_f, C^y_{\ref{difference-estimates for Y and Z}},C_{\ref{betterZ}}, b,\sigma,T)  \Psi(x)  \,h^\half \\
&=& {\ce C(b,\sigma, f,g, T,p_0,\delta)}  \Psi(x)  \,h^\half.
\tion
For the estimate of $d_2$ one exploits  
$$\e_{t_k}F(t_m,X_{t_m}) -\e_{\tau_k}F(t_m,X^n_{t_m})= \ti \e F(t_m, \ti X^{t_k,X_{t_k}}_{t_m}) -   \ti \e F(t_m,\ti X^{n, t_k, X^n_{t_k}} _{t_m})   $$
and then uses  \eqref{F Lipschitz-new} and Lemma \ref{X-Lemma}-\eqref{XandcX}. This gives 
\equa
 \|d_2(m) \| &\le&  C(L_f, c^{2,3}_{\ref{thm1}}, b,\sigma,T, p_0)    \Psi(x)    \frac{1}{(T-t_m)^{\frac{1}{2}} } h^{\frac{1}{4}}.
\tion
 For $d_3$ we start with  Jensen's inequality and continue then similarly  as above to get 
\equa
 \|d_3(m) \| \le  \|  F(t_m, X^n_{t_m}) -  F(t_m,X_{t_m}) \| 
\le  C(L_f, c^{2,3}_{\ref{thm1}}, b,\sigma,T, p_0) \Psi(x)   \frac{1}{(T-t_m)^{\frac{1}{2}}}  h^\frac{1}{4},
\tion
and for the last term we get
\equa
 \|d_4(m) \| &\le & 
 L_f  (  h^\half + \| X_{t_m} - X^n_{t_m}\|    +  \| Y_{t_m} - Y^n_{t_m}\|+  \| Z_{t_m} -Z^n_{t_m}\| ).
\tion
This implies 
\equal \label{newYdiff} 
  \| Y_{t_k} - Y^n_{t_k}\|  \le  C  \Psi(x)  h^{\frac{1}{4}}+ h L_f \sum_{m=k}^{n-1}(  \| Y_{t_m} - Y^n_{t_m}\| +  \| Z_{t_m} -Z^n_{t_m}\|), 
\tionl
 where $C=  C(L_f, C^y_{\ref{no f}}, C^y_{\ref{difference-estimates for Y and
        Z}},  C_{\ref{betterZ}}, c^{2,3}_{\ref{thm1}},b,\sigma, T,p_0) = {\ce C(b,\sigma, f,g, T,p_0,\delta)}.$  \bigskip
        
For $\| Z_{t_k} -Z^n_{t_k}\|$ we use the representations  \eqref{Z-representation}, \eqref{cZ-exactly} and the approximation  \eqref{wrong-discreteZ} as well as  Proposition \ref{discreteZand-wrongZdifference}. Instead of 
$N^{n,t_k}_{t_n}$ we will use  here the notation $N^{n,\tau_k}_{\tau_n}$ to indicate its measurability w.r.t.~the filtration $(\cf_t).$   It holds that
\equal \label{Zand-its-approximations}
 \|Z^n_{t_k}  - Z_{t_k}\| &\le & \|  Z^n_{t_k} - \hat Z^n_{t_k}\| + \|Z_{t_k}- \hat Z^n_{t_k}\|  \notag\\
&\le &  C_{\ref{discreteZand-wrongZdifference}}  \hat \Psi(x) h^{\frac{\alpha}{2}} +   \|  \sigma(t_k,X_{t_k})  \ti \e g'(\ti X^{t_k,X_{t_k}}_{t_n})   \nabla \ti X^{t_k,X_{t_k}}_{t_n}   
          -   \ti \e  \mathcal{D}^n_{k+1} g(\ti X^{n,t_k,X^n_{t_k}}_{t_n}) \|   \notag\\
      &&+  \bigg  \|\e_{t_{k}}\int_{t_{k+1}}^T f(s,X_s,Y_s,Z_s) N^{t_k}_s ds \, \sigma(t_k,X_{t_k}) \notag \\
&& \quad \quad   -\e_{\tau_k} h\sum_{m=k+1}^{n-1}f( t_{m+1}, X^n_{t_m}, Y^n_{t_m},Z^n_{t_m})   N^{n,\tau_k}_{\tau_m} \sigma(t_{k+1},X^n_{t_k})
  \bigg \|    \notag \\
&& +     \bigg  \|\e_{t_k}\int_{t_k}^{t_{k+1}} f(s,X_s,Y_s,Z_s) N^{t_k}_s ds \, \sigma(t_k,X_{t_k})  \bigg \| .
\tionl
For the terminal condition  Proposition \ref{no f}  provides  
\equal \label{Z-terminal-cond} 
  \|  \sigma(t_k,X_{t_k})  \ti \e g'(\ti X^{t_k,X_{t_k}}_{t_n})   \nabla \ti X^{t_k,X_{t_k}}_{t_n}   
          -   \ti \e  \mathcal{D}^n_{k+1} g(\ti X^{n,t_k,X^n_{t_k}}_{t_n}) \|
          \le  (C^z_{\ref{no f}})^\half \Psi(x)  h^\frac{1}{4}. 
\tionl          
We continue with the generator terms and use $F$ defined in \eqref{theF}  to decompose the difference
\equa
&& \pull \pull \e_{t_k}f(s,X_s,Y_s,Z_s) N^{t_k}_s \sigma(t_k,X_{t_k})-\e_{\tau_k}f( t_{m+1}, X^n_{t_m}, Y^n_{t_m},Z^n_{t_m})  N^{n,\tau_k}_{\tau_m} \sigma(t_{k+1},X^n_{t_k})    \\
& = & \e_{t_k}f(s,X_s,Y_s,Z_s) N^{t_k}_s \sigma(t_k,X_{t_k})  -   \e_{t_k}f(t_m, X_{t_m},Y_{t_m},Z_{t_m}) N^{t_k}_{t_m}\sigma(t_k,X_{t_k})     \\
&& + \e_{t_k}F(t_m,X_{t_m}) N^{t_k}_{t_m} \sigma(t_k,X_{t_k})  -   \e_{\tau_k} F(t_m,X^n_{t_m}) N^{n,\tau_k}_{\tau_m} \sigma(t_{k+1},X^n_{t_k})      \\
&& + \e_{\tau_k}\left [ [  F(t_m,X^n_{t_m})   - F(t_m,X_{t_m})] N^{n,\tau_k}_{\tau_m} \sigma(t_{k+1},X^n_{t_k})    \right ] \\
&& + \e_{\tau_k} \left [ [f(t_m, X_{t_m},Y_{t_m},Z_{t_m})- f(t_{m+1}, X^n_{t_m}, Y^n_{t_m},Z^n_{t_m})]N^{n,\tau_k}_{\tau_m} \sigma(t_{k+1},X^n_{t_k})    \right ]  \\
&=:&  {\tt t}_1(s,m)+{\tt t}_2(m)+ {\tt t}_3(m) + {\tt t}_4(m)
\tion
where $s \in [t_m, t_{m+1}).$
For ${\tt t}_1$ we use that $\e_{t_k}f(t_m, X_{t_k},Y_{t_k},Z_{t_k}) (N^{t_k}_s  -N^{t_k}_{t_m}) =0,$ so that  
\equa
 \|{\tt t}_1(s,m)\| 
 &\le &   \| \e_{t_k}f(s,X_s,Y_s,Z_s) N^{t_k}_s \sigma(t_k,X_{t_k})  -   \e_{t_k}f(t_m, X_{t_m},Y_{t_m},Z_{t_m}) N^{t_k}_s\sigma(t_k,X_{t_k})   \| \\
 && +  \|  \e_{t_k}(f(t_m, X_{t_m},Y_{t_m},Z_{t_m})  - f(t_m, X_{t_k},Y_{t_k},Z_{t_k}) ) (N^{t_k}_s  -N^{t_k}_{t_m})\sigma(t_k,X_{t_k}) \|.
\tion
As before, we rewrite the conditional expectations with the help of  the independent copy $\ti B.$  Then 
\equa
&& \less \e_{t_k}f(s,X_s,Y_s,Z_s) N^{t_k}_s   -   \e_{t_k}f(t_m, X_{t_m},Y_{t_m},Z_{t_m}) N^{t_k}_s  \\
&=&   \ti \e [(f(s, \ti X^{t_k,X_{t_k}}_s, \ti Y^{t_k,X_{t_k}}_s, \ti Z^{t_k,X_{t_k}}_s) - f(t_m, \ti X^{t_k,X_{t_k}}_{t_m}, \ti Y^{t_k,X_{t_k}}_{t_m},\ti  Z^{t_k,X_{t_k}}_{t_m})) \ti N^{t_k}_s]
\tion
and
\equa
&& \less \e_{t_k}(f(t_m, X_{t_m},Y_{t_m},Z_{t_m})  - f(t_m, X_{t_k},Y_{t_k},Z_{t_k}) ) (N^{t_k}_s  -N^{t_k}_{t_m}) \\
&=& \ti \e [(f(t_m, \ti X^{t_k,X_{t_k}}_{t_m}, \ti Y^{t_k,X_{t_k}}_{t_m},\ti  Z^{t_k,X_{t_k}}_{t_m})   - f(t_m, X_{t_k},Y_{t_k},Z_{t_k}) ) (\ti N^{t_k}_s  - \ti N^{t_k}_{t_m})].                        
\tion
We apply  the  conditional H\"older inequality, and from   the estimates \eqref{norm-weight}   and  
$\ti \e |\ti  N^{t_k}_s   -  \ti N^{t_k}_{t_m}|^2 \le  C(b,\sigma,T,\delta)  \frac{h}{(s-t_k)^2}$  we get
\equa
 \|{\tt t}_1(s,m)\| 
 &\le & \frac{\kappa_2 \| \sigma \|_{\infty}}{(s-t_k)^\half}  \| f(s, X_s, Y_s,Z_s)- f(t_m, X_{t_m},Y_{t_m},Z_{t_m})\|  \\
 && + C(b,\sigma,T,\delta)  \frac{ h^{\half}}{s-t_k}\ \| f(t_m, X_{t_m},Y_{t_m},Z_{t_m})- f(t_k, X_{t_k},Y_{t_k},Z_{t_k})  \|  \\
& \le &  C(L_f, C^y_{\ref{difference-estimates for Y and Z}},C_{\ref{betterZ}},\kappa_2, b,\sigma,T,p_0, \delta) \Psi(x) \frac{h^{\frac{1}{2}} }{(s-t_k)^\frac{1}{2}},
\tion 
since for $0\le t <s \le T$ we have by Theorem  \ref{difference-estimates for Y and Z} and Proposition \ref{betterZ} that
\equal \label{f-bounds}
\| f(s, X_s, Y_s,Z_s)- f(t, X_t,Y_t, Z_t)\| &\le& C(L_f,
C^y_{\ref{difference-estimates for Y and Z}},C_{\ref{betterZ}}, b,\sigma,T,p_0) \Psi(x) 
(s-t)^{\frac{1}{2}}.  
 \tionl
For the estimate of ${\tt t}_2$  Lemma \ref{X-Lemma}, Lemma \ref{N-difference-estimate}, 
\eqref{F Lipschitz-new}  and \eqref{norm-weight} yield
\equa
 \|{\tt t}_2(m)\|&=& \|  \ti \e  F(t_m,\ti X^{t_k,X_{t_k}}_{t_m})  \ti N^{t_k}_{t_m} \sigma(t_k,X_{t_k})  -   \ti \e  F(t_m, \ti \cX^{\tau_k,\cX_{\tau_k}}_{\tau_m}) \ti N^{n,\tau_k}_{\tau_m} \sigma(t_{k+1},\cX_{\tau_k})  \| \\
 &\le &  \frac{ C(\kappa_2, \sigma)}{(t_m - t_k)^\half} \pr{\e \ti \e  (F(t_m,\ti X^{t_k,X_{t_k}}_{t_m})  - F(t_m, \ti \cX^{\tau_k,\cX_{\tau_k}}_{\tau_m}))^2 }^{\half}\\
&&+  ( \e   \ti \e   |F(t_m,\ti \cX^{\tau_k,\cX_{\tau_k}}_{\tau_m})-F(t_m,\cX_{\tau_k})|^2    \ti \e  | \ti N^{t_k}_{t_m} \sigma(t_k,X_{t_k})  -    \ti N^{n,\tau_k}_{\tau_m} \sigma(t_{k+1},\cX_{\tau_k}) |^2  )^\half \\
&\le&  C(L_f, c^{2,3}_{\ref{thm1}}, \kappa_2,b,\sigma,T,p_0,\delta) \frac{\Psi(x)}{(T-t_m)^{\frac{1}{2}} }  \frac{h^\frac{1}{4}}{ (t_m -t_k)^\frac{1}{2}}.  
\tion
For ${\tt t}_3$ we use the conditional H\"older inequality, \eqref{F Lipschitz-new}, \eqref{norm-N}  and Lemma \ref{X-Lemma}:
\equa
 \|{\tt t}_3(m)\|&=& \left\|\e_{\tau_k}\left [ [F(t_m,X^n_{t_m} ) - F(t_m,X_{t_m})] N^{n,\tau_k}_{\tau_m} \sigma(t_{k+1},\cX_{\tau_k})   \right ] \right\| \\
 &\le &   \frac{   C(\widehat\kappa_2, \sigma)}{(t_m - t_k)^\half}  \left\|  F(t_m, X^n_{t_m} ) - F(t_m,X_{t_m}) \right\|   \\
 &\le &  C(L_f, c^{2,3}_{\ref{thm1}},b,\sigma,T,p_0,\delta)  \frac{\Psi(x)}{(T-t_m)^{\frac{1}{2}} }  \frac{h^\frac{1}{4}}{ (t_m -t_k)^\frac{1}{2}}. 
\tion
The term  ${\tt t}_4$ can be estimated as follows:
\equa
 \|{\tt t}_4(m)\|&=& \left\|  \e_{\tau_k} \left [ [f(t_m, X_{t_m},Y_{t_m},Z_{t_m})- f(t_{m+1}, X^n_{t_m}, Y^n_{t_m},Z^n_{t_m})]N^{n,\tau_k}_{\tau_m} \sigma(t_{k+1},\cX_{\tau_k})    \right ] \right\| \\
 &\le &  \frac{C(L_f, b,\sigma,T, \delta)}{ (t_m -t_k)^\half} (h^\half + \|  X_{t_m} -X^n_{t_m} \| + \|  Y_{t_m} -Y^n_{t_m} \| +  \| Z_{t_m} - Z^n_{t_m} \|).
 \tion
Finally, for the remaining term of the estimate of    $\| Z_{t_k} -Z^n_{t_k}\|, $ we use 
 \eqref{f-bounds}  and \eqref{norm-weight} to get 
\equa
 \left  \|\e_{t_k} f(s,X_s,Y_s,Z_s) N^{t_k}_s  \, \sigma(t_k,X_{t_k})  \right \| 
 &=&  \|\e_{t_k}[(  f(s,X_s,Y_s,Z_s) - f(s,X_{t_k},Y_{t_k},Z_{t_k})) N^{t_k}_s ]\, \sigma(t_k,X_{t_k}) \| \\
 &\le&  C(L_f, C^y_{\ref{difference-estimates for Y and
     Z}},C_{\ref{betterZ}}, \kappa_2, b,\sigma,T, p_0) \Psi(x). 
 \tion 
 Consequently, from \eqref{Zand-its-approximations}, \eqref{Z-terminal-cond}, the estimates for the remaining term  and for ${\tt t}_1,...,{\tt t}_4$ it follows that 
\equa
 \| Z_{t_k} -Z^n_{t_k}\| 
 &\le &   C_{\ref{discreteZand-wrongZdifference}} \hat \Psi (x)  h^{\frac{\alpha}{2}} +    (C^z_{\ref{no f}})^\half \Psi(x) h^\frac{1}{4} + C(L_f, C^y_{\ref{difference-estimates for Y and
     Z}},C_{\ref{betterZ}},b,\sigma,T, p_0,\kappa_2) \Psi(x) h\\
&&+    C(L_f, C^y_{\ref{difference-estimates for Y and Z}},C_{\ref{betterZ}},\kappa_2,b,\sigma,T,p_0,\delta) \Psi(x) h^{\frac{1}{2}} 
 \int_{t_k}^{T} \frac{ds }{ (s-t_k)^\frac{1}{2}}\\
&&+   C(L_f, c^{2,3}_{\ref{thm1}}, \kappa_2, b,\sigma,T,p_0,\delta) h  \sum_{m=k+1}^{n-1}   \frac{\Psi(x)}{(T-t_m)^{\frac{1}{2}} }  \frac{h^\frac{1}{4}}{ (t_m -t_k)^\frac{1}{2}}  \\
&& +   C(L_f, b,\sigma,T,  \delta )h \sum_{m=k+1}^{n-1}  (\|  Y_{t_m} -Y^n_{t_m} \| +  \| Z_{t_m} - Z^n_{t_m} \|)\frac{1}{ (t_m -t_k)^\half} \\
&\le &  C(C_{\ref{discreteZand-wrongZdifference}},C^z_{\ref{no f}}) \hat \Psi (x) h^{\frac{\alpha}{2}\wedge\frac{1}{4}}  
 +    C(L_f, c^{2,3}_{\ref{thm1}}, C^y_{\ref{difference-estimates for Y and Z}},C_{\ref{betterZ}},\kappa_2,b,\sigma,T,p_0,\delta) \Psi(x)   h^\frac{1}{4}  \\
 && +  C(L_f, b,\sigma,T,  \delta ) \sum_{m=k+1}^{n-1}  (\|  Y_{t_m} -Y^n_{t_m} \| +  \| Z_{t_m} - Z^n_{t_m} \|)\frac{1}{(t_m -t_k)^\half}  h.
\tion

Then we use \eqref{newYdiff} and the above estimate to get
\equa
&& \les \| Y_{t_k} - Y^n_{t_k}\| + \| Z_{t_k} -Z^n_{t_k}\| \\ &\le &
   C(C_{\ref{discreteZand-wrongZdifference}},C^z_{\ref{no f}})\hat  \Psi (x) 
 h^{\frac{\alpha}{2}\wedge\frac{1}{4}}    +    C(L_f,C^y_{\ref{no f}},C^y_{\ref{difference-estimates for Y and
        Z}},  C_{\ref{betterZ}}, c^{2,3}_{\ref{thm1}}, \kappa_2, b, \sigma, T,p_0, \delta) \Psi(x) h^\frac{1}{4}  \\
 && + C(L_f,b,\sigma,T, \delta) \sum_{m=k+1}^{n-1}  (\|  Y_{t_m} -Y^n_{t_m} \| +  \| Z_{t_m} - Z^n_{t_m} \|)\frac{1}{ (t_m -t_k)^\half} h.
\tion
Consequently, {\ce  summarizing the dependencies, there is a $ C=C(b,\sigma,f,g,T,p_0,\delta)$ such that} 
 \equa
 \| Y_{t_k} - Y^n_{t_k}\| + \| Z_{t_k} -Z^n_{t_k}\| &\le& {\ce C }
 \hat \Psi(x)  h^{\frac{\alpha}{2}\wedge\frac{1}{4}}. 
\tion
By Theorem \ref{difference-estimates for Y and Z}  {\ce (note that by Assumption  \ref{Xgeneral}  on $g$ we have $\alpha=1$)} it follows that   
$$\| Y_v- Y^n_v\| \le \| Y_v- Y_{t_k}\|+ \| Y_{t_k}- Y^n_{t_k}\| \le   C^y_{\ref{difference-estimates for Y and Z}}\Psi(x) h^\half +    \hat \Psi(x)  h^{\frac{\alpha}{2}\wedge\frac{1}{4}}.   $$
while Proposition \ref{betterZ} implies that
$$\| Z_v- Z_{t_k} \|  \le C_{\ref{betterZ}}\Psi(x) h^{\frac{1}{2}}, $$  
{\ce and hence we have 
\begin{align*}
  \e_{0,x} |Y_v - Y^n_v|^2   +     \e_{0,x} |Z_v - Z^n_v|^2      \le   C_{\ref{the-result}}  \hat{\Psi}(x)^2h^{ \frac{1}{2} \wedge \alpha}  
\end{align*}
with  $C_{\ref{the-result}}  =  C_{\ref{the-result}}(b,\sigma,f,g,T,p_0,\delta).$}

\section{Some properties of solutions to BSDEs and their associated PDEs  \label{4}}

\subsection{Malliavin weights}
We use the SDE from \eqref{BSDE} started in $(t,x)$, 
\equal \label{process-X}
X^{t,x}_s = x +  \int_t^s b(r,X^{t,x}_r)dr + \int_t^s  \sigma(r, X^{t,x}_r)dB_r,   \quad 0\le t \le s \le T
\tionl
and recall the Malliavin weight and its properties from  \cite[Subsection 1.1 and Remark 3]{GGG}. 
\begin{lemme}  \label{Malliavin-weights} Let  $H: \R \to \R$ be a polynomially bounded Borel function.
If  Assumption  \ref{hypo2} holds and  $X^{t,x}$   is given by \eqref{process-X} 
then setting  $$  G(t,x) := \e H(X_T^{t,x}) $$ implies that 
 $G \in  C^{1,2}([0,T)\times \R ).$ 
Especially it holds for $0 \le t \le r < T$  that 
$$ \partial_x G(r, X_r^{t,x}) =  \e [ H(X_T^{t,x}) N_T^{r,(t,x)} |\cf^t_r ],
$$
where $(\cf^t_r)_{r\in [t,T]}$  is the augmented natural filtration of  $(B^{t,0}_r)_{r \in [t,T]},$  
$$N_T^{r,(t,x)}= \frac{1}{T-r} \int_r^T\frac{\nabla X^{t,x}_s }{\sigma(s,X_s^{t,x}) \nabla X^{t,x}_r  } dB_s,
$$
 and $\nabla X^{t,x}_s$ is given in \eqref{nabla-X}.
Moreover,  for $ q \in (0, \infty)$ there exists a $\kappa_q>0$  such that a.s. 
\equal \label{norm-weight}
 (\e[| N_T^{r,(t,x)}|^q |\cf^t_r ])^\frac{1}{q} \le \frac{\kappa_q}{(T-r)^\frac{1}{2}} \quad \text{ and} \quad \e[ N_T^{r,(t,x)} |\cf^t_r ]=0 \,\, a.s.
 \tionl
and we have 
  $$\|   \partial_x G(r, X_r^{t,x}) \|_{L_p(\PP)} \le \kappa_{q} \frac{\|H(X_T^{t,x}) - \e[ H(X_T^{t,x})|\cf^t_r ]\|_p  }{\sqrt{T-r}}   $$
  for $1<q,p < \infty$ with $\frac{1}{p} + \frac{1}{q} =1.$  
\end{lemme}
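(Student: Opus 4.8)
The plan is to derive the representation by a Bismut--Elworthy--Li type integration by parts in the Malliavin sense, establishing it first for smooth $H$ and then extending to polynomially bounded Borel $H$, with the non-degeneracy $\sigma\ge\delta$ from Assumption \ref{hypo2} supplying the $C^{1,2}$ regularity. Throughout I would exploit the flow relation $X_T^{t,x}=X_T^{r,X_r^{t,x}}$ and the cocycle property $\nabla X_s^{t,x}/\nabla X_r^{t,x}=\nabla X_s^{r,X_r^{t,x}}$ of the variational process \eqref{nabla-X}.

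First I would treat $H\in C_b^1$. By the standard differentiability of SDE flows under Assumption \ref{hypo2}, the map $y\mapsto X_T^{r,y}$ is $\PP$-a.s.~differentiable with derivative $\nabla X_T^{r,y}$ solving \eqref{nabla-X} started at $r$ with $\nabla X_r^{r,y}=1$, and one may differentiate under the expectation to get $\partial_y G(r,y)=\e[H'(X_T^{r,y})\nabla X_T^{r,y}]$. Using the relation $D_sX_T^{r,y}=\tfrac{\nabla X_T^{r,y}}{\nabla X_s^{r,y}}\sigma(s,X_s^{r,y})$ for $s\in[r,T]$, the adapted weight $u_s:=\tfrac{1}{T-r}\tfrac{\nabla X_s^{r,y}}{\sigma(s,X_s^{r,y})}\ind_{[r,T]}(s)$ satisfies $\int_r^T u_s\,D_sH(X_T^{r,y})\,ds=H'(X_T^{r,y})\nabla X_T^{r,y}$. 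Applying the duality between the Malliavin derivative and the divergence (for which the Skorohod integral of the adapted $u$ coincides with its It\^o integral) yields
\begin{align*}
\partial_y G(r,y)=\e\Big[H(X_T^{r,y})\,\tfrac{1}{T-r}\int_r^T\tfrac{\nabla X_s^{r,y}}{\sigma(s,X_s^{r,y})}\,dB_s\Big].
\end{align*}
Evaluating at $y=X_r^{t,x}$, invoking the cocycle property to identify the integrand with $N_T^{r,(t,x)}$, and conditioning on $\cf^t_r$ through the decomposition of $B$ into its increments before and after $r$ gives $\partial_x G(r,X_r^{t,x})=\e[H(X_T^{t,x})N_T^{r,(t,x)}\mid\cf^t_r]$.

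Next I would establish the moment bounds on $N$. The zero conditional mean $\e[N_T^{r,(t,x)}\mid\cf^t_r]=0$ is immediate, since conditionally on $\cf^t_r$ the quantity $N_T^{r,(t,x)}$ is an It\^o integral over $[r,T]$. For the $L_q$ bound I would apply the conditional Burkholder--Davis--Gundy inequality, use $\sigma\ge\delta$ together with the standard $L_q$ estimates for $\nabla X_s^{r,X_r^{t,x}}=\nabla X_s^{t,x}/\nabla X_r^{t,x}$ (which has moments of all orders by boundedness of $b_x,\sigma_x$), and note that integrating over an interval of length $T-r$ contributes a factor $(T-r)^{q/2}$; dividing by $(T-r)^q$ produces the bound $\kappa_q(T-r)^{-1/2}$. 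The $L_p$ estimate for $\partial_x G$ then follows by centering via $\e[N_T^{r,(t,x)}\mid\cf^t_r]=0$, so that $\partial_x G(r,X_r^{t,x})=\e[(H(X_T^{t,x})-\e[H(X_T^{t,x})\mid\cf^t_r])N_T^{r,(t,x)}\mid\cf^t_r]$, applying the conditional H\"older inequality with exponents $p,q$, inserting the $L_q$ bound on $N$, and taking $L_p(\PP)$-norms together with the tower property.

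Finally, the extension to polynomially bounded Borel $H$ and the full $C^{1,2}([0,T)\times\R)$ regularity of $G$ constitute the main obstacle. Since the representation no longer involves $H'$, approximating a general such $H$ by smooth functions and passing to the limit — controlling $H(X_T^{t,x})N_T^{r,(t,x)}$ in $L_1$ through the moment bounds on $X_T^{t,x}$ and on $N$ — yields both the formula and the differentiability of $y\mapsto G(r,y)$ for all such $H$. The genuine regularity statement is where the non-degeneracy $\sigma\ge\delta$ and the H\"older hypotheses in Assumption \ref{hypo2} are essential: they guarantee that $X_T^{t,x}$ possesses a smooth transition density, so that the expectation operator regularizes and one may iterate the integration by parts (or invoke parabolic regularity for the associated Kolmogorov equation) to obtain continuity of $\partial_t G$, $\partial_x G$ and $\partial_{xx}G$ on $[0,T)\times\R$. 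Care is needed to match the polynomial growth of $H$ against the available moment bounds at each step of the approximation.
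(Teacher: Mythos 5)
Your proposal is correct and follows essentially the same route as the source of this statement: the paper itself gives no proof of Lemma \ref{Malliavin-weights} but recalls it from \cite[Subsection 1.1 and Remark 3]{GGG}, where the representation is derived by exactly the Bismut--Elworthy--Li integration by parts you carry out (Malliavin duality with the adapted weight $u_s=\tfrac{1}{T-r}\tfrac{\nabla X^{r,y}_s}{\sigma(s,X^{r,y}_s)}$, the cocycle identification of $N_T^{r,(t,x)}$, conditional BDG for \eqref{norm-weight}, and centering plus conditional H\"older for the $L_p$ bound). The two points you flag as delicate --- the passage from smooth to polynomially bounded Borel $H$ (which works because $X_T^{t,x}$ has a density, so mollifications converge a.e.\ under its law, with domination by the polynomial bound) and the $C^{1,2}$ regularity of $G$ via parabolic Schauder theory under the non-degeneracy and H\"older hypotheses of Assumption \ref{hypo2} --- are indeed where the remaining work lies, and your treatment of them is sound.
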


\subsection{Regularity of solutions to BSDEs}
The following result  originates from \cite[Theorem 1]{GGG} where  also path dependent cases were included. We formulate it only for our Markovian
setting but use $\PP_{t,x}$  since we are interested in an estimate for all $(t,x) \in  [0,T) \times \R.$ A sketch of a proof of this formulation can be found in \cite{GLL}. 

\begin{thm} \label{difference-estimates for Y and Z}
Let Assumption \ref{hypo2} and  \ref{hypo3} hold.
Then for any $p\in [2,\infty)$ the following assertions are true.

\begin{enumerate}[(i)]
\item    There exists a constant $C^y_{\ref{difference-estimates for Y and Z}} >0$ such that for $0\le t < s \le T$ and $x\in \R,$ \label{one}
\equa
 \| Y_s - Y_t\|_{L_p(\PP_{t,x})} \le C^y_{\ref{difference-estimates for Y and Z}}  \Psi(x) \left ( \int_t^s (T-r)^{\alpha -1}dr \right )^\half,
\tion

\item  there exists a constant $C^z_{\ref{difference-estimates for Y and Z}} >0$ such that for $0\le t <  s<T$ and $x\in \R,$ %\label{two}  
\equa
 \| Z_s - Z_t\|_{L_p(\PP_{t,x})} \le C^z_{\ref{difference-estimates for Y and Z}}  \Psi(x)  \left ( \int_t^s (T-r)^{\alpha -2}dr \right )^\half.
\tion
\end{enumerate}
The constants $ C^y_{\ref{difference-estimates for Y and Z}}$ and
$C^z_{\ref{difference-estimates for Y and Z}}$ depend on 
   $ (L_f, K_f, C_g,c^{1,2}_{\ref{thm1}}, \kappa_q, b,\sigma, T, p_0,p)$, and 
  $\Psi(x)$ is defined in \eqref{Psi}.
\end{thm}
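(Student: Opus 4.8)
The plan is to reduce both increments to the Malliavin-weight representation of $Z$ together with the weight estimate \eqref{norm-weight}, treating the two assertions separately and in both cases isolating an a priori blow-up rate of $Z$ near the terminal time.

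For \eqref{one} I would start from the backward equation in \eqref{BSDE}, which yields
\[
Y_s-Y_t=-\int_t^s f(r,X_r,Y_r,Z_r)\,dr+\int_t^s Z_r\,dB_r.
\]
Taking the $L_p(\PP_{t,x})$-norm, I bound the stochastic integral by the Burkholder--Davis--Gundy inequality and then, via Minkowski's inequality, by $\left(\int_t^s\|Z_r\|_{L_p(\PP_{t,x})}^2\,dr\right)^{1/2}$. The decisive ingredient is the a priori bound $\|Z_r\|_{L_p(\PP_{t,x})}\le C\,\Psi(x)\,(T-r)^{(\alpha-1)/2}$. For the part of $Z$ coming from the terminal condition this follows from Lemma \ref{Malliavin-weights} applied with $H=g$: the final inequality there produces a factor $(T-r)^{-1/2}$, while the local $\alpha$-Hölder bound \eqref{eq5}, combined with $\|X_T-X_r\|\sim(T-r)^{1/2}$ and standard polynomial moment bounds for $X^{t,x}$ (which supply the growth factor $\Psi(x)$ from \eqref{Psi}), gives $\|g(X_T)-\e[g(X_T)|\cf^t_r]\|_p\le C\,\Psi(x)(T-r)^{\alpha/2}$. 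The generator contribution, handled with the vanishing-mean identity in \eqref{norm-weight} and the Lipschitz property \eqref{Lipschitz}, is of the same or milder order. Inserting this bound produces exactly the factor $\left(\int_t^s(T-r)^{\alpha-1}\,dr\right)^{1/2}$ for the martingale term, and the drift term (for which $\|f(r,X_r,Y_r,Z_r)\|_p\le C\Psi(x)(1+(T-r)^{(\alpha-1)/2})$) is absorbed into it by the Cauchy--Schwarz inequality together with $T-r\le T$.

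For the second assertion I would use the representation $Z_r=\sigma(r,X_r)u_x(r,X_r)$ from Theorem \ref{thm1} and set $w:=\sigma u_x$, so that $Z_r=w(r,X_r)$. The core step is to establish the blow-up rates of the spatial derivatives of $u$ near $T$: iterating the argument of Lemma \ref{Malliavin-weights}, with one Malliavin weight produced per spatial derivative and each weight contributing a factor $(T-r)^{-1/2}$ through \eqref{norm-weight}, one obtains $\|u_x(r,\cdot)\|\le C\Psi(x)(T-r)^{(\alpha-1)/2}$ and $\|u_{xx}(r,\cdot)\|\le C\Psi(x)(T-r)^{(\alpha-2)/2}$, the second being the dominant singularity. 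Viewing $Z_r=w(r,X_r)$ as a semimartingale, its martingale part carries the integrand $\sigma\,\partial_x w=\sigma(\sigma_x u_x+\sigma u_{xx})$, which is governed by $\sigma^2 u_{xx}$ and hence of order $(T-r)^{(\alpha-2)/2}$; the Burkholder--Davis--Gundy inequality then gives the contribution $C\Psi(x)\left(\int_t^s(T-r)^{\alpha-2}\,dr\right)^{1/2}$, matching the claim. The drift of $Z_r$, obtained by differentiating the PDE \eqref{pde} once in space, is likewise of order $(T-r)^{(\alpha-2)/2}$, and its contribution to the increment is absorbed into the same quantity by Cauchy--Schwarz and $T-r\le T$.

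The main obstacle lies in the second assertion, and specifically in the low regularity of the data: since $g$ is only locally $\alpha$-Hölder, $u$ need not belong to $C^{1,3}$, so the Itô expansion of $w(r,X_r)$ cannot be justified naively, and a naive drift computation would involve $u_{xxx}$. I would circumvent this either by smoothing $g$ and passing to the limit using the uniform weighted bounds above, or by computing the semimartingale drift directly from the once-differentiated PDE, where the apparent third-order terms cancel and leave an expression controlled by $u_x$ and $u_{xx}$ only. A related delicate point is that the two singular scales -- the weight singularity $(r-t)^{-1/2}$ arising inside the generator integral and the terminal singularity carried by $u_{xx}$ -- must be balanced, and one must check that the generator contributes no singularity worse than $(T-r)^{(\alpha-2)/2}$; this rests once more on the vanishing-mean identity in \eqref{norm-weight} and the Lipschitz bound \eqref{Lipschitz}. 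These are precisely the weighted estimates carried out in \cite{GGG} and sketched in \cite{GLL}.
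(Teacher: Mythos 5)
Your proposal is correct and matches, in essence, the proof the paper relies on: the paper does not argue Theorem \ref{difference-estimates for Y and Z} from scratch but quotes it from \cite{GGG} (with the Markovian formulation sketched in \cite{GLL}), and those references proceed exactly as you do --- the BSDE decomposition of $Y_s-Y_t$ combined with the a priori bound $\|Z_r\|_{L_p(\PP_{t,x})}\le C\,\Psi(x)(T-r)^{(\alpha-1)/2}$ for part (i), and, for part (ii), the Malliavin-weight bounds on $u_x$ and $u_{xx}$ (as recorded in Theorem \ref{thm1}) fed into the once-differentiated (variational) equation so that no $u_{xxx}$ appears, with mollification of the data to justify the computations at the given low regularity. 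The two delicate points you flag --- circumventing the third derivative and balancing the weight singularity $(s-t)^{-1/2}$ inside the generator integral against the terminal-time singularity of $u_{xx}$ --- are precisely the issues handled by the weighted estimates in \cite{GGG}, so your sketch is faithful to the intended argument.
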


\subsection{Properties of the associated PDE}
The  theorem below collects properties of the solution to the PDE associated to the FBSDE \eqref{BSDE}. For a proof see   
\cite[Theorem 3.2]{ZhangII}, \cite{ZhangIII} and  \cite[ Theorem 5.4]{GLL}.

\begin{thm}\label{thm1}
  Consider the FBSDE \eqref{BSDE} and let Assumptions \ref{hypo2} and  \ref{hypo3} hold.
  Then for the solution $u$ of the associated PDE
 \equal \label{pde}
\left\{ \begin{array}{l}  u_t(t,x) +   \tfrac{\sigma^2(t,x)}{2} u_{xx}(t,x) + b(t,x) u_x(t,x) + f(t,x,u(t,x), \sigma(t,x) u_x(t,x)) =0,\\
 \hspace{30em} t\in [0,T), x\in \R, \\
 u(T,x)=g(x) , \quad x \in \R \end{array}\right . 
 \tionl 
  
   we have
  \begin{enumerate}[(i)]
  \item  $Y_t=u(t,X_t)$ a.s., where $u(t,x)=\e_{t,x} \!\left (g(X_T)+\int_t^T\! f(r,X_r,Y_r,Z_r)dr \right )$ %%\label{first}
    and  $|u(t,x)|\le c^1_{\ref{thm1}} \Psi(x)$
    with $\Psi$ given in \eqref{Psi}, {\ce where $c^{1}_{\ref{thm1}}$ depends  on  $L_f, K_f, C_g, T,p_0$ and on the bounds and  Lipschitz constants of $b$ and $\sigma.$}
    \item % \label{second}
    \begin{enumerate}
    \item    $\partial_x u$ exists  and is continuous in $[0,T)\times \R,$
    \item  $Z^{t,x}_s= u_x(s,X_s^{t,x})\sigma(s,X_s^{t,x})$ a.s.,
    \item $|u_x(t,x)|\le
      \frac{ c^2_{\ref{thm1}} \Psi(x)}{(T-t)^{\frac{1-\alpha}{2}}},$
    \end{enumerate}
 {\ce  where $c^{2}_{\ref{thm1}}$ depends on  $L_f, K_f, C_g,T,p_0,\kappa_2= \kappa_2(b,\sigma,T,\delta)$  and on the bounds and  Lipschitz constants of $b$ and $\sigma,$ 
and hence  $c^{2}_{\ref{thm1}} =  c^{2}_{\ref{thm1}}(L_f, K_f, C_g,b,\sigma, T,p_0,\delta).$ }
  \item 
    \begin{enumerate}
   \item $\partial^2_x u$  exists and
    is continuous in $[0,T)\times \R,$
    \item $|\partial^2_x u(t,x)|\le
      \frac{  c^3_{\ref{thm1}} \Psi(x)}{(T-t)^{1-\frac{\alpha}{2}}},$
    \end{enumerate}
    {\ce where $c^{3}_{\ref{thm1}}$ depends on  $L_f, C_g,T,p_0,\kappa_2= \kappa_2(b,\sigma,T,\delta),C^y_{\ref{difference-estimates for Y and Z}},C^z_{\ref{difference-estimates for Y and Z}}$ and on the bounds and Lipschitz constants of $b$ and $\sigma,$  and hence  $c^{3}_{\ref{thm1}} =  c^{3}_{\ref{thm1}}(L_f, K_f, C_g,b,\sigma, T,p_0,\delta).  $
}
  \end{enumerate}
\end{thm}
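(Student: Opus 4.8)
The plan is to establish the three assertions in order, treating part (i) as the nonlinear Feynman--Kac formula from standard decoupled FBSDE theory and parts (ii)--(iii) via a Malliavin-weight (Bismut--Elworthy--Li) representation combined with the local H\"older regularity of $g$. For (i), since $f$ is Lipschitz by \eqref{Lipschitz} and $g$ is polynomially bounded by \eqref{Psi}, the FBSDE \eqref{BSDE} started in $(t,x)$ has a unique solution $(Y^{t,x},Z^{t,x})$, and by the Markov property of $X^{t,x}$ the value $Y_t^{t,x}$ is deterministic, so that $u(t,x):=Y_t^{t,x}$ is well defined with $u(t,X_t)=Y_t$ a.s.; the representation of $u$ is immediate from the backward equation, and the growth bound $|u(t,x)|\le c^1_{\ref{thm1}}\Psi(x)$ follows from the standard a priori estimate for Lipschitz BSDEs together with the moment bound $\e_{t,x}\sup_{t\le s\le T}|X_s|^{p}\le C(1+|x|^{p})$ and the polynomial growth \eqref{Psi} of $g$ and of $r\mapsto f(r,x,0,0)$.

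For (ii) I would first prove the representation $Z_t^{t,x}=u_x(t,x)\sigma(t,x)$ of \eqref{Z-representation-1}: one establishes it for a smooth terminal function $g$ by differentiating the flow $x\mapsto X_s^{t,x}$, and then removes the smoothness by mollifying $g$ and passing to the limit, the point being that the Malliavin-weight form of the bound depends on $g$ only through a H\"older modulus and hence survives the limit. Centring each term via $\e[N^t_s]=0$ (valid by \eqref{norm-weight}) and applying Lemma \ref{Malliavin-weights},
\begin{align*}
|u_x(t,x)| &\le \frac{\kappa_q}{\sqrt{T-t}}\,\big\|g(X_T^{t,x})-g(x)\big\|_p \\
&\quad + \int_t^T \frac{\kappa_q}{\sqrt{s-t}}\,\big\|f(s,X_s^{t,x},Y_s,Z_s)-f(s,x,Y_t,Z_t)\big\|_p\,ds,
\end{align*}
where $(Y,Z)$ are taken along the flow from $(t,x)$. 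The local $\alpha$-H\"older property \eqref{eq5} together with $\|X_T^{t,x}-x\|_{p}=O((T-t)^{1/2})$ gives $\|g(X_T^{t,x})-g(x)\|_p\le C\Psi(x)(T-t)^{\alpha/2}$, producing the leading singularity $(T-t)^{(\alpha-1)/2}$. For the generator term I would insert the increments from Theorem \ref{difference-estimates for Y and Z}; the worst one, $\|Z_s-Z_t\|\le C\Psi(x)(T-s)^{(\alpha-1)/2}$, yields the Beta integral $\int_t^T(s-t)^{-1/2}(T-s)^{(\alpha-1)/2}\,ds=O((T-t)^{\alpha/2})$, which is of lower order. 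This gives the claimed bound on $|u_x|$ and, by dominated convergence in the representation, its continuity.

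For (iii) I would differentiate the gradient representation once more, which amounts to applying a second Malliavin weight (equivalently, differentiating $N^t_s$ and $\nabla X$ in $x$). Each spatial differentiation costs one factor $(T-t)^{-1/2}$, so the same H\"older gain $(T-t)^{\alpha/2}$ now produces the heavier singularity $(T-t)^{\alpha/2-1}=(T-t)^{-(1-\alpha/2)}$, giving the bound on $|u_{xx}|$ and its continuity. The PDE \eqref{pde} then follows by applying It\^o's formula to $u(s,X_s)$ on $[t,T)$ and matching terms with \eqref{BSDE}: the $dB_s$ parts reproduce $Z_s=u_x(s,X_s)\sigma(s,X_s)$, while the $ds$ parts give the semilinear equation.

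The main obstacle is the self-referential appearance of $Z=u_x\sigma$ inside the generator $f$: since $u_x$ (hence $Z$) already blows up like $(T-t)^{(\alpha-1)/2}$ near the terminal time, the bounds for $u_x$ and $u_{xx}$ cannot be read off directly but must be closed by a fixed-point argument in a Banach space equipped with the singular weights $(T-t)^{(1-\alpha)/2}$ and $(T-t)^{1-\alpha/2}$. One has to check that these weights are preserved under the Picard map and that all arising double integrals of Malliavin-weight singularities $(s-t)^{-1/2}$ against derivative singularities $(T-s)^{(\alpha-1)/2}$, respectively $(T-s)^{\alpha/2-1}$, stay integrable; matching the exponents so that the map contracts with exactly the claimed rates is the delicate step, and it is also the source of the mutual dependence between the constants $c^{2,3}_{\ref{thm1}}$ here and $C^y_{\ref{difference-estimates for Y and Z}},C^z_{\ref{difference-estimates for Y and Z}}$ in Theorem \ref{difference-estimates for Y and Z}.
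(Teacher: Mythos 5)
Your plan coincides with the paper's: the paper gives no in-text proof of Theorem \ref{thm1} but defers to \cite[Theorem 3.2]{ZhangII}, \cite{ZhangIII} and \cite[Theorem 5.4]{GLL}, and those proofs proceed exactly as you propose --- nonlinear Feynman--Kac for (i), Malliavin-weight representations centred via $\e[N^t_s]=0$ and the local H\"older modulus of $g$ for (ii)--(iii) (with mollification of $g$ and the Beta-integral bookkeeping you carry out), closed in weighted norms with the singularities $(T-t)^{(1-\alpha)/2}$ and $(T-t)^{1-\alpha/2}$ to break the self-reference of $Z=u_x\sigma$ in the generator, which you correctly identify as the delicate point. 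One small caution on the constant hierarchy: in the paper only $c^3_{\ref{thm1}}$ depends on $C^y_{\ref{difference-estimates for Y and Z}}$ and $C^z_{\ref{difference-estimates for Y and Z}}$, while those constants in turn depend on $c^{1,2}_{\ref{thm1}}$, so your insertion of the $Z$-increment bound from Theorem \ref{difference-estimates for Y and Z} into the proof of (ii)(c) cannot be quoted as a black box but must be re-derived inside the fixed-point iteration, as your final paragraph in effect concedes.
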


 Using Assumption \ref{Xgeneral} we are now in the position to improve the bound on $\| Z_s - Z_t\|_{L_p(\PP_{t,x})}$ given in Theorem \ref{difference-estimates for Y and Z}. 

\begin{prop} \label{betterZ}
If  Assumption \ref{Xgeneral} holds, then there exists a constant $C_{\ref{betterZ}} >0$ such that for $0\le t < s \le T$ and $x\in \R,$
$$\| Z_s - Z_t\|_{L_p(\PP_{t,x})}\leq C_{\ref{betterZ}} \Psi(x)
(s-t)^{\frac{1}{2}},$$
 where $C_{\ref{betterZ}}$ depends on $c^{2,3}_{\ref{thm1}}, b,\sigma,f,g,T,p_0,p,$ {\ce and hence $C_{\ref{betterZ}}=  C_{\ref{betterZ}}(b,\sigma, f,g,T,p_0,p,\delta).$}
\end{prop}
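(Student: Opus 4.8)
The plan is to start from the representation $Z_s=\sigma(s,X_s)u_x(s,X_s)$ furnished by Theorem \ref{thm1} and to work throughout under $\PP_{t,x}$, abbreviating $X_r:=X^{t,x}_r$. Writing
\[
Z_s-Z_t=\big(\sigma(s,X_s)-\sigma(t,X_t)\big)u_x(s,X_s)+\sigma(t,X_t)\big(u_x(s,X_s)-u_x(t,X_t)\big),
\]
I would treat the two summands separately. Since Assumption \ref{Xgeneral} forces $g\in C^2$, the terminal function is locally Lipschitz, so we may take $\alpha=1$ in Theorem \ref{thm1}(ii); this removes the blow-up at $T$ and gives the uniform bound $\abs{u_x(r,y)}\le c^2_{\ref{thm1}}\Psi(y)$ for all $(r,y)$. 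Combined with the $\half$-Hölder continuity of $\sigma$ in time, its Lipschitz continuity in space, and the moment estimate $\norm{X^{t,x}_s-x}_q\le C(1+\abs{x})(s-t)^{\half}$, Hölder's inequality bounds the first summand by $C\Psi(x)(s-t)^{\half}$. The whole statement thus reduces to proving $\norm{u_x(s,X_s)-u_x(t,X_t)}_{L_p(\PP_{t,x})}\le C\Psi(x)(s-t)^{\half}$.

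The crux, and the only place where the full strength of Assumption \ref{Xgeneral} (that $g''$ exists and satisfies \eqref{eq5}) is needed, is to upgrade the singular estimate $\abs{u_{xx}(t,x)}\le c^3_{\ref{thm1}}\Psi(x)(T-t)^{-(1-\alpha/2)}$ of Theorem \ref{thm1}(iii) to a genuinely uniform bound $\abs{u_{xx}(r,y)}\le C\Psi(y)$ on all of $[0,T]\times\R$. I would obtain this by differentiating the Feynman--Kac representation once more: the pair $(\nabla Y,\nabla Z)$ of derivatives of $(Y,Z)$ with respect to the initial datum solves the linear BSDE with terminal value $g'(X_T)\nabla X_T$ and generator $f_x\nabla X_r+f_y\nabla Y_r+f_z\nabla Z_r$, whose coefficients are bounded and Lipschitz under Assumption \ref{Xgeneral} and whose terminal value is locally Lipschitz in $X_T$ precisely because $g''$ exists. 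Applying the regularity theory of \cite{GGG} behind Theorem \ref{difference-estimates for Y and Z} to this differentiated system --- now with effective Hölder exponent $\alpha=1$ --- yields $\norm{\nabla Y_s-\nabla Y_t}_{L_p}\le C\Psi(x)(s-t)^{\half}$ together with the uniform control of $\nabla Z$, equivalently of $u_{xx}$, that the singular estimate lacked.

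With the non-singular bound $\abs{u_{xx}(r,y)}\le C\Psi(y)$ in hand, I would conclude by applying Itô's formula to $r\mapsto u_x(r,X_r)$ on $[t,s]$ (the regularity $u\in C^{1,3}$ needed here follows from the same second-differentiation argument, any residual singularity of $u_{xxx}$ remaining integrable in time). The martingale part has integrand $\sigma u_{xx}(r,X_r)$, so by the Burkholder--Davis--Gundy inequality its $L_p$-norm is at most $\pr{\int_t^s\norm{\sigma u_{xx}(r,X_r)}_p^2\,dr}^{\half}\le C\Psi(x)(s-t)^{\half}$; the drift part is bounded in $L_p$ by $\int_t^s\norm{\mathcal{L}u_x(r,X_r)}_p\,dr$ (with $\mathcal{L}$ the generator of $X$), which is $O((s-t)^{\half})$ even if the integrand carries an integrable $(T-r)^{-\half}$ singularity. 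Together with the $\sigma$-increment estimate this gives $\norm{Z_s-Z_t}_{L_p(\PP_{t,x})}\le C\Psi(x)(s-t)^{\half}$. The main obstacle is the second step: establishing the non-singular bound on $u_{xx}$ requires tracking the $Z$-dependence in the generator of the differentiated BSDE through a Gronwall argument and verifying the polynomial moment bounds for $\nabla X_T$, $\nabla^2X_T$ and the terminal value $g'(X_T)\nabla X_T$.
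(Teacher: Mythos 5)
Your reduction to the increment of $u_x(\cdot,X_\cdot)$ and your treatment of the $\sigma$-increment are fine (with $\alpha=1$ in Theorem \ref{thm1}(ii) one indeed has $|u_x|\le c^2_{\ref{thm1}}\Psi$ uniformly), but the two regularity inputs on which the rest of your argument rests are not established and do not follow from the results you invoke. First, the uniform bound $|u_{xx}(r,y)|\le C\Psi(y)$: applying Theorem \ref{difference-estimates for Y and Z} (or the theory behind Theorem \ref{thm1}) to the differentiated BSDE \eqref{nabla-bsde-variational} is circular. The generator of that equation depends on the forward state through $f_x(\Theta_r), f_y(\Theta_r), f_z(\Theta_r)$ with $\Theta_r=(r,X_r,u(r,X_r),\sigma(r,X_r)u_x(r,X_r))$, so its Lipschitz constant in the $x$-direction involves $u_{xx}$ itself --- exactly the quantity you are trying to bound --- and the only a priori control is the singular bound $|u_{xx}|\le c^3_{\ref{thm1}}\Psi(x)(T-t)^{-1/2}$ of Theorem \ref{thm1}(iii), whereas the cited theorems are stated for a constant Lipschitz constant $L_f$. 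Moreover, the relevant forward process for the differentiated system is the pair $(X,\nb X)$, whose diffusion matrix is degenerate (rank one), while Theorems \ref{thm1} and \ref{difference-estimates for Y and Z} require a scalar forward SDE with $\sigma\ge\delta>0$; so ``uniform control of $\nb Z$, equivalently of $u_{xx}$'' is the hard statement, not a corollary. Second, your It\^o step needs $u\in C^{1,3}$ with quantitative control of $u_{xxx}$ near $T$; the paper establishes only $u\in C^{1,2}$ on $[0,T)\times\R$, and a third derivative would require differentiating once more, hence smoothness beyond Assumption \ref{Xgeneral} (the second derivatives of $b,\sigma,f$ are merely bounded Lipschitz and $g''$ only locally $\alpha$-H\"older), so the parenthetical ``follows from the same second-differentiation argument'' is an unproven, and under the stated hypotheses doubtful, claim.

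The paper's proof avoids both obstacles: it writes $Z_s=\frac{\nb Y_s}{\nb X_s}\,\sigma(s,X_s)$ and uses the closed-form representation \eqref{fraction-computed} of $\nb Y/\nb X$ through the adjoint process $\Gamma$, estimating the increment in the initial datum $(t,x)\mapsto (s,X_s)$ directly via H\"older's inequality, the flow estimates of Lemma \ref{X-Lemma}, and Gronwall's lemma applied to the $\Gamma$-difference. In that route the singular bounds of Theorem \ref{thm1}(ii)--(iii) with $\alpha=1$ enter only inside time integrals, where the $(T-r)^{-1/2}$ singularity is integrable, so neither a uniform $u_{xx}$ bound nor any third derivative is ever needed. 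To salvage your plan you would have to prove the uniform $u_{xx}$ bound by hand (as the paper does in the discrete setting in Proposition \ref{u-discrete}, via the second-order differentiated BSDE and a priori estimates tolerating an integrable time-singular Lipschitz coefficient), and still circumvent the $u_{xxx}$ term, e.g.\ by estimating increments of $\nb Y/\nb X$ from the BSDE dynamics rather than by It\^o on $u_x$ --- which is precisely the paper's argument.
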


\begin{proof}
From $Z^{t,x}_s= u_x(s,X_s^{t,x})\sigma(s,X_s^{t,x})$ and $\nabla Y_s^{t,x} = \partial_x u(s,X_s^{t,x}) =u_x(s,X_s^{t,x})\nabla X_s^{t,x}$ we conclude
\equal  \label{Z-as-fraction}
Z^{t,x}_s = \frac{\nabla Y^{t,x}_s}{\nabla X^{t,x}_s} \sigma(s,X^{t,x}_s), \quad 0 \leq t \le s \leq T.
\tionl
It is well-known (see e.g. \cite{ElKarouiPengQuenez}) that  the solution $\nabla Y$ of the linear BSDE 
\begin{align}\label{nabla-bsde-variational}
\nb Y_s = g'(X_T) \nb X_T + \int_{s}^T f_x(\Theta_r) \nb X_r + f_y(\Theta_r) \nb Y_r + f_z(\Theta_r) \nb Z_r dr - \int_{s}^T \nb Z_r dB_r, \quad 0 \leq s \leq T,
\end{align}
can be represented as 
\begin{align} \label{fraction-computed}
\frac{\nb Y_s}{\nb X_s} & = \e_s \Big[g'(X_T) \nb X_T \Gamma^s_T + \int_{s}^T f_x(\Theta_r) \nb X_r  \Gamma^s_rdr \Big] \frac{1}{\nb X_s}  \notag\\
 & = \ti \e\Big[g'(\ti X^{s, X_s}_T) \nb \ti X^{s,X_s}_T \ti \Gamma^{s,X_s}_T + \int_{s}^T  f_x(\ti \Theta^{s,X_s}_r) \nb \ti X^{s,X_s}_r \ti \Gamma^{s,X_s}_rdr \Big], \quad 0 \leq t \le s \leq T,
\end{align}
where    $\Theta_r := (r,X_r,Y_r,Z_r)$  and $\Gamma^s$  denotes the adjoint process  given by 
$$\Gamma^s_r = 1 + \int_s^r f_y(\Theta_u) \Gamma^s_u du + \int_s^r f_z(\Theta_u) \Gamma^s_u dB_u, \quad   s \le r  \leq T,$$
and
$$\ti \Gamma^{t,x}_s = 1 + \int_{t}^s f_y(\ti \Theta^{t,x}_r) \ti \Gamma^{t,x}_r dr + \int_{t}^s f_z(\ti \Theta^{t,x}_r) \ti \Gamma^{t,x}_r d \ti B_r, \quad t \leq s \leq T, \,x \in \R$$
 where $\ti B$ denotes an independent copy of $B$.
 Notice that $\nb X^{t,x}_t=1,$ so that
\begin{align*}
\frac{\nb Y^{t,x}_t}{\nb X^{t,x}_t} = \nb Y^{t,x}_t
 & = \ti \e\Big[g'(\ti X^{t, x}_T) \nb \ti X^{t,x}_T \ti \Gamma^{t,x}_T + \int_{t}^T  f_x(\ti \Theta^{t,x}_r) \nb \ti X^{t,x}_r \ti \Gamma^{t,x}_rdr \Big].
\end{align*}
Then, by \eqref{Z-as-fraction},
\equa
\| Z_s - Z_t\|_{L_p(\PP_{t,x})}\le  C(\sigma) \bigg [  \bigg\|\frac{\nb Y_s}{\nb X_s}   - \frac{\nb Y_t}{\nb X_t} \bigg\|_{L_{p}(\PP_{t,x})} \!\!\!\!\!+   \|\nb Y_t\|_{L_{2p}(\PP_{t,x})} [(s-t)^\half \!+ \| X^{t,x}_s -x\|_{L_{2p}(\PP_{t,x})}] \bigg ]. 
\tion
Since $(\nabla Y_s, \nabla Z_s)$ is the solution to the linear BSDE  \eqref{nabla-bsde-variational} 
with bounded $f_x, f_y, f_z,$ we have that  $\|\nb Y_t\|_{L_{2p}(\PP_{t,x})} \le  C(b,\sigma,f,g,T,p).$   Obviously, $ \| X^{t,x}_s -x\|_{L_{2p}(\PP_{t,x})} \le  C(b,\sigma,T,p) (s-t)^\half.$
So it remains to show that 
$$\bigg\|\frac{\nb Y_s}{\nb X_s}   - \frac{\nb Y_t}{\nb X_t}
\bigg\|_{L_{p}(\PP_{t,x})}  \le C\Psi(x)(s-t)^\half.$$
 We intend to use \eqref{fraction-computed} in the following.  There is a certain degree of freedom how to connect $B$ and $\ti B$ in order to compute conditional expectations. Here, unlike  in \eqref{independent-B}, we define  
the processes 
$$ B'_u= B_{u\wedge s} + \ti B_{u \vee s}- \ti B_s \quad \text{and} \quad B''_u= B_{u\wedge t} + \ti B_{u \vee t}- \ti B_t, \quad u\ge 0  $$
as driving Brownian motions for  $\tfrac{\nb Y_s}{\nb X_s}$ and  $\tfrac{\nb Y_t}{\nb X_t},$ respectively.
This will especially simplify the estimate for $ \ti \e |\ti \Gamma^{s,X_s}_T -\ti \Gamma^{t,x}_T|^ q $ below.
From the above relations we get  for ($X_s:=X^{t,x}_s $)
\begin{align*}
\bigg\|\frac{\nb Y_s}{\nb X_s}   - \frac{\nb Y_t}{\nb X_t} \bigg\|_{L_{p}(\PP_{t,x})} 
 & \leq \left \| \ti \e \Big[g'(\ti X^{s, X_s}_T) \nb \ti X^{s,X_s}_T \ti \Gamma^{s,X_s}_T  - g'(\ti X^{t,x}_T) \nb \ti X^{t,x}_T \ti \Gamma^{t,x}_T  \Big] \right \|_p\\
& \quad +  \int_t^s \left\| \ti \e \Big[ f_x(\ti \Theta^{t,x}_r) \nb \ti X^{t,x}_r \ti  \Gamma^{t,x}_r\Big]\right\|_p dr \\
&  \quad  +\left\| \int_s^T \ti \e \Big[ f_x(\ti \Theta^{s,X_s}_r) \nb \ti X^{s,X_s}_r\ti \Gamma^{s,X_s}_r
- f_x(\ti \Theta^{t,x}_r) \nb \ti X^{t,x}_r \ti \Gamma^{t,x}_r \Big] dr \right\|_p\\
& =: J_1 + J_2 + J_3.
\end{align*}
Since $g'$ is Lipschitz continuous and of polynomial growth, the estimate
$J_1\le  C(b,\sigma,g,T,p) \Psi(x) (s-t)^\half $ follows by H\"older's inequality and  the $L_q$ -boundedness  for any  $q >0$ of all the factors, as well as  
from the estimates for $  \ti X^{s, X_s}_T - \ti X^{t,x}_T$ and $\nb \ti X^{s,X_s}_T - \nb \ti X^{t,x}_T $  like in Lemma \ref{X-Lemma}.  
For the $\Gamma$ differences we first apply  the inequalities of H\"older and BDG:
\equa
  \ti \e |\ti \Gamma^{s,X_s}_T -\ti \Gamma^{t,x}_T|^ q &\le C(T,q) & \!\!\!\bigg [ (s-t)^{q-1} \ti \e \int_t^s |f_y(\ti \Theta^{s,X_s}_r) \ti \Gamma^{s,X_s}_r|^q dr 
  + \ti \e \bigg(\int_t^s |f_z (\ti \Theta^{s,X_s}_r) \ti \Gamma^{s,X_s}_r |^2 dr \bigg)^\frac{q}{2}  \\
&& +  \ti \e \int_s^T |f_y(\ti \Theta^{s,X_s}_r) \ti \Gamma^{s,X_s}_r - f_y(\ti \Theta^{t,x}_r) \ti \Gamma^{t,x}_r|^q dr \\
&&+  \ti \e \bigg( \int_s^T |f_z(\ti \Theta^{s,X_s}_r) \ti \Gamma^{s,X_s}_r- f_z(\ti \Theta^{t,x}_r) \ti \Gamma^{t,x}_r|^2 dr \bigg)^\frac{q}{2} \bigg ]
 \tion
 Since  $f_y$ and $f_z$ are bounded we have $\ti \e | \ti \Gamma^{s,X_s}_r|^q + \ti
 \e  |\ti \Gamma^{t,x}_r |^q \le  C(f,T,q). $
 Similar to \eqref{F Lipschitz-new}, since $f_x, f_y,f_z$ are Lipschitz continuous w.r.t.~the space variables,  
 \equa
 |f_x(\ti \Theta^{s,X_s}_r)  - f_x(\ti \Theta^{t,x}_r) |&=& |f_x(r, \ti X_r^{s,X_s}, u(r,  \ti X_r^{s,X_s}), \sigma(r,  \ti X_r^{s,X_s}) u_x(r,  \ti X_r^{s,X_s}) ) \\
&& \quad \quad \quad  \quad \quad \quad  - f_x(r, \ti X_r^{t,x}, u(r,  \ti X_r^{t,x}), \sigma(r,  \ti X_r^{t,x}) u_x(r,  \ti X_r^{t,x}) )| \\
&\le &  C(c^{2,3}_{\ref{thm1}}, \sigma, f, T) (1+|\ti X_r^{s,X_s}|^{p_0+1}+|\ti X_r^{t,x}|^{p_0+1}) \frac{|\ti X_r^{s,X_s}-\ti X_r^{t,x}|}{(T-r)^{\frac{1}{2}}},
 \tion
so that  Lemma \ref{X-Lemma} yields  
$$\ti \e   |f_x(\ti \Theta^{s,X_s}_r)  -
f_x(\ti \Theta^{t,x}_r) |^q \le   C(c_{\ref{thm1}}^{2,3}, b,\sigma, f, T,p_0,q) (1+|X_s|^{p_0+1}+|x|^{p_0+1})^q  \frac{ |X_s-x|^q + |s-t|^\frac{q}{2}}{ (T-r)^{\frac{1}{2}}}.$$ 
The same holds for  $|f_y(\ti \Theta^{s,X_s}_r)  - f_y(\ti \Theta^{t,x}_r) |$ and  $|f_z(\ti \Theta^{s,X_s}_r)  - f_z(\ti \Theta^{t,x}_r) |.$ Applying these inequalities  and Gronwall's lemma, we arrive at 
 \equa
  \| \ti \e [\ti \Gamma^{s,X_s}_T -\ti \Gamma^{t,x}_T] \|_p &\le&   C(c_{\ref{thm1}}^{2,3}, b,\sigma,f,g, T,p_0,p) \Psi(x) |s-t|^\frac{1}{2} 
\tion
for $p> 0.$

For $J_2\le  C (t-s)$ it is enough to realise that the integrand is bounded. The estimate for $J_3$ follows similarly to that of $J_1.$
\end{proof}

\subsection{Properties of the solution to the finite difference equation}
 Recall the definition of $ \mathcal{D}^n_m$ given in \eqref{Dm}.  By  \eqref{Xn},
\equal \label{one-step-for-X}
 X_{t_{m+1}}^{n,t_m,x} = x+   h b(t_{m+1},x)  + \sqrt{h} \sigma(t_{m+1},x) \ep_{m+1},
\tionl
so that  
\equal \label{T-applied-to-un}T_{_{m+1,\pm}} u^n(t_{m+1}, X_{t_{m+1}}^{n,t_m,x }) = u^n(t_{m+1},  
x+   h b(t_{m+1},x)  \pm \sqrt{h} \sigma(t_{m+1},x)). 
\tionl     
While for the solution to the PDE  \eqref{pde}  one can observe   in   Theorem \ref{thm1} the well-known smoothing property  which implies that $u$ 
is differentiable on $[0,T)\times \rset$ even though $g$ is only H\"older continuous, in following proposition,  for  the solution $u^n$ to the finite difference equation we have to require
from $g$ the  same regularity as we want for $u^n.$  
\begin{prop} \label{u-discrete}
Let Assumption \ref{Xgeneral}  hold and assume that  $u^n$ is a solution of 
\equal \label{discrete-pde}
&& \hspace{-4em}u^n(t_m,x) -h f(t_{m+1}, x,u^n(t_m,x), \mathcal{D}^n_{m+1} u^n(t_{m+1},X_{t_{m+1}}^{n,t_m,x})) \notag \\&=&
 \half [T_{_{m+1,+}}  u^n(t_{m+1}, X_{t_{m+1}}^{n,t_m,x }) + T_{_{m+1,-}} u^n(t_{m+1}, X_{t_{m+1}}^{n,t_m,x })], \quad  m = 0, \dots, n-1,
 \tionl 
with terminal condition $u^n(t_n,x)= g(x).$
Then, for sufficiently small $h,$ the map   $x \mapsto u^n(t_m,x)$  is $C^2,$  and    it holds 
$$
|u^n(t_m,x)| +  |u_x^n(t_m,x)| \le C_{u^n\!,1}\, \Psi(x), \quad  |u_{xx}^n(t_m,x)| \le C_{u^n\!,2}\, \Psi^2(x)$$  and 
\equal \label{u-xx-alpha}
|u_{xx}^n(t_m,x) - u_{xx}^n(t_m, \bar x)|  \le C_{u^n\!,3} \,(1+|x|^{6p_0+7} +
|\bar x|^{6p_0+7})|x-\bar x|^\alpha,
\tionl
 uniformly in $ m=0,\dots,n-1$. The constants $C_{u^n\!,1}$, $C_{u^n\!,2}$ and
 $C_{u^n\!,3}$ depend on the bounds of $f,g,b,\sigma$ and their derivatives and on $T$ and $p_0$.
\end{prop}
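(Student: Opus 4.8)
The plan is to prove all four assertions simultaneously by backward induction on $m$, running from the terminal index $m=n$ down to $m=0$, combining the finite difference equation \eqref{discrete-pde} with the probabilistic (variational) representation of its solution. The base case $m=n$ is immediate: by Assumption \ref{Xgeneral} the terminal value $u^n(t_n,\cdot)=g$ is $C^2$, one has $|g|+|g'|\le C\Psi$ and $|g''|\le\Psi$ by \eqref{Psi}, and $g''$ is locally $\alpha$-H\"older with a bound of the form \eqref{u-xx-alpha} (with a much smaller polynomial power). For the inductive step I first secure existence and $C^2$-regularity of $x\mapsto u^n(t_m,x)$: recalling \eqref{one-step-for-X}--\eqref{T-applied-to-un}, equation \eqref{discrete-pde} reads $v=\Phi(x)+h\,f(t_{m+1},x,v,D(x))$ with $v=u^n(t_m,x)$, where $\Phi(x)=\half[T_{_{m+1,+}}u^n+T_{_{m+1,-}}u^n]$ and $D(x)=\cD^n_{m+1}u^n(t_{m+1},X^{n,t_m,x}_{t_{m+1}})$ are built from $u^n(t_{m+1},\cdot)$ composed with the $C^2$ map $x\mapsto x_\pm:=x+hb\pm\sqrt h\sigma$. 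For $hL_f<1$ the map $v\mapsto\Phi+hf(\dots)$ is a contraction, and since $\partial_v(v-\Phi-hf)=1-hf_y$ is bounded away from $0$, the implicit function theorem yields $u^n(t_m,\cdot)\in C^2$, with $u^n_{xx}(t_m,\cdot)$ inheriting $\alpha$-H\"older continuity from $u^n_{xx}(t_{m+1},\cdot)$.

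Then I tackle the quantitative bounds by differentiating the relation in $x$. The first derivative satisfies $u^n_x(t_m,x)(1-hf_y)=\Phi_x+h(f_x+f_zD_x)$; since $D_x$ is, up to an $O(\sqrt h)$ correction, an average of $u^n_x(t_{m+1},x_\pm)$, and $\Phi_x$ is the transport term $\half[u^n_x(t_{m+1},x_+)a_++u^n_x(t_{m+1},x_-)a_-]$ with $a_\pm=1+hb_x\pm\sqrt h\sigma_x$, one obtains a recursion of the form $|u^n_x(t_m,x)|\le(1+Ch)\,\mathrm{avg}_\pm|u^n_x(t_{m+1},x_\pm)|+Ch\Psi(x)$; discrete Gronwall over the $\le n$ steps gives $|u^n_x|\le C_{u^n,1}\Psi$. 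Differentiating once more produces $u^n_{xx}(t_m,x)(1-hf_y)=\Phi_{xx}+[\text{products of first derivatives}]+h f_zD_{xx}$, in which $\Phi_{xx}$ again contains the transport average of $u^n_{xx}(t_{m+1},x_\pm)$ with coefficient $1+O(h)$, the product terms are controlled by $C\Psi^2$ (here the $z$-slot $D\approx\sigma u^n_x$ contributes the extra factor $\Psi$, so $f$'s second derivatives times squared first variations yield $\Psi^2$), and discrete Gronwall delivers $|u^n_{xx}|\le C_{u^n,2}\Psi^2$.

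The same backward recursion, applied to the increment $u^n_{xx}(t_m,x)-u^n_{xx}(t_m,\bar x)$, yields the H\"older estimate \eqref{u-xx-alpha}: the transport part propagates the H\"older modulus of $u^n_{xx}(t_{m+1},\cdot)$ with multiplier $1+O(h)$, the terminal contribution rests on $g''\in C^\alpha$ from Assumption \ref{Xgeneral}, and the remaining increments are estimated through the Lipschitz continuity of the second derivatives of $b,\sigma,f$ together with the flow-difference moment bounds $\|X^{n,t_m,x}_{t_\ell}-X^{n,t_m,\bar x}_{t_\ell}\|_p$ from Lemma \ref{X-Lemma}; the accumulation of $\Psi$-type growth factors through these products is precisely what forces the large polynomial power $6p_0+7$. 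Throughout, it is cleaner to read these estimates off the variational BSDE representation (the discrete analogues of \eqref{nabla-bsde-variational}--\eqref{fraction-computed}, with the discrete adjoint process), following the technique for L\'evy-driven BSDEs in \cite{GL16}.

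I expect the H\"older propagation to be the main obstacle. The difficulty is the term $h f_zD_{xx}$: since $D_{xx}$ contains the difference quotient $\frac{u^n_{xx}(t_{m+1},x_+)-u^n_{xx}(t_{m+1},x_-)}{2\sqrt h}$, bounding it naively by the $\alpha$-H\"older modulus costs a factor $h^{(\alpha-1)/2}$, which diverges as $h\to0$; summed over $n\sim T/h$ steps this would destroy the estimate. The point is that the $h$-prefactor leaves $h^{(\alpha+1)/2}$, and --- crucially --- in the variational representation this high-frequency contribution enters only through the martingale (orthogonal) part, so that after taking conditional expectations and invoking the discrete isometry it does not accumulate additively but is carried by the stable multiplier $1+O(h)$. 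This is what keeps the H\"older constant $H_m$ bounded by $e^{CT}$ times the H\"older constant of $g''$, uniformly in $m$ and $n$, and is the crux of the whole proposition.
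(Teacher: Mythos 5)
Your Step~1 (existence and $C^2$-regularity via the implicit relation, for $h$ small so that $1-hf_y$ is invertible) is essentially the paper's Step~1, and your pointwise transport recursions for $|u^n_x|$ and $|u^n_{xx}|$ can be made rigorous, provided you actually perform the recombination you only gesture at: in the recursion for $w=u^n_{xx}(t_{m+1},\cdot)$ the dangerous difference quotient coming from $hf_z D_{xx}$ merges with the symmetric average, since $\half\bigl[w(x_+)a_+^2+w(x_-)a_-^2\bigr]+\sqrt h\,f_z\,\tfrac{w(x_+)a_+^2-w(x_-)a_-^2}{2}$ is a tilted average with weights $\tfrac{1\pm\sqrt h f_z}{2}\ge 0$ (a discrete Girsanov tilt, the analogue of the adjoint process $\Gamma$ in \eqref{fraction-computed}) --- this, not "the martingale part plus the discrete isometry", is the correct stabilization mechanism. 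The paper instead obtains all growth bounds probabilistically, from $L_p$ a priori estimates for the BSDEs \eqref{level0}, \eqref{level1} and \eqref{level2} satisfied by $Y^n,\partial_xY^n,\partial^2_xY^n$ (Lemma \ref{polynomial-estimates}, proved via an It\^o formula for $|\sY|^p$ following Kruse--Popier and Yao, not \cite{GL16}).

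The genuine gap is in the H\"older propagation \eqref{u-xx-alpha}, which you yourself flag as the crux. The tilt argument handles only the \emph{diagonal} part of the recursion; but \eqref{u-xx-alpha} requires differencing the recursion at $x$ and at $\bar x$, and then cross terms such as $h\bigl[f_z(\cdot,x)-f_z(\cdot,\bar x)\bigr]D_{xx}(\bar x)$ survive and cannot be absorbed into any tilted average, because the weights at $x$ and $\bar x$ differ. If you bound $D_{xx}(\bar x)$ by the H\"older modulus $H_{m+1}$ of $u^n_{xx}(t_{m+1},\cdot)$, each step contributes $C\,H_{m+1}\,h^{(\alpha+1)/2}|x-\bar x|$ (up to polynomial weights), and summing over $n=T/h$ steps leaves $C\,H\,h^{(\alpha-1)/2}|x-\bar x|$, which is \emph{not} dominated by $|x-\bar x|^\alpha$ once $|x-\bar x|\gg\sqrt h$; so your recursion for $H_m$ does not close, and no parabolic smoothing is available to rescue a pointwise bound on the third difference quotient (the paper stresses that for \eqref{discrete-pde} one must require of $g$ the same regularity one wants for $u^n$). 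The paper's proof supplies precisely the missing ingredient: it never bounds $\partial^2_x Z^n$ pointwise, but proves the integrated energy estimates of Lemma \ref{lem5} --- in particular \eqref{eq29}, $\e\bigl(\int_{(t_m,T]}|\partial^2_xZ^{n,t_m,x}_{s-}|^2d[B^n]_s\bigr)^{p/2}\le C\Psi^{4p}(x)$ uniformly in $h$, together with \eqref{eq27} and \eqref{eq28} --- so that the cross term is controlled by Cauchy--Schwarz in the form $\e\bigl(\int_{(t_m,T]}|(f_3^{t_m,x}(s-)-f_3^{t_m,\bar x}(s-))\,\partial^2_xZ^{n,t_m,x}_{s-}|\,d[B^n]_s\bigr)^2\le C(\Psi^{12}(x)+\Psi^{12}(\bar x))(1+|x|^2+|\bar x|^2)|x-\bar x|^{2\alpha}$, with no $h^{(\alpha-1)/2}$ loss; it is these products (and the trick $|x-\bar x|^2\le C(1+|x|^2+|\bar x|^2)|x-\bar x|^{2\alpha}$), rather than the flow-difference moments you invoke, that produce the exponent $6p_0+7$. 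Without such an energy bound on the second-derivative martingale integrand, or an equivalent substitute, your induction does not yield \eqref{u-xx-alpha}.
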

\begin{proof}
{\bf Step 1.}
From \eqref{discrete-pde}, since $g$ is $C^2$ and $f_y$ is bounded, for sufficiently small $h$ we conclude   by  induction (backwards in time)  that  $u^n_x(t_m,x)$ exists for $m=0,...,n-1,$ and that it holds
\equa
u_x^n(t_m,x)&=&h f_x(t_{m+1}, x,u^n(t_m,x), \mathcal{D}^n_{m+1} u^n(t_{m+1},X_{t_{m+1}}^{n,t_m,x})) \\
&&+h f_y(t_{m+1}, x,u^n(t_m,x), \mathcal{D}^n_{m+1} u^n(t_{m+1},X_{t_{m+1}}^{n,t_m,x})) \, u_x^n(t_m,x) \\
&&+h f_z(t_{m+1}, x,u^n(t_m,x), \mathcal{D}^n_{m+1} u^n(t_{m+1},X_{t_{m+1}}^{n,t_m,x})) \, \partial_x\mathcal{D}^n_{m+1} u^n(t_{m+1},X_{t_{m+1}}^{n,t_m,x}) \\
&& +\tfrac{1}{2}\Big ( \partial_xT_{_{m+1,+}}  u^n(t_{m+1}, X_{t_{m+1}}^{n,t_m,x }) + \partial_xT_{_{m+1,-}} u^n(t_{m+1}, X_{t_{m+1}}^{n,t_m,x} )\Big).
 \tion
Similarly one can show that $u^n_{xx}(t_m,x)$ exists and  solves the derivative of the previous equation. \bigskip

{\bf Step 2.} 
As stated in the proof of Proposition \ref{discreteZand-wrongZdifference}, the finite difference equation  \eqref{discrete-pde} is the associated equation to   \eqref{discrete-BSDE-sums}
in the sense that we have the representations \eqref{representations}.
We will use that  $u^n(t_m,x) =Y_{t_m}^{n,t_m,x}$ and exploit the BSDE
\equal \label{level0}
  Y_{t_m}^{n,t_m,x}\!\!\!&=&\!\!\!  g(X_T^{n,t_m,x}) +\int_{(t_m,T]} f(s,X^{n,t_m,x}_{s-},Y^{n,t_m,x}_{s-},Z^{n,t_m,x}_{s-})d[B^n]_s  \notag \\ && \hspace{15em} - \int_{(t_m,T]} Z^{n,t_m,x} _{s-} dB^n_s, 
  \tionl
where we will drop the superscript   $t_m,x$ from now on. 
For   $u^n_x(t_m,x)$  we will  consider
\equal \label{level1}
 \nabla Y^n_{t_m}:= \partial_x Y_{t_m}^{n} &=& g'(X_T^{n}) \partial_x X_T^{n}  +\int_{(t_m,T]}  f_x \partial_x  X_{s-}^{n} + f_y   \partial_x Y_{s-}^{n}  + f_z \partial_x Z_{s-}^{n}d[B^n]_s \notag \\
 && \hspace{15em} -\int_{(t_m,T]} \partial_x Z_{s-}^{n} dB^n_s. 
\tionl 
Similarly as in the proof of  \cite[Theorem 3.1]{MaZhang} the BSDE  \eqref{level1}  can be derived from \eqref{level0} as a limit of  difference quotients w.r.t.~$x.$
 Notice that the generator of  \eqref{level1} is random but has the same Lipschitz constant and linear growth bound as $f.$  
 Assumption  \ref{Xgeneral} allows us to find a $p_0\ge 0$ and a $K>0$ such that $$|g(x)| +|g'(x)|+|g''(x)| \le K(1+|x|^{p_0+1}) =\Psi(x).$$  In order to  get  estimates simultaneously for   \eqref{level0} and \eqref{level1}
 we show the following lemma. 
\begin{lemme} \label{polynomial-estimates}
We fix $n$ and  assume a BSDE
\equal \label{BSDE-for-both}
  \sY_{t_k}&=&  \xi^n +\int_{(t_k,T]}  {\sf f}(s,\sX_{s-},\sY_{s-},\sZ_{s-})d[B^n]_s - \int_{(t_k,T]} \sZ _{s-} dB^n_s, \quad m\le k\le n, 
  \tionl
with  $\xi^n =  g(X_T^{n,t_m,x}) $  or   $\xi^n =  g'(X_T^{n,t_m,x}) \partial_x X_T^{n,t_m,x} $  and $\sX_s:= X_s^{n,t_m,x}$ or  
$\sX_s:= \partial_x X_s^{n,t_m,x}$ such that ${\sf f}:\Omega \times [0,T] \times \R^3 \to \R $ is measurable  and satisfies 
\equal \label{f-growth}
  |{\sf f}(\omega,t,x,y,z)  - {\sff}(\omega,t,x',y',z')  |& \le &L_f ( |x-x'| + |y-y'| + |z-z'|), \notag \\
  |{\sf f}(\omega,t,x,y,z) |& \le& (K_f+L_f) (1+ |x| + |y| + |z|).
  \tionl
Then for any $p \ge 2,$ 
\begin{enumerate}[(i)]
\item \label{polynomial-a-priori} $ \e |\sY_{t_k}|^p +  \tfrac{\gamma_p}{4}   \e \int_{(t_k,T]}   |\sY_{s-} |^{p-2}  |\sZ_{s-}|^2 d[B^n]_s \le  C\Psi^p(x),$ \quad 
for   \,\,$k=m,...,n$ \,\, and some $\gamma_p>0,$
\item  \label{supY}   $ \e  \sup_{t_m < s \le T}|\sY_{s-}|^p  \le C \Psi^p(x), $
\item  \label{Z-power} $ \e \Big ( \int_{(t_m,T]}  |\sZ_{s-}|^2 d[B^n]_s \Big)^{\frac{p}{2}} \le   C\Psi^p(x),$
\end{enumerate}
for some constant $ C=C(b,\sigma, f,g,T,p,p_0)$.
\end{lemme}
\begin{proof}
\eqref{polynomial-a-priori}
 By It\^o's formula   (see  \cite[Theorem 4.57]{Jacod/Shiryaev})  we get for $p\ge 2$
\equal \label{ito-of-pth-power}
|\sY_{t_k}|^p& = &  | \xi^n|^p - p  \int_{(t_k,T]} \sY_{s-} |\sY_{s-} |^{p-2} \sZ_{s-} dB^n_s  + p\int_{(t_k,T]} \sY_{s-} |\sY_{s-} |^{p-2}   \sff(s,\sX_{s-},\sY_{s-}, \sZ_{s-})d[B^n]_s \notag\\
&& - \sum_{s \in (t_k,T]}  [|\sY_s|^p  - |\sY_{s-}|^p - p \sY_{s-} |\sY_{s-} |^{p-2}  (\sY_s-\sY_{s-})].
\tionl 
Following the proof of \cite[Proposition 2]{KrusePopier}  (which is carried out there in the  L\'evy process  setting but can be done also for martingales  with jumps, like $B^n$) we can use the estimate
\equa 
 - \sum_{s \in (t_k,T]} [ |\sY_s|^p  - |\sY_{s-}|^p - p \sY_{s-} |\sY_{s-} |^{p-2}  (\sY_s-\sY_{s-})] 
\le - \gamma_p   \sum_{s \in (t_k,T]}   |\sY_{s-}|^{p-2}  (\sY_s-\sY_{s-})^2 
\tion
where $\gamma_p >0$ is computed in  \cite[Lemma A4]{Yao}. Since $$ \sY_{t_{\ell+1}} - \sY_{{t_{\ell+1}}-}=\sff(t_{\ell+1},\sX_{t_\ell},\sY_{t_\ell}, \sZ_{t_\ell})h 
- \sZ_{t_\ell} \sqrt{h} \eps_{\ell+1}$$
 we have
\equa 
&&\less - \sum_{s \in (t_k,T]} [ |\sY_s|^p  - |\sY_{s-}|^p - p \sY_{s-} |\sY_{s-} |^{p-2}  (\sY_s-\sY_{s-})] \\
&\le&   - \gamma_p \,\sum_{\ell=k}^{n-1}   |\sY_{t_\ell} |^{p-2} \, \Big (\sff(t_{\ell+1},\sX_{t_\ell},\sY_{t_\ell}, \sZ_{t_\ell})h - \sZ_{t_\ell} \sqrt{h} \eps_{\ell+1} \Big)^2  \\
&=& -  \gamma_p \,  h \int_{(t_k,T]}   |\sY_{s-} |^{p-2} \,  \sff^2(s,\sX_{s-},\sY_{s-},\sZ_{s-})d[B^n]_s   - \gamma_p     \int_{(t_k,T]}   |\sY_{s-} |^{p-2}  |\sZ_{s-}|^2 d[B^n]_s  \\
&&   +  2 \gamma_p  \,    \int_{(t_k,T]}    |\sY_{s-} |^{p-2} \, \sff(s,\sX_{s-},\sY_{s-}, \sZ_{s-})  \sZ_{s-}(B^n_s-B^n_{s-})  d[B^n]_s.
\tion
  Hence we get from \eqref{ito-of-pth-power}
\equa
 |\sY_{t_k}|^p
&\le &| \xi^n|^p-  p  \int_{(t_k,T]} \sY_{s-} |\sY_{s-} |^{p-2} \sZ_{s-} dB^n_s  + p\int_{(t_k,T]} \sY_{s-} |\sY_{s-} |^{p-2} \,  \sff(s,\sX_{s-},\sY_{s-}, \sZ_{s-})d[B^n]_s  \notag\\
&&  - \gamma_p     \int_{(t_k,T]}   |\sY_{s-} |^{p-2} \, |\sZ_{s-}|^2 d[B^n]_s  \\
&&   +  2 \gamma_p  \,    \int_{(t_k,T]}    |\sY_{s-} |^{p-2} \, \sff(s,\sX_{s-},\sY_{s-}, \sZ_{s-})  \sZ_{s-}(B^n_s-B^n_{s-})  d[B^n]_s.
\tion
 From  Young's inequality and \eqref{f-growth} we conclude that there is a $ c'= c'(p,K_f,L_f, \gamma_p)>0$ such that
 \equa
 p|\sY_{s-} |^{p-1}\, |\sff(s,\sX_{s-},\sY_{s-},\sZ_{s-})| 
\le \tfrac{\gamma_p}{4} |\sY_{s-} |^{p-2} \, |\sZ_{s-}|^2 +  c'(1+| \sX_{s-}|^p   +   |\sY_{s-} |^p),    
\tion
and  for   $ \sqrt{h}   < \tfrac{1}{8  (L_f + K_f)}$  we find a $c'' =c''(p, L_f, K_f, \gamma_p )>0$ such that 
$$2 \gamma_p  \, \sqrt{h}    |\sY_{s-} |^{p-2} \,  |\sff(s,\sX_{s-},\sY_{s-},\sZ_{s-})|  |  \sZ_{s-}| \le  \tfrac{ \gamma_p}{4}  |\sY_{s-} |^{p-2} \, |  \sZ_{s-}|^2+ c'' \,(1+|\sX_{s-}|^p +|\sY_{s-}|^p).$$
Then for $c=c'+c''$ we have
\equal \label{Y-p-estimate}
 |\sY_{t_k}|^p
&\le &| \xi^n|^p - p  \int_{(t_k,T]} \sY_{s-} |\sY_{s-} |^{p-2}\, \sZ_{s-} dB^n_s   + c\int_{(t_k,T]} 1+| \sX_{s-}|^p   +   |\sY_{s-} |^p d[B^n]_s \notag\\
&&- \tfrac{\gamma_p}{2}     \int_{(t_k,T]}   |\sY_{s-} |^{p-2} \, |\sZ_{s-}|^2 d[B^n]_s. 
\tionl
By standard methods, approximating  the  terminal condition and the generator by bounded functions,  it follows  that for any $a>0$    
$$\e \sup_{t_k \le s\le T} |\sY_s|^a < \infty \quad \text{ and } \quad \e \left (\int_{(t_k,T]} |\sZ_{s-}|^2 d[B^n]_s \right)^{\frac{a}{2}} < \infty.$$ 
Hence $ \int_{(t_k,T]} \sY_{s-} |\sY_{s-} |^{p-2} \sZ_{s-} dB^n_s$ has expectation zero.  Taking  the expectation in \eqref{Y-p-estimate} yields 
\equal \label{apriori}
 \e |\sY_{t_k}|^p +  \tfrac{\gamma_p}{2}   \e \int_{(t_k,T]}  \!\! |\sY_{s-}|^{p-2}  |\sZ_{s-}|^2 d[B^n]_s  
\le  \e| \xi^n|^p  + c \e \int_{(t_k,T]} \!\! 1+| \sX_{s-}|^p   +   |\sY_{s-} |^p d[B^n]_s. 
\tionl
Since  $\e| \xi^n|^p$  and $ \e \int_{(t_k,T]} 1+| \sX_{s-}|^p d[B^n]_s$
  are polynomially bounded in $x$, Gronwall's lemma gives
\equa
&& \less  \| \sY_{t_k}\|_p \le   C(b,\sigma, f,g, T,p,p_0) (1 + |x|^{ p_0+1}), \quad  k=m,...,n, 
\tion
and inserting this into \eqref{apriori} yields
\equa
  \Big (\e \int_{(t_k,T]}   |\sY_{s-} |^{p-2}  |\sZ_{s-}|^2 d[B^n]_s \Big )^\frac{1}{p}  \le   C(b,\sigma,f,g,T,p,p_0) (1 + |x|^{ p_0+1}), \quad k=m,...,n-1.
\tion

\eqref{supY}
 From \eqref{Y-p-estimate} we derive by the  inequality of BDG and Young's inequality that for $t_m \le t_k \le T$
\equa
&& \les \e  \sup_{t_k < s \le T}|\sY_{s-}|^p \\
&\le &   \e| \xi^n|^p  + C(p)  \e \left ( \int_{(t_k,T]} |\sY_{s-} |^{2p-2} |\sZ_{s-} |^2 d[B^n]_s\right )^\half  +c\e \int_{(t_k,T]}1+| \sX_{s-}|^p   +   |\sY_{s-} |^p d[B^n]_s \notag\\
&\le &  \e| \xi^n|^p  +  c \e \int_{(t_k,T]} 1+| \sX_{s-}|^p  d[B^n]_s   +   C(p)  \e  \left [ \sup_{t_k < s \le T}  |\sY_{s-} |^{\frac{p}{2}} \left ( \int_{(t_k,T]} |\sY_{s-} |^{p-2} |\sZ_{s-} |^2 d[B^n]_s\right )^\half  \right ]\notag\\
&& + c \e \int_{(t_k,T]}   |\sY_{s-} |^pd[B^n]_s \\
&\le &   \e| \xi^n|^p  +  c \e \int_{(t_k,T]} \!\!\! 1+| \sX_{s-}|^p d[B^n]_s   +   C(p)  \e  \int_{(t_k,T]} |\sY_{s-} |^{p-2} |\sZ_{s-} |^2 d[B^n]_s \\ 
&&  + \e \sup_{t_k < s \le T}  |\sY_{s-} |^p( \tfrac{1}{4} + c (T-t_k)).   
\tion
 We assume that $h$ is sufficiently small so that we find a $t_k$ with  $c (T-t_k) < \tfrac{1}{4}.$  We rearrange the inequality to have  $ \e \sup_{t_k < s \le T}  |\sY_{s-} |^p$ on the l.h.s., and  from  \eqref{polynomial-a-priori} we conclude  that
\equa
 \e  \sup_{t_k < s \le T}|\sY_{s-}|^p &\le&   2 \e | \xi^n|^p  +  2c \e \int_{(t_k,T]} \!\!\! 1+| \sX_{s-}|^p d[B^n]_s   +   2C(p)  \e  \int_{(t_k,T]} |\sY_{s-} |^{p-2} |\sZ_{s-} |^2 d[B^n]_s \\ 
 &\le &  C(b,\sigma, f,g, T,p,p_0) (1+|x|^{(p_0+1)p}).
 \tion

Now we may repeat the above step for  $\e  \sup_{t_\ell < s \le t_k}|\sY_{s-}|^p$ with $c (t_k-t_\ell) < \tfrac{1}{4}$  and $\xi^n=\sY_T$ replaced by $\sY_{t_k},$ and continue doing so until we eventually  get assertion
 \eqref{supY}. \\
 \eqref{Z-power}
We proceed from \eqref{BSDE-for-both},
\equa
  \sup_{k\le \ell\le n} \Big | \int_{(t_\ell,T]}  \sZ_{s-} dB^n_s \Big|^p
  \le C(p) \bigg (  | \xi^n|^p +  \sup_{k\le \ell \le n} |\sY_{t_\ell}|^p +
  \Big (\int_{(t_k,T]} | {\sf f}(s,\sX_{s-},\sY_{s-}, \sZ_{s-}) | \,d[B^n]_s\Big )^p \bigg),
\tion
so that by  \eqref{f-growth} and  the inequalities of BDG and H\"older we have that
\equa
&& \les  \e \Big ( \int_{(t_k,T]}  |\sZ_{s-}|^2 d[B^n]_s \Big)^{\frac{p}{2}} \\
&\le& C(p) \Big ( \e | \xi^n|^p +  \e \sup_{k\le \ell\le n} | \sY_{t_\ell}|^p \Big ) +C(p,L_f,K_f) \e \left (\int_{(t_k,T]}  1+|\sX_{s-}|+| \sY_{s-}|d[B^n]_s\right )^p \\
 &&+C(p,L_f,K_f) (T-t_k)^{\frac{p}{2}} \e \left (\int_{(t_k,T]}   |\sZ_{s-}|^2d[B^n]_s\right )^{\frac{p}{2}}.
 \tion
Hence  for $C(p,L_f,K_f)  (T-t_k)^{\frac{p}{2}} <\half$ we derive from
assertion  \eqref{supY}  and from the growth properties of the other terms
that
\begin{align}\label{eq26}
 \e \Big ( \int_{(t_k,T]}  |{\sZ_{s-}}|^2 d[B^n]_s \Big)^{\frac{p}{2}} \le  C(b,\sigma, f,g,T,p,p_0) (1+|x|^{(p_0+1)p}).
\end{align}
Repeating this procedure eventually yields \eqref{Z-power}.
\end{proof}
{\bf  Step 3.}
Applying Lemma  \ref{polynomial-estimates}  to \eqref{level0} and
\eqref{level1}  we see that for all $m =0,...,n$ we have
$$ |u^n(t_m, x)| = |Y_{t_m}^{n, t_m,x}  | = ( \e (Y_{t_m}^{n, t_m,x} )^2)^\frac{1}{2} \le  C(b,\sigma, f,g,T,p_0) (1+|x|^{p_0+1})$$ 
and 
\equal \label{ux-bound}|u^n_x(t_m, x)|   = ( \e ( \partial_xY_{t_m}^{n, t_m,x})^2 )^\frac{1}{2} \le  C(b,\sigma, f,g,T,p_0) (1+|x|^{p_0+1}).
\tionl

Our  next aim is to show that  $u^n_{xx}(t_m,x)$  is locally Lipschitz in $x.$  We first show that   $u^n_{xx}(t_m,x)$  has polynomial growth.
We introduce the BSDE which describes $u^n_{xx}(t_m,x)$ and  denote  for simplicity  
$$f(t,x_1,x_2,x_3):= f(t,x,y,z)   \quad  \text{and } \quad D^a :=  \partial_{x_1}^{i_1} 
\partial_{x_2}^{i_2}\partial_{x_3}^{i_3}  \quad \text{with} \quad a :=(i_1,i_2,i_3)$$ and consider
\equal \label{level2}
 \partial_x^2 Y_{t_m}^{n}&=&g''(X_T^{n}) (  \partial_x X_T^{n})^2 + g'(X_T^{n}) \partial_x^2  X_T^{n}  \notag\\
&& +\int_{(t_m,T]}  \sum_{\substack{a \in \{0,1,2\}^3 \\ i_1+i_2+i_3=2}}   (D^a f)(s,X_{s-}^{n},Y^{n}_{s-},Z^{n}_{s-}) (\partial_x   X_{s-}^{n})^{i_1}  (\partial_x Y^{n}_{s-})^{i_2}    (\partial_x Z^{n}_{s-})^{i_3}   d[B^n]_s  \notag  \\
&&+  \int_{(t_m,T]}  \sum_{\substack{a  \in \{0, 1\}^3\\ i_1+i_2+i_3=1}}   (D^a f)(s,X_{s-}^{n},Y^{n}_{s-},Z^{n}_{s-}) (\partial_x^2  X_{s-}^{n})^{i_1}  (\partial_x^2 Y^{n}_{s-})^{i_2}    (\partial_x^2 Z^{n}_{s-})^{i_3}   d[B^n]_s    \notag \\
&&-\int_{(t_m,T]} \partial_x^2  Z^n_{s-} dB^n_s.
\tionl 
We denote the generator of this BSDE by $\hat f$ and notice that it  is of the structure   
$$ \hat f(\omega, t,x,y,z) = f_0(\omega, t) + f_1(\omega, t) x+ f_2(\omega, t) y+ f_3(\omega, t) z.
$$
 Here  $f_0(\omega, t)$ denotes the integrand of the first integral on the r.h.s~of \eqref{level2}, and from the previous results one concludes that 
 $\e  (\int_{(t_m,T]}  |f_0(s-)| d[B^n]_s)^p < \infty.$ The functions   $f_1(t) = (D^{(1,0,0)} f)(t, \cdot) = (\partial_x f)(t, \cdot) $ as well as  $f_2(t)  = (\partial_y f)(t, \cdot)$
 and $f_3(t)  = (\partial_z f)(t, \cdot)$ are  bounded by our assumptions. We put
$$\hat\xi^n :=  g''(X_T^{n}) (  \partial_x X_T^{n})^2 + g'(X_T^{n}) \partial_x^2  X_T^{n}.$$ 
Denoting   the solution by  $(\hat\sY, \hat\sZ)$   we get for $C(f_3 ) (T-t_m) \le \tfrac{1}{2}$ that
\equal \label{aprioriII}
&& \less \e |\hat \sY_{t_m}|^2 +  \half \e \int_{(t_m,T]}    |\hat \sZ_{s-}|^2 d[B^n]_s  \notag\\
&\le&\!\!\!\!\!C \bigg [ \e |\hat \xi^n|^2  +   \e \Big  (\int_{(t_m,T]}  |f_0( s-) |   d[B^n]_s \Big )^2      +  \e \int_{(t_m,T]} |\hat \sX_{s-}|^2   +   |\hat \sY_{s-} |^2 d[B^n]_s \bigg]. 
\tionl
Now we derive the  polynomial growth $\e | \hat \xi^n|^2 \le C \Psi^2(x) $ from the properties of $g'$ and $g''$ and from the fact that  $\e \sup_{t_m< s \le T}   |\partial^j_x   X_{s}^{n}|^p$ is bounded for $j=1,2$ under  our assumptions. Then the estimate $$  \e \Big (\int_{(t_m,T]} | f_0(s-) |   d[B^n]_s \Big )^2   \le C \Psi^4(x)$$ can be derived from Lemma  \ref{polynomial-estimates}\eqref{supY}-\eqref{Z-power},
so that Gronwall's  lemma  implies  
\equal \label{uxx-bound}
|\hat\sY_{t_m}^{t_m,x}|= |u_{xx}(t_m,x)| \le C \Psi^2(x). 
\tionl \bigskip

Finally, to show \eqref{u-xx-alpha}, one uses \eqref{level2}  and derives an inequality as in \eqref{aprioriII} but now for the difference
$\partial_x^2  Y_{t_m}^{n,t_m,x}-\partial_x^2  Y_{t_m}^{n,t_m,\bar x}.$

Before proving it, let us state the following lemma.

  \begin{lemme}\label{lem5}
    Let Assumption \ref{Xgeneral} hold. We have
  \begin{align}
&\left(\e \sup_s | Z^{n,t_m,x}_{s-}- Z^{n,t_m,\bar x}_{s-} |^p\right)^{1/p}
   \le C( \Psi(x)^2 + \Psi(\bar x)^2)|x-\bar x|,\quad  p\ge 2, \label{eq28}\\
 & \e \Big ( \int_{(t_m,T]}    |\partial_x
  Z^{n,t_m,x}_{s-} - \partial_x Z^{n,t_m,\bar x}_{s-}  |^2 d[B^n]_s \Big
  )^\frac{p}{2} \le  C (\Psi^{4p}(x) + \Psi^{4p}(\bar x))|x-\bar x|^p, \quad
  p\ge 2,  \label{eq27}\\
&  \e \left (  \int_{(t_m,T]} |\partial^2_x Z^{n,t_m,x}_{s-}|^{2} d[B^n]_s
  \right )^\frac{p}{2} \le C\Psi^{4p}(x), \quad  p \geq2,\label{eq29}
\end{align}
 for some constant  $C=C(b,\sigma,f,g,T,p,p_0)$.
\end{lemme}

\begin{proof}[ Proof of Lemma \ref{lem5}]  \eqref{eq28}: Introduce $G(t_{k+1}, x) := \mathcal{D}^n_{k+1} u^n(t_{k+1}, X^{n,t_k,x}_{t_{k+1}})$. Using relations \eqref{one-step-for-X}--\eqref{T-applied-to-un} and the bounds \eqref{ux-bound} and \eqref{uxx-bound} for $u^n_x$ and $u^n_{xx}$, respectively, one obtains
	\begin{align*}
	|G(t_{k+1}, x) - G(t_{k+1}, \bar{x})| \leq C(1+|x|^{2(p_0+1)} + |\bar{x}|^{2(p_0+1)})|x-\bar{x}|, \quad x,\bar{x} \in \R,
	\end{align*}
	uniformly in $t_{k+1}$. Since $Z^{n,t_m,x}_{t_k} = \mathcal{D}^n_{k+1} u^n(t_{k+1}, X^{n,t_k, \eta}_{t_{k+1}}) = G(t_{k+1}, \eta)$ where $\eta = X^{n,t_m,x}_{t_k}$, the previous bound yields
	\begin{align*}
	|Z^{n,t_m,x}_{t_k} - Z^{n,t_m, \bar{x}}_{t_k}| \leq C(1+ |X^{n,t_m,x}_{t_{k}}|^{2(p_0+1)} + |X^{n,t_m, \bar{x}}_{t_{k}}|^{2(p_0+1)})|X^{n,t_m,x}_{t_{k}}-X^{n,t_m,\bar{x}}_{t_{k}}| 
	\end{align*}
	uniformly for each $t_m \leq t_k < T$. Inequality \eqref{eq28} then follows by applying the Cauchy-Schwarz inequality and standard $L_p$-estimates for the process $X^n$.
	
  \eqref{eq27}: This can be shown similarly as Lemma
\ref{polynomial-estimates}-\eqref{Z-power}  considering the  BSDE for  the
difference   $\partial_x  Y_{t_m}^{n,t_m,x}-\partial_x  Y_{t_m}^{n,t_m,\bar
  x}$  instead of \eqref{level1} itself. 
  
  \eqref{eq29}: This one gets  repeating again the proof of  Lemma  \ref{polynomial-estimates}-\eqref{Z-power} but now for the BSDE \eqref{level2}.
\end{proof}

By our assumptions we have
$$ \e  |\hat\xi^{n,t_m,x} - \hat\xi^{n,t_m, \bar x}|^2 \le C(\Psi^2(x) + \Psi^2(\bar x))(1+|x|^2+|\bar x|^2) |x-\bar x|^{2\alpha},  $$   
where we use  $|x-\bar x|^2 \le C(1+|x|^2+|\bar x|^2)   |x-\bar x|^{2\alpha}.$ The term  $|x-\bar x|^2$ appears for example in the estimate of  
$(\partial_x X_T^{n,t_m,x })^2 -  (  \partial_x X_T^{n,t_m, \bar x })^2.$  To see that 
$$  \e \Big (\int_{(t_m,T]} | f_0^{t_m,x} (s-) -f_0^{t_m,\bar x}(s-) |   d[B^n]_s \Big )^2   \le C( \Psi^{10}(x) +  \Psi^{10}(\bar x) ) (1+|x|^{2} +|\bar x|^{2})  |x-\bar x|^{2\alpha},$$
we  check the terms with the highest polynomial growth.  For example, we
  have to deal with terms
like $\e \Big  (\!\int_{(t_m,T]}  | Z^{n,t_m,x}_{s-}- Z^{n,t_m,\bar x}_{s-} |
\,|\partial_xZ^{n,t_m,x}_{s-}|^{2}   d[B^n]_s \!\Big )^2\!\!$ and $\e \Big
(\!\int_{(t_m,T]} |\partial_x Z^{n,t_m,x}_{s-}|^2- |\partial_x Z^{n,t_m, \bar
  x}_{s-}|^{2}   d[B^n]_s \Big )^2.$
We bound the first term by using \eqref{eq26} and \eqref{eq28}
 \equa && \less \e \Big  (\int_{(t_m,T]}  | Z^{n,t_m,x}_{s-}- Z^{n,t_m,\bar x}_{s-} | \,|\partial_xZ^{n,t_m,x}_{s-}|^{2}   d[B^n]_s \Big )^2 \\
  &\le& (  \e \sup_s | Z^{n,t_m,x}_{s-}- Z^{n,t_m,\bar x}_{s-} |^4 )^\half  \Big  (\e    \Big  (\int_{(t_m,T]}  |\partial_x Z^{n,t_m,x}_{s-}|^{2}   d[B^n]_s \Big )^4  \Big )^\half\\
   &\le& C( \Psi^4(x) +  \Psi^4(\bar x) )|x-\bar x|^2  \Psi^4(x).
   \tion
  We bound the second term by using \eqref{eq26} and \eqref{eq27}
 \equa && \less \e \Big  (\int_{(t_m,T]} |\partial_x Z^{n,t_m,x}_{s-}|^2- |\partial_x Z^{n,t_m, \bar x}_{s-}|^{2}   d[B^n]_s \Big )^2 \\
  &\le&  C \e  \int_{(t_m,T]} |\partial_x Z^{n,t_m,x}_{s-}|^2+ |\partial_x Z^{n,t_m, \bar x}_{s-}|^{2}   d[B^n]_s   \int_{(t_m,T]} |\partial_x Z^{n,t_m,x}_{s-}- \partial_x Z^{n,t_m, \bar x}_{s-}|^{2}   d[B^n]_s  \\
 &\le& C( \Psi^2(x) +  \Psi^2(\bar x) )  ( \Psi^8(x) +  \Psi^8(\bar x) )|x-\bar x|^2 \\
   &\le&  C( \Psi^{10}(x) +  \Psi^{10}(\bar x) ) (|x|^{2-2\alpha} +|\bar x|^{2-2\alpha})  |x-\bar x|^{2\alpha} \\
   &\le& C( \Psi^{10}(x) +  \Psi^{10}(\bar x) ) (1+|x|^{2} +|\bar x|^{2})  |x-\bar x|^{2\alpha},
 \tion

While all the other terms can be easily estimated using the results we have obtained already, for   
$$\e \Big ( \int_{(t_m,T]}    |(f_3^{t_m,x} (s-)  -f_3^{t_m,\bar x}(s-))\partial^2_x Z^{n,t_m,x}_{s-} |d[B^n]_s  \Big)^2  \le C( \Psi^{12}(x) +  \Psi^{12}(\bar x) ) (1+|x|^{2} +|\bar x|^{2})  |x-\bar x|^{2\alpha}$$ 
we need the bound \eqref{eq29}.

 The result follows then from Gronwall's lemma. 

\begin{rem} \label{F-n-property}
 Under Assumption \ref{Xgeneral}  we conclude that by Proposition \ref{u-discrete} there exists a constant  $C = C(b,\sigma, f,g, T,p,p_0) > 0$  such that
 \equal\label{D-ux-difference}
  |u^n(t_m,x) - u^n(t_m,\bar x) |  &\le &  C( 1+ \Psi(x) + \Psi(\bar x))|x-\bar{x}|,  \notag \\
 |\mathcal{D}^n_{m+1} u^n(t_{m+1},X^{n,t_m,x}_{t_{m+1}})-\mathcal{D}^n_{m+1} u^n(t_{m+1},X^{n,t_m,x}_{t_{m+1}})|  &\le & C( 1+ \Psi^2(x) + \Psi^2(\bar x))|x-\bar{x}|, \notag \\
 |u_x^n(t_m,x) - u_x^n(t_m,\bar x) |  &\le &   C( 1+ \Psi^2(x) + \Psi^2(\bar x))|x-\bar{x}|, \notag \\
 |\partial_x \mathcal{D}^n_{m+1} u^n(t_{m+1}, X^{n,t_m,x}_{t_{m+1}}) - \partial_x \mathcal{D}^n_{m+1} u^n(t_{m+1}, X^{n,t_m,{\bar x}}_{t_{m+1}})| 
 &\leq& C(1+ \hat{\Psi}(x) + \hat{\Psi}(\bar x))|x-\bar{x}|^{\alpha},  \notag \\
  |\partial_x \mathcal{D}^n_{m+1} u^n(t_{m+1}, X^{n,t_m,x}_{t_{m+1}})| &\le & C(1+ \Psi^2(x)),
 \tionl 
uniformly in $m = 0,1, \dots, n-1$, where
\begin{align} \label{def-hat-psi}
\hat{\Psi}(x) := 1+|x|^{6p_0+8}.
\end{align}
 In addition, for $\partial_x F^n(t_{m+1}, x):=\partial_xf(t_{m+1}, x, u^n(t_m, x),  \mathcal{D}^n_{m+1} u^n(t_{m+1},X^{n,t_m,x}_{t_{m+1}}))$ we have 
\begin{align}\label{D-Fx-difference} 
|\partial_x F^n(t_{m+1}, x) - \partial_x F^n(t_{m+1}, {\bar x}) |\leq C(1+ \hat{\Psi}(x) + \hat{\Psi}(\bar x))|x-\bar{x}|^{\alpha}
\end{align}
uniformly in $m = 0,1, \dots, n-1$. The latter inequality follows from the assumption that the partial derivatives of $f$ are bounded and 
 Lipschitz continuous w.r.t.~the spatial variables, from estimates proved in  Proposition \ref{u-discrete} and from those stated in \eqref{D-ux-difference} above.  

From the calculations it can be seen that in general Assumption \ref{Xgeneral} 
can not be weakened if one needs    $ \partial_x F^n(t_{m+1}, x)$   to be locally $\alpha$-H\"older continuous.

\end{rem}

 \end{proof}

\section{Technical results and estimates}  \label{5}
In this section we collect  some facts which are needed for the proofs of our results. We start with properties of the stopping times  used to construct a random walk.
\begin{lemme}[Proposition 11.1 \cite{Walsh}, Lemma A.1 \cite{GLL}]   \label{B-difference}
For all $0 \leq k \leq m \leq n$ and $p > 0$, it holds for $h = \tfrac{T}{n}$ and  $\tau_k$ defined in \eqref{tau-k} that
\begin{enumerate}[(i)]
\item $\e \tau_k  = kh$, \label{tauk-expectation}
\item $\e |\tau_1 |^p \leq  C(p) h^p$, \label{tau1-expectation}
\item $\e | B_{\tau_k} - B_{t_k}|^{2p} \leq  C(p) \e |\tau_k - t_k|^p
  \leq  C(p) (t_k h)^{\frac{p}{2}}. \label{B-difference-estimatep}$
\end{enumerate}
\end{lemme}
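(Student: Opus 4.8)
The plan is to establish the three claims in turn, exploiting that, by the strong Markov property, the Skorohod increments $\tau_j-\tau_{j-1}$ defined in \eqref{tau-k} are i.i.d., each distributed as $\tau_1=\inf\{t>0:|B_t|=\sqrt h\}$, the first exit time of $B$ from $(-\sqrt h,\sqrt h)$; the passage from a sum of i.i.d.\ exit times to the Brownian increment is then made through the Burkholder--Davis--Gundy (BDG) inequality. For (i) I would apply optional stopping to the martingale $B_t^2-t$ at the bounded times $\tau_1\wedge N$: since $|B_{\tau_1\wedge N}|\le\sqrt h$, bounded convergence gives $\e B_{\tau_1\wedge N}^2\to h$, while monotone convergence gives $\e[\tau_1\wedge N]\uparrow\e\tau_1$, whence $\e\tau_1=h<\infty$ and, by summing the $k$ i.i.d.\ increments, $\e\tau_k=kh$. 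For (ii), Brownian scaling $W_s:=h^{-1/2}B_{hs}$ shows $\tau_1\stackrel{d}{=}h\,\sigma$ with $\sigma:=\inf\{s>0:|W_s|=1\}$; since the exit time $\sigma$ from a bounded interval has exponential moments (so $\e\sigma^p<\infty$ for every $p$), we obtain $\e|\tau_1|^p=h^p\,\e\sigma^p=:C(p)h^p$.

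For the second inequality in (iii), I would write $\tau_k-t_k=\sum_{j=1}^k\xi_j$ with $\xi_j:=(\tau_j-\tau_{j-1})-h$ i.i.d., centered by (i), and satisfying $\e|\xi_j|^p\le C(p)h^p$ by (ii) for $p\ge2$. The Marcinkiewicz--Zygmund inequality together with Minkowski's inequality in $L^{p/2}$ then yields $\e|\sum_j\xi_j|^p\le C(p)\,\e(\sum_j\xi_j^2)^{p/2}\le C(p)\,k^{p/2}\e|\xi_1|^p\le C(p)(kh)^{p/2}h^{p/2}=C(p)(t_kh)^{p/2}$, using $t_k=kh$. The range $0<p<2$ follows from the case $p=2$ by Lyapunov's inequality, so all $p>0$ are covered.

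For the first inequality in (iii), the idea is that over the interval between $t_k$ and $\tau_k$ the Brownian increment accumulates quadratic variation $|\tau_k-t_k|$. Setting $S:=\tau_k\wedge t_k$ and $T:=\tau_k\vee t_k$ (both stopping times, as $t_k$ is deterministic and $\tau_k$ is a stopping time) and $G_u:=B_{S+u}-B_S$, the strong Markov property makes $G$ a Brownian motion with respect to $\cg_u:=\cf_{S+u}$, and $|B_{\tau_k}-B_{t_k}|=|G_{T-S}|$. Applying BDG to $G$ stopped at $\theta:=T-S$ gives $\e|B_{\tau_k}-B_{t_k}|^{2p}\le\e\sup_{u\le\theta}|G_u|^{2p}\le C(p)\,\e\theta^p=C(p)\,\e|\tau_k-t_k|^p$, which combined with the previous paragraph closes (iii).

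The step demanding the most care, and the main obstacle, is justifying this BDG application: one must verify that $\theta=T-S$ is a stopping time for the filtration $(\cg_u)_u$ of $G$. This holds because for each fixed $u$ both $T$ and $S+u$ are $\cf$-stopping times, so $\{\theta\le u\}=\{T\le S+u\}\in\cf_{T\wedge(S+u)}\subseteq\cf_{S+u}=\cg_u$; the finiteness of $\e\theta^p$ (hence $\theta<\infty$ a.s.), needed to legitimize stopping $G$ at $\theta$, is guaranteed by the already-established second inequality. Everything else reduces to standard moment estimates, and for the routine parts I would, where convenient, simply invoke the cited references \cite{Walsh} and \cite{GLL}.
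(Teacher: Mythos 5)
Your proof is correct, and since the paper itself gives no argument for this lemma (it simply cites Proposition 11.1 of \cite{Walsh} and Lemma A.1 of \cite{GLL}), your chain of reasoning --- optional stopping of $B_t^2-t$ for (i), Brownian scaling together with exponential moments of the unit-interval exit time for (ii), and Marcinkiewicz--Zygmund for the centered i.i.d.\ sum $\tau_k-t_k$ plus BDG for the post-$S$ Brownian motion stopped at $\theta=|\tau_k-t_k|$ for (iii) --- is essentially the standard argument carried out in those references. The one delicate point, that $\theta$ is a stopping time for the filtration $(\cf_{S+u})_u$ so that BDG may legitimately be applied at $\theta$, you verify correctly via $\{T\le S+u\}\in\cf_{T\wedge(S+u)}\subseteq\cf_{S+u}$.
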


The next lemma lists some estimates concerning the diffusion $X$  defined by \eqref{tilde-X} and its  discretization \eqref{X-n-with-tilde}, where we assume that $B$ and $\tilde B$  are connected as in \eqref{independent-B}.

\begin{lemme}\label{X-Lemma} 
 \label{discrete-time-sup}  
Under Assumption \ref{hypo2} on $b$ and $\sigma$ it holds for  $p \geq 2$ that
there exists a constant $C=C(b,\sigma,T,p)  >0$ such that
\begin{enumerate}[(i)]
\item $ \e\big|X^{s,y}_{T} - X^{t,x}_T\big|^p \leq  C ( |y-x|^p + |s-t|^{\frac{p}{2}}), \quad x,y \in \R, \, s,t \in [0,T],     $\label{Xsx-Xty} 
\item $\label{supX} \ti \e  \sup_{\ti \tau_l \wedge t_{m} \le r \le \ti \tau_{l+1} \wedge t_{m}} |\ti X^{t_k,x}_{t_k+r}-  \ti X^{t_k,x}_{t_k+ \ti \tau_l \wedge
    t_{m}}|^{p} \le  C h^{\frac{p}{4}}, \,\, 0\le k\le n, \, 0 \le  l \le n-k-1, \,  0  \leq m \le n-k,$  
\item    $ \e|\nb  X^{s,y}_T - \nb  X^{t,x}_T |^p \le  C( |y-x|^p + |s-t|^{\frac{p}{2}}), \quad x,y \in \R, \, s,t \in [0,T],    $   \label{nablaX-diff} 
\item $  \e \sup_{0 \leq l \leq m} \big|\nabla X^{n,t_k,x}_{t_k+t_l}\big|^{p} \leq C,   \quad 0 \leq k \leq n, \,  0 \le m \le n-k, $ \label{supcX}
\item  $\ti \e\big|\ti X^{t_k,x}_{t_k+ t_m}-\ti \cX^{\tau_k,y}_{\tau_k +\ti
    \tau_m}\big|^p \le C ( |x - y|^p+  h^{\frac{p}{4}}), \quad 0 \leq k \leq n,\, 0  \leq m \le n-k,$  \label{XandcX}
\item  $\ti \e | \nabla \ti X_{t_k+ t_m }^{t_k, x} -    \nabla \ti \cX_{\tau_k
    +\ti \tau_m}^{\tau_k,y}|^p \le C( |x - y |^p+  h^{\frac{p}{4}}), \quad 0 \leq k \leq n, \,  0 \leq m  \le n-k.$  \label{nablaX-nablacX}

\end{enumerate}
\end{lemme}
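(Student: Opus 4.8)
The plan is to establish the six assertions in order of increasing difficulty, the last two carrying the essential $h^{1/4}$ rate. Throughout I would rely on the boundedness of $b,\sigma$ and of their first two space derivatives (Assumption \ref{hypo2}), on the Burkholder--Davis--Gundy (BDG) and H\"older inequalities, on Gronwall's lemma, and on the moment bounds for the Skorohod stopping times collected in Lemma \ref{B-difference}. For \eqref{Xsx-Xty} and \eqref{nablaX-diff} I would use the classical $L_p$-stability of the SDE \eqref{tilde-X}, respectively of the linear variational equation for $\nb \ti X$, with respect to the initial data. The dependence on the initial point is obtained by writing the difference of the two solutions started from $(s,y)$ and $(t,x)$ at a common later time, applying BDG and H\"older to the drift and diffusion terms and closing with Gronwall; for \eqref{nablaX-diff} one additionally invokes the Lipschitz continuity of $b_x,\sigma_x$ to control the coefficient differences. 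The initial-time contribution $|s-t|^{1/2}$ is produced by assuming $s\le t$, estimating the increment of $X^{s,y}$ over $[s,t]$ through the standard bound $\e|X^{s,y}_t-y|^p\le C|t-s|^{p/2}$, and then applying the initial-point estimate from time $t$ onwards.

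For \eqref{supX} I would observe that the range $[\ti\tau_l\wedge t_m,\ti\tau_{l+1}\wedge t_m]$ is contained in $[\ti\tau_l,\ti\tau_{l+1}]$, so the increment of $\ti X^{t_k,x}$ over it splits into a drift part bounded by $\|b\|_\infty(\ti\tau_{l+1}-\ti\tau_l)$ and a stochastic part controlled by BDG through $\e(\ti\tau_{l+1}-\ti\tau_l)^{p/2}$. Since $\e|\ti\tau_{l+1}-\ti\tau_l|^{p/2}=\e|\ti\tau_1|^{p/2}\le C h^{p/2}$ by Lemma \ref{B-difference}\eqref{tau1-expectation}, this yields even the stronger bound $C h^{p/2}$, and a fortiori the stated $C h^{p/4}$. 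Assertion \eqref{supcX} is the discrete counterpart: the process $\nb X^{n,t_k,x}$ solves a linear recursion with bounded coefficients $b_x,\sigma_x$ and increments $\pm\sqrt h$, so a discrete Gronwall argument together with a Doob/BDG estimate for the martingale part delivers the uniform moment bound.

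The core of the lemma is \eqref{XandcX}--\eqref{nablaX-nablacX}. For \eqref{XandcX} I would compare the continuous solution $\ti X^{t_k,x}$ sampled at the deterministic time $t_k+t_m$ with the random-walk scheme $\ti\cX^{\tau_k,y}$ from \eqref{X-n-with-tilde} after $m$ steps, inserting an intermediate Euler--Maruyama scheme $\bar X^{t_k,\cdot}$ driven by $\ti B$ on the deterministic grid $(t_j)$. Writing
\[
\ti X^{t_k,x}_{t_k+t_m}-\ti\cX^{\tau_k,y}_{\tau_k+\ti\tau_m}
=\big(\ti X^{t_k,x}_{t_k+t_m}-\bar X^{t_k,x}_{t_k+t_m}\big)
+\big(\bar X^{t_k,x}_{t_k+t_m}-\bar X^{t_k,y}_{t_k+t_m}\big)
+\big(\bar X^{t_k,y}_{t_k+t_m}-\ti\cX^{\tau_k,y}_{\tau_k+\ti\tau_m}\big),
\]
the first term is the strong Euler error of order $h^{1/2}$, the second is the initial-data stability of order $|x-y|$, and the third compares two schemes that differ only by replacing the Brownian increments $\ti B_{t_j}-\ti B_{t_{j-1}}$ with the Skorohod increments $\ti B_{\ti\tau_j}-\ti B_{\ti\tau_{j-1}}$; its accumulated effect is governed by $\sup_j\|\ti B_{\ti\tau_j}-\ti B_{t_j}\|_p=O((t_j h)^{1/4})=O(h^{1/4})$ from Lemma \ref{B-difference}\eqref{B-difference-estimatep} and Remark \ref{B-n-rate}. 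A discrete Gronwall step then gives $C(|x-y|^p+h^{p/4})$. Estimate \eqref{nablaX-nablacX} follows the same decomposition for the variational processes, whose coefficients are $b_x,\sigma_x$ evaluated along the respective base paths; one bounds the differences using the Lipschitz continuity of $b_x,\sigma_x$, the closeness of the base processes just proved in \eqref{XandcX}, the uniform moment bound \eqref{supcX}, and the same Skorohod rate, again closing with discrete Gronwall.

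The main obstacle is precisely \eqref{XandcX}--\eqref{nablaX-nablacX}: one must align the random stopping-time clock $(\ti\tau_j)$ with the deterministic clock $(t_j)$ while comparing a quantity sampled at the deterministic time $t_k+t_m$ against a scheme advanced along $\tau_k+\ti\tau_m$. The overall rate is dictated by the Skorohod discrepancy $\|\ti B_{\ti\tau_m}-\ti B_{t_m}\|_p$ rather than by the better Euler error, so the delicate point is to ensure that the $h^{1/4}$ term survives the Gronwall iteration without deterioration; this is exactly the ``different time lines'' difficulty announced for Section \ref{5}.
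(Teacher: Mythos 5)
Your treatment of items \eqref{Xsx-Xty}--\eqref{supcX} is fine and essentially identical to the paper's (the paper disposes of \eqref{Xsx-Xty}, \eqref{nablaX-diff}, \eqref{supcX} in a line each as standard SDE stability plus Gronwall, and proves \eqref{supX} exactly as you do, obtaining even the stronger bound $Ch^{p/2}$). The genuine gap is in the third term of your decomposition for \eqref{XandcX}. You assert that the difference between the Euler scheme driven by the Brownian grid increments and the scheme driven by the Skorohod increments has ``accumulated effect governed by $\sup_j\|\ti B_{\ti\tau_j}-\ti B_{t_j}\|_p=O(h^{1/4})$'' and is then closed by discrete Gronwall, but you give no mechanism for this, and the obvious mechanisms fail. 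Gronwall only absorbs the Lipschitz feedback terms; what remains is the noise mismatch
\[
\sum_{j=1}^{m}\sigma\bigl(t_{k+j},\ti\cX^{\tau_k,y}_{\tau_k+\ti\tau_{j-1}}\bigr)\Bigl[(\ti B_{t_j}-\ti B_{t_{j-1}})-(\ti B_{\ti\tau_j}-\ti B_{\ti\tau_{j-1}})\Bigr],
\]
and none of the standard bounds gives $h^{1/4}$: termwise each summand is only $O(h^{1/2})$ in $L_p$, so the triangle inequality over $m\asymp h^{-1}$ steps yields $O(h^{-1/2})$; an Abel summation leaves $\sum_j(\sigma_j-\sigma_{j-1})(\ti B_{t_j}-\ti B_{\ti\tau_j})$ with $|\sigma_j-\sigma_{j-1}|=O(h^{1/2})$ per step, hence a crude bound $h^{-1}\cdot h^{1/2}\cdot h^{1/4}=O(h^{-1/4})$; and the sum is not a martingale with respect to any single usable filtration, because the two families of increments live on different time lines and $\ti\tau_j$ can lie on either side of $t_j$, so BDG cannot be applied to it as it stands. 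Thus precisely at the step you yourself flag as the delicate point, your argument stops.

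The missing idea --- and the paper's central device in the proof of \eqref{XandcX} --- is to bring both noise terms onto one Brownian time line \emph{before} estimating: since the random-walk increments are themselves Brownian increments over the Skorohod intervals, the discrete stochastic sum is a genuine It\^o integral,
\[
\int_{(0,t_m]}\sigma(t_k+r,\ti X^{n,t_k,y}_{t_k+r-})\,d\ti B^n_r
=\int_0^\infty \sum_{l=0}^{m-1}\sigma(t_{k+l+1},\ti X^{n,t_k,y}_{t_{k+l}})\,\ind_{(\ti\tau_l,\ti\tau_{l+1}]}(r)\,d\ti B_r .
\]
The difference of the two noise contributions is then a \emph{single} stochastic integral, to which BDG applies cleanly: on the overlap $[0,t_m\wedge\ti\tau_m]$ one compares the integrands pointwise, using the $\half$-H\"older continuity of $\sigma$ in time together with $\e|\ti\tau_l-t_l|^{p/2}\le C(t_l h)^{p/4}$ from Lemma \ref{B-difference}, the bound $\ti\e|\ti X^{t_k,x}_{t_k+\ti\tau_l\wedge t_m}-\ti X^{t_k,x}_{t_k+t_l}|^p\le Ch^{p/4}$, and an independence argument ($\ti\tau_{l+1}-\ti\tau_l$ independent of the position mismatch) to replace the random interval lengths by $h$ in the Gronwall term; the two tail integrals over $(t_m\wedge\ti\tau_m,\,t_m]$ and $(t_m\wedge\ti\tau_m,\,\ti\tau_m]$ are bounded via BDG by $\|\sigma\|_\infty^p\,\e|t_m-\ti\tau_m|^{p/2}=O(h^{p/4})$. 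Note that the rate $h^{p/4}$ thus enters through $|\ti\tau_l-t_l|$, not through $\ti B_{\ti\tau_j}-\ti B_{t_j}$ as you propose, and only after this rewriting does Gronwall close in the quantity $\ti\e|\ti X^{t_k,x}_{t_k+t_l}-\ti X^{n,t_k,y}_{t_k+t_l}|^p$; your intermediate Euler scheme then becomes superfluous, since the paper compares the two processes directly. As your \eqref{nablaX-nablacX} reuses the same decomposition, the gap propagates there as well; the paper's additional remark for that item is that the variational coefficients are no longer bounded but of linear growth, which is handled through $L_p$-boundedness of the integrands.
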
 
\begin{proof}
\noindent \eqref{Xsx-Xty}: This  estimate  is  well-known. \\
\eqref{supX}: For the stochastic integral we use the  inequality of BDG  and then, since $b$ and $\sigma $ are bounded, we get 
by Lemma \ref{B-difference} \eqref{tau1-expectation} that
\equa
&& \less \ti \e  \sup_{\ti \tau_l \wedge t_{m} \le r \le \ti \tau_{l+1} \wedge
    t_{m}} |\ti X^{t_k,x}_{t_k+r}-  \ti X^{t_k,x}_{t_k+ \ti \tau_l \wedge
    t_{m}}|^{p} \\
& \le &  C(p)(\norm{b}^p_{\infty} \ti\e | \ti \tau_{l+1}- \ti \tau_l  |^p + \norm{\sigma}^p_{\infty} \e | \ti \tau_{l+1}- \ti \tau_l 
|^{\frac{p}{2}}) \le C(b,\sigma, T,p) \, h^{\frac{p}{2}}.
\tion 
\noindent \eqref{nablaX-diff}:  This can be easily seen because the process  $(\nb  X^{s,y}_r)_{r \in [s,T]} $   solves  the linear SDE  \eqref{nabla-X} with bounded coefficients. \\
\noindent \eqref{supcX}:  The  process solves \eqref{nabla cX}. The estimate follows from 
the inequality of BDG and Gronwall's lemma.\\
\eqref{XandcX}: Recall that from \eqref{Xn}  and \eqref{X-n-with-tilde} we have
$$\ti \cX^{\tau_k,y}_{\tau_k+\ti \tau_m} = \ti X^{n,t_k,y}_{t_k+t_m} = y + \int_{(0, t_m]} b(t_k+ r, \ti X^{n,t_k,y}_{t_k+r-}) d[\ti B^n, \ti B^n]_r + \int_{(0, t_m]}  \sigma(t_k+r,\ti X^{n,t_k,y}_{t_k+r-})
        d \ti B^n_{r},$$
and  $\ti X^{t_k,x}_{t_k+t_m} $ is given by       
$$\ti X^{t_k,x}_{t_k+t_m} =    x    + \int_0^{t_m} b(t_k+r, \ti X^{t_k,x}_{t_k+r}) dr + \int_0^{t_m}  \sigma(t_k+r,\ti X^{t_k,y}_{t_k+r})
        d \ti B_r.$$
To compare the stochastic integrals of the previous two equations we use the relation
$$ \int_{(0, t_m]}  \sigma(t_k+r,\ti X^{n,t_k,y}_{t_k+r-}) d \ti B^n_r =   \int_0^\infty   \sum_{l=0}^{m-1}  \sigma(t_{k+l+1},\ti X^{n,t_k,y}_{t_{k+l}}) \ind_{(\ti \tau_l, \ti \tau_{l+1}]}(r)  d \ti B_r.$$
We define an 'increasing' map   $i(r) := t_{l+1} $ for  $r$ in $(t_l,t_{l+1}]$  and a 'decreasing' map $d(r)   := t_{l} $ for $(t_l,t_{l+1}]$  and split the differences as follows (using Assumption \ref{hypo2}-(iii)
for the coefficient $b$)
\equal \label{iteration}
&& \less  \ti \e\big|\ti X^{t_k,x}_{t_k+t_m}- \ti X^{n,t_k,y}_{t_k+t_m}\big|^p  \notag \\ 
&\le & C(b,p) \left ( |x - y|^p 
   +   \ti \e \int_0^{t_m} |r- i(r)|^{\frac{p}{2}}  +| \ti X^{t_k,x}_{t_k+r}- \ti X^{t_k,x}_{t_k+d(r)} |^p + | \ti X^{t_k,x}_{t_k+d(r)}- \ti X^{n,t_k,y}_{t_k+d(r)} |^p  dr \right)\notag\\
&& +C(p)   \ti \e | \int_{t_{m} \wedge \ti \tau_{m}}^{t_{m}} \sigma(t_k+r,\ti X^{t_k,x}_{t_k+r}) d \ti B_r|^p \notag \\
&&+ C(p)  \ti \e | \int_{t_{m} \wedge \ti \tau_{m}}^{\ti \tau_{m}}  \sum_{l=0}^{ m-1}  \sigma(t_{k+l+1},\ti X^{n,t_k,y}_{t_{k+l}}) 
\ind_{(\ti \tau_l, \ti \tau_{l+1}]}(r)  d \ti B_r|^p\notag \\
&& +   C(p)  \ti \e | \int_0^{t_{m} \wedge \ti \tau_{m}}  \!\!\!\! \sigma(t_k+r,\ti X^{t_k,x}_{t_k+r})  -  \sum_{l=0}^{m-1}  \sigma(t_{k+l+1},\ti X^{n,t_k,y}_{t_{k+l}}) 
\ind_{(\ti \tau_l, \ti \tau_{l+1}]}(r)     d \ti B_r|^p.  
\tionl        
We estimate the terms on the r.h.s as follows: by standard estimates for SDEs with bounded coefficients one has that
\equa
  \ti \e \int_0^{t_m} |r- i(r)|^{\frac{p}{2}}   +| \ti X^{t_k,x}_{t_k+r}- \ti
  X^{t_k,x}_{t_k+d(r)} |^p dr \le  C(b,\sigma,T,p) h^\frac{p}{2}.
\tion 
By the BDG inequality, the fact that $\sigma$ is bounded  and Lemma \ref{B-difference} we conclude that 
\equa&& \less \ti \e \bigg | \int_{t_{m} \wedge \ti \tau_{m}}^{t_{m}} \sigma(t_k+r,\ti X^{t_k,x}_{t_k+r}) d \ti B_r\bigg |^p + \ti \e \bigg | \int_{t_{m} \wedge \ti \tau_{m}}^{\ti \tau_{m}}  \sum_{l=0}^{m-1}  \sigma(t_{k+l+1},\ti X^{n,t_k,y}_{t_{k+l}}) 
\ind_{(\ti \tau_l, \ti \tau_{l+1}]}(r)  d \ti B_r \bigg|^p \\
&& \le  C(\sigma,p)  \|\sigma\|^p_\infty \ti \e  |\ti
\tau_{m}-t_{m}|^\frac{p}{2} \le  C(\sigma,p) ( t_{m} h)^\frac{p}{4}. 
\tion 
Finally, by the BDG inequality
\equa && \less \ti \e \bigg | \int_0^{t_{m} \wedge \ti \tau_{m}}  \!\!\!\! \sigma(t_k+r,\ti X^{t_k,x}_{t_k+r})  -  \sum_{l=0}^{m-1}  
\sigma(t_{k+l+1},\ti X^{n,t_k,y}_{t_{k+l}}) 
\ind_{(\ti \tau_l, \ti \tau_{l+1}]}(r)     d \ti B_r \bigg|^p \\
&\le & C(p) \ti \e \bigg ( \int_0^{t_{m}}  \sum_{l=0}^{m-1}
|\sigma(t_k+r,\ti X^{t_k,x}_{t_k+r})-\sigma(t_{k+l+1},\ti X^{n,t_k,y}_{t_{k+l}})|^2 \ind_{(\ti \tau_l, \ti \tau_{l+1}]}(r)  dr\bigg )^\frac{p}{2} \\
&\le &  C(\sigma,p)\ti \e \bigg ( \sum_{l=0}^{m-1}\int_{\ti \tau_l \wedge t_{m}}^{\ti \tau_{l+1} \wedge t_{m}}\!\!\!\! 
|\ti \tau_{l+1}-t_{l+1}|^\frac{p}{2} + |\ti \tau_l -t_{l+1} |^\frac{p}{2} + |\ti X^{t_k,x}_{t_k +r}- \ti X^{t_k,x}_{t_k+\ti \tau_l \wedge t_{m}}|^p \\ 
&&  \hspace{20em} + |\ti X^{t_k,x}_{t_k+ \ti \tau_l \wedge t_{m}}-\ti X^{n,t_k,y}_{t_{k+l}} |^p dr\bigg )  \\
&\le&   C(\sigma,T,p) \Big ( h^\frac{p}{2}  + \max_{1\le l<m} ( \ti \e  |\ti \tau_l -t_l|^p)^\half 
 +  \max_{0\le l<m}  ( \ti \e  \sup_{\ti \tau_l \wedge t_{m} \le r \le \ti \tau_{l+1} \wedge t_{m}} |\ti X^{t_k,x}_{t_k+r}- 
 \ti X^{t_k,x}_{t_k+ \ti \tau_l \wedge t_{m}}|^{2p} )^\half \\
&&  +  \ti \e  \sum_{l=0}^{m-1}   |\ti X^{t_k,x}_{t_k+\ti \tau_l \wedge t_{m}} -\ti X^{n,t_k,y}_{t_{k+l}} |^p (\ti \tau_{l+1} - \ti \tau_l ) \Big ).
 \tion  
Moreover, since  $\ti \tau_{l+1} - \ti \tau_l$ is independent from  $|\ti X^{t_k,x}_{t_k+ \ti \tau_l \wedge t_{m}} -\ti X^{n,t_k,y}_{t_k+t_l} |^p$ we get by Lemma  \ref{B-difference}-\eqref{tauk-expectation} 
\equa
&& \less  \ti \e  \sum_{l=0}^{m-1}  |\ti X^{t_k,x}_{t_k+ \ti \tau_l \wedge t_{m}} -\ti X^{n,t_k,y}_{t_{k+l}} |^p   (\ti \tau_{l+1} - \ti \tau_l ) \\
&=&
 \ti \e   \sum_{l=0}^{m-1}  |\ti X^{t_k,x}_{t_k+\ti \tau_l \wedge t_{m}} -\ti X^{n,t_k,y}_{t_{k+l}} |^p  (t_{l+1} - t_l )\\
&\le&  C(T,p) \Big (   \ti \e    \int_0^{t_m}  |\ti X^{t_k,x}_{t_k+d(r)} -\ti X^{n,t_k,y}_{t_k+d(r)} |^p dr + \max_{0\le l<m}  \ti \e  |\ti X^{t_k,x}_{t_k+ \ti \tau_l \wedge t_{m}}- \ti X^{t_k,x}_{t_k+t_l}|^p \Big ).
 \tion
Using  Lemma \ref{B-difference}-\eqref{B-difference-estimatep}  one concludes similarly as in the proof of \eqref{supX} that $ \ti \e  |\ti
X^{t_k,x}_{t_k+ \ti \tau_l \wedge t_{m}}- \ti X^{t_k,x}_{t_k+t_l}|^p\le C(b,\sigma,T,p) h^\frac{p}{4} .$   
 Then  \eqref{iteration}  combined with the above estimates  implies that
\equa%l \label{to-use-double}
\ti \e\big|\ti X^{t_k,x}_{t_k+t_m}- \ti X^{n,t_k,y}_{t_k+t_m}\big|^p 
\le C(b,\sigma,T,p) \Big ( |x - y|^p + h^\frac{p}{4} +    \ti \e    \int_0^{t_m}  |\ti X^{t_k,x}_{t_k+ d(r)} -\ti X^{n,t_k,y}_{t_k+d(r)} |^p dr \Big ).
 \tion%l    
Then Gronwall's lemma yields  
$$
\ti \e\big|\ti X^{t_k,x}_{t_k+t_m}- \ti X^{n,t_k,y}_{t_k+t_m}\big|^p 
\le   C(b,\sigma,T,p)  ( |x - y|^p + h^\frac{p}{4}). $$
\noindent \eqref{nablaX-nablacX}:  We have 
\equal  \label{nabla cX}
 \nabla \ti X^{n,t_k, y}_{t_k+ t_m}
&=& 1 + \!\!\int_{(0,t_m]} b_x (t_k+ r,X^{n, t_k, y}_{t_k+r-})   \nabla \ti X^{n,t_k, y}_{t_k+r-}  d[\ti B^n,\ti B^n]_r  \!  \notag\\
&&+ \!\int_{(0,t_m]}  \sigma_x(t_k+ r, \ti X^{n,t_k,y}_{t_k+r-}) \nabla  \ti X^{n,t_k,  y}_{t_k+r-} d \ti B^n_r
\tionl       
and  
\equal \label{nabla X}
\nabla \ti X^{t_k,x}_{t_k+t_m} =  1   + \int_0^{t_m} b_x(t_k+ r, \ti X^{t_k,x}_{t_k+r})   \nabla \ti X^{t_k,x}_{t_k+r}dr 
                     + \int_0^{t_m}  \sigma_x(t_k+ r,\ti X^{t_k,x}_{t_k+r}) \nabla \ti X^{t_k,x}_{t_k+r}d \ti B_{r}.
\tionl                     
We may proceed similarly  as in  \eqref{XandcX} but this time the coefficients are not bounded but have linear growth. Here  one uses that the integrands are bounded in any $L_p(\PP).$                
        \end{proof} 
   
Finally, we estimate  the difference between the continuous-time Malliavin weight and its dis\-crete-time counterpart. 
        
\begin{lemme} \label{N-difference-estimate} Let $B$ and $\tilde B$ be connected via  \eqref{independent-B}. Under  Assumption \ref{hypo2} it holds that 
$$ \ti \e|\ti N^{t_k}_{t_m}\sigma(t_k,X_{t_k})
-\ti N^{n,\tau_k}_{\ti \tau_m}
         \sigma(t_{k+1},\cX_{\tau_k})|^2 \leq   C(b,\sigma,T,\delta) \frac{ |X_{t_k} - \cX_{\tau_k}|^2+  h^\frac{1}{2}}{(t_m-t_k)^{\frac{3}{2}}}, \quad m=k+1,...,n. $$

\end{lemme}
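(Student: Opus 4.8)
The plan is to write both weights as stochastic integrals against the independent copy $\ti B$, to factor out the common singular prefactor $\tfrac{1}{t_m-t_k}$, and then to compare the two integrals. Since $\nabla\ti X^{t_k,X_{t_k}}_{t_k}=1$, the continuous weight reads $\ti N^{t_k}_{t_m}\,\sigma(t_k,X_{t_k})=\tfrac{1}{t_m-t_k}\int_0^{t_m-t_k}G_s\,d\ti B_s$ with integrand $G_s:=\sigma(t_k,X_{t_k})\,\tfrac{\nabla\ti X^{t_k,X_{t_k}}_{t_k+s}}{\sigma(t_k+s,\ti X^{t_k,X_{t_k}}_{t_k+s})}$, while the Skorohod embedding $\sqrt h\,\ep_j\stackrel{d}{=}\ti B_{\ti\tau_j}-\ti B_{\ti\tau_{j-1}}$ turns the discrete weight into $\ti N^{n,\tau_k}_{\ti\tau_m}\,\sigma(t_{k+1},\cX_{\tau_k})=\tfrac{1}{t_m-t_k}\int_0^{\ti\tau_{m-k}}\ti G_s\,d\ti B_s$, where $\ti G_s$ is the piecewise-constant integrand equal to $\sigma(t_{k+1},\cX_{\tau_k})\,\tfrac{\nabla\ti\cX^{\tau_k,\cX_{\tau_k}}_{\tau_{j-1}}}{\sigma(t_j,\cX_{\tau_{j-1}})}$ on each random interval $s\in(\ti\tau_{j-k-1},\ti\tau_{j-k}]$. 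Writing $\theta:=(t_m-t_k)\wedge\ti\tau_{m-k}$, $\rho:=(t_m-t_k)\vee\ti\tau_{m-k}$ and letting $H$ stand for whichever of $G,\ti G$ is active on the leftover interval $(\theta,\rho]$, I get
\[
(t_m-t_k)\bigl(\ti N^{t_k}_{t_m}\sigma(t_k,X_{t_k})-\ti N^{n,\tau_k}_{\ti\tau_m}\sigma(t_{k+1},\cX_{\tau_k})\bigr)=\int_0^{\theta}(G_s-\ti G_s)\,d\ti B_s\pm\int_{\theta}^{\rho}H_s\,d\ti B_s.
\]

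The first integral is the genuine integrand comparison. Being a stopped martingale, optional stopping and the It\^o isometry give $\ti\e\bigl|\int_0^{\theta}(G_s-\ti G_s)\,d\ti B_s\bigr|^2=\ti\e\int_0^{\theta}|G_s-\ti G_s|^2\,ds\le(t_m-t_k)\sup_s\ti\e|G_s-\ti G_s|^2$. To bound $\ti\e|G_s-\ti G_s|^2$ I would compare the $\sigma$-prefactors via the $\tfrac12$-H\"older-in-time and Lipschitz-in-space regularity of $\sigma$ (Assumption \ref{hypo2}), obtaining $|\sigma(t_k,X_{t_k})-\sigma(t_{k+1},\cX_{\tau_k})|\le C(|X_{t_k}-\cX_{\tau_k}|+h^{1/2})$; the denominators stay bounded away from $0$ thanks to $\sigma\ge\delta$; and the numerators are compared first by moving $\nabla\ti X$ from $t_k+s$ to the neighbouring grid point (a modulus-of-continuity estimate over an interval of length $\sim h$) and then by Lemma \ref{X-Lemma}\eqref{nablaX-nablacX}, which yields an error $C(|X_{t_k}-\cX_{\tau_k}|+h^{1/4})$. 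Together with the $L_p$-bounds of Lemma \ref{X-Lemma}\eqref{supcX} this gives $\ti\e|G_s-\ti G_s|^2\le C(|X_{t_k}-\cX_{\tau_k}|^2+h^{1/2})$ uniformly in $s$, so the first integral contributes $\tfrac{C(|X_{t_k}-\cX_{\tau_k}|^2+h^{1/2})}{t_m-t_k}$.

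The second integral carries the mismatch between the deterministic horizon $t_m-t_k$ and the random embedding horizon $\ti\tau_{m-k}$, and it is the source of the exponent $\tfrac32$. By the BDG inequality and Cauchy--Schwarz, using that $H$ is $L_p$-bounded uniformly (Lemma \ref{X-Lemma}\eqref{supcX} and $\sigma\ge\delta$) and that $\ti\e|\ti\tau_{m-k}-(t_m-t_k)|^2\le C(t_m-t_k)h$ by Lemma \ref{B-difference}\eqref{B-difference-estimatep},
\[
\ti\e\Bigl|\int_{\theta}^{\rho}H_s\,d\ti B_s\Bigr|^2\le C\,\ti\e\!\int_{\theta}^{\rho}|H_s|^2\,ds\le C\bigl(\ti\e\sup_s|H_s|^4\bigr)^{1/2}\bigl(\ti\e|\ti\tau_{m-k}-(t_m-t_k)|^2\bigr)^{1/2}\le C\,(t_m-t_k)^{1/2}h^{1/2},
\]
which contributes $\tfrac{Ch^{1/2}}{(t_m-t_k)^{3/2}}$ after dividing by $(t_m-t_k)^2$.

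Combining the two contributions and bounding the first one via $\tfrac{1}{t_m-t_k}\le\tfrac{\sqrt T}{(t_m-t_k)^{3/2}}$ yields the asserted estimate. The main obstacle is precisely the endpoint term: because the random walk lives on the Skorohod time line $(\ti\tau_j)$ while the continuous weight lives on the deterministic grid, the two stochastic integrals have different (random versus deterministic) upper limits, and controlling this gap is what forces the weaker singularity $(t_m-t_k)^{-3/2}$ rather than $(t_m-t_k)^{-1}$.
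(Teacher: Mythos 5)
Your overall architecture is exactly the paper's: both weights are written as stochastic integrals against $\ti B$ (your $G_s$ and $\ti G_s$ are the paper's $a_{t_k+s}$ and $a^n_{\tau_k+s}$), the squared difference is split into an integrand comparison on the common horizon plus the leftover piece between $t_{m-k}$ and $\ti \tau_{m-k}$, and the leftover is handled, as in the paper, via the uniform $L_4$-boundedness of the integrands (which follows from $\sigma \ge \delta$, $\|\sigma\|_\infty < \infty$ and moment bounds on $\nabla \ti X$, $\nabla \ti \cX$) together with $\ti\e|\ti\tau_{m-k}-t_{m-k}|^2 \le C\, t_{m-k}\, h$ from Lemma \ref{B-difference}\eqref{B-difference-estimatep}; this is indeed the source of the exponent $\tfrac32$, as you correctly identify, and the ingredients you cite for comparing the integrands (H\"older/Lipschitz regularity of $\sigma$, $\sigma \ge \delta$, and Lemma \ref{X-Lemma}\eqref{nablaX-nablacX}) are the same ones the paper uses in its estimate \eqref{a-difference}.

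The one step that does not go through as written is your uniform pointwise bound $\sup_s \ti\e|G_s - \ti G_s|^2 \le C(|X_{t_k}-\cX_{\tau_k}|^2 + h^{1/2})$. For fixed $s$, the active index $\ell(s)$ with $s \in (\ti\tau_{\ell(s)-1}, \ti\tau_{\ell(s)}]$ is \emph{random}, so $\ti G_s$ involves $\nabla \ti \cX$ at a random step; Lemma \ref{X-Lemma}\eqref{nablaX-nablacX} is stated for a fixed deterministic index and cannot be invoked at $\ell(s)$ without a maximal-type version, while a crude union or Cauchy--Schwarz over the candidate indices loses a factor (of order $\sqrt{m-k}$ or a negative power of $h$). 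Relatedly, your parenthetical ``modulus-of-continuity estimate over an interval of length $\sim h$'' is inaccurate: the displacement from $s$ to the deterministic grid point $t_{\ell(s)-1}$ is governed by the fluctuation of the embedding times, $|\ti\tau_\ell - t_\ell| \sim (t_\ell h)^{1/2}$, not by $h$ --- though this still yields the $h^{1/4}$-size error you claim, consistent with the $h^{p/4}$ rate in Lemma \ref{X-Lemma}\eqref{nablaX-nablacX}. The paper sidesteps both issues by never taking a pointwise supremum in $s$: it keeps the $ds$-integral, decomposes it over the random intervals $(\ti\tau_{\ell-1},\ti\tau_\ell]$, and bounds each piece by $\bigl(\ti\e \sup_{s} |a_{t_k+s} - a^n_{\tau_k+\ti\tau_\ell}|^4\bigr)^{1/2} \bigl(\ti\e|\ti\tau_\ell - \ti\tau_{\ell-1}|^2\bigr)^{1/2}$, where the sup runs over the $\ell$-th random interval; the $L_4$ sup-over-interval estimate \eqref{a-difference} is then proved, at fixed $\ell$, by exactly the four-term splitting of $a - a^n$ that you sketch. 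If you replace your pointwise step by this interval-wise Cauchy--Schwarz (which costs only $\ti\e|\ti\tau_\ell - \ti\tau_{\ell-1}|^2 \le C h^2$ per interval and so sums to the same $C(|X_{t_k}-\cX_{\tau_k}|^2+h^{1/2})/(t_m-t_k)$ contribution), the rest of your argument reproduces the paper's proof.
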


\begin{proof} 
For   $N^{n,\tau_k}_{\ti \tau_m}$   and  $N^{t_k}_{t_{m}}$ given by   \eqref{N-cont} and    \eqref{N-discrete},  respectively,  we introduce the 
notation 
\equa
\ti N^{t_k}_{t_m} \sigma(t_k,X_{t_k})=: \frac{1}{t_{m-k}} \int_{0}^{t_{m-k}} a_{t_k+s} d\ti B_{s} \quad \text{and } \quad  \ti N^{n,\tau_k}_{\ti \tau_m}
         \sigma(t_{k+1},\cX_{\tau_k})=:  \frac{1}{t_{m-k}} \int_{0}^{\ti \tau_{m-k}}  a_{\tau_k+s}^n   d\ti B_{s} 
                         \tion
with 
\equa  a_{t_k+s}:=    \nabla \ti X_{t_k+s}^{t_k, X_{t_k}}\frac{\sigma(t_k,X_{t_k})}{\sigma(t_k\!+\!s,\ti X_{t_k+s}^{t_k,X_{t_k}})}  
\,\, \text{ and } \,\,
a_{\tau_k+s}^n   :=  \sum_{\ell=1}^{m-k} \nabla \ti \cX_{\tau_k + \ti \tau_{\ell-1}}^{\tau_k,\cX_{\tau_k}} \frac{\sigma(t_{k+1},\cX_{\tau_k})}{\sigma(t_{k+\ell},\ti \cX_{\tau_k+ \ti \tau_{\ell-1}}^{\tau_k,\cX_{\tau_k}})}  \ind_{s
    \in (\ti \tau_{\ell-1}, \ti \tau_\ell]}.
 \tion
By the inequality of BDG,  \equa
&& \less  (t_m-t_k)^2 \ti \e|\ti N^{t_k}_{t_m} \sigma(t_k,X_{t_k})-\ti N^{n,\tau_k}_{\ti \tau_m} \sigma(t_{k+1},\cX_{\tau_k})|^2 \\
	   & = &  \ti \e \Big|\int_{0}^{t_{m-k}} a_{t_k+s} d\ti B_s -\int_{0}^{\ti \tau_{m-k}} a^n_{\tau_k+s} d\ti B_{s}\Big|^2\\
& =&  \ti \e \int_{0}^{t_{m-k}\wedge  \ti \tau_{m-k}} (a_{t_k+s} - a^n_{\tau_k+s})^2 ds  + \ti \e \int_{0}^{\infty} a_{t_k+s}^2 \ind_{(\ti \tau_{m-k},t_{m-k}]}(s)  ds\\
&&  + \ti \e \int_{0}^{\infty} (a^n_{\tau_k+s})^2 \ind_{(t_{m-k},\ti \tau_{m-k}]}(s)  ds\\
& \leq&   \sum_{\ell=1}^{m-k} \pr{\ti \e \sup_{s \in [0, t_{m-k}] \cap (\ti \tau_{\ell-1}, \ti \tau_{\ell}]}\big|a_{t_k+s} - a^n_{ \tau_k +\ti \tau_{\ell}}\big|^4}^{\frac{1}{2}} (\ti \e |\ti \tau_{\ell} -\ti \tau_{\ell-1}|^2)^\half  \\
&& + \pr{\ti \e \sup_{s \in [0, t_{m-k}]} |a_{t_k+s}|^4  + \ti \e \max_{1 \leq \ell \leq m-k} |a^n_{\tau_k +\ti \tau_{\ell}}|^4}^{\frac{1}{2}} (\ti \e |t_{m-k} -\ti \tau_{m-k}|^2 )^{\frac{1}{2}}.
\tion 
The assertion follows then  from Lemma \ref{B-difference} and from the estimates
\begin{align}
\ti \e \sup_{s \in [0,t_{m-k}]\cap [\ti \tau_{\ell-1}, \ti \tau_{\ell}]}
  |a_{t_k+s}-  a^n_{\tau_k+ \ti \tau_\ell}|^4 & \le   C(b,\sigma, T,\delta)  (|X_{t_k} - X^n_{t_k}|^4+  h) \label{a-difference}  \\
\ti \e \sup_{s \in [0,t_{m-k}]} | a_{t_k+s}|^4 + \ti \e \max_{1 \leq \ell \leq m-k} |a^n_{\tau_k+\ti \tau_{\ell}}|^4 & \leq 2 \|\sigma\|_{\infty}^4\delta^{-4}.  \label{a-bounded}
\end{align}
 
So it remains to show these inequalities. We put
$$    \ti K^{t_k}_{t_k+s} := \frac{\sigma(t_k,X_{t_k})}{\sigma(t_k+s,\ti X_{t_k+s}^{t_k,X_{t_k}})}  \quad \text{ and }  \quad \ti K^{n,\tau_k}_{\tau_k+\ti \tau_{\ell-1}} 
:= \frac{\sigma(t_{k+1}, \cX_{\tau_k})}{\sigma(t_{k+\ell}, \ti \cX^{\tau_k, \cX_{\tau_k}}_{\tau_k +\ti \tau_{\ell-1}})}$$
and notice that by Assumption  \ref{hypo2} both expressions are bounded by $ \norm{\sigma}_{\infty}\delta^{-1}.$ 
To show \eqref{a-difference}  let us split $a_{t_k+s}-  a^n_{\tau_k+ \ti \tau_\ell}$ in the following way:
  \begin{align*}
 a_{t_k+s}-  a^n_{\tau_k+ \ti \tau_\ell}
 = & \,  \ti K^{t_k}_{t_k+s} (\nabla \ti X_{t_k+s}^{t_k, X_{t_k}} -\nabla \ti X_{t_k + t_{\ell-1}}^{t_k, X_{t_k}})
        +  \nabla \ti X_{t_k + t_{\ell-1}}^{t_k, X_{t_k}} ( \ti K^{t_k}_{t_k+s}  -   \ti K^{t_k}_{t_k +t_{\ell-1}}) \\
    &+  \ti K^{t_k}_{t_k +t_{\ell-1}} ( \nabla \ti X_{t_k+ t_{\ell-1} }^{t_k, X_{t_k}} -    \nabla \ti \cX_{\tau_k +\ti \tau_{\ell-1}}^{\tau_k,\cX_{\tau_k}} )   
        +  \nabla \ti \cX_{\tau_k +\ti \tau_{\ell-1}}^{\tau_k,\cX_{\tau_k}}  ( \ti K^{t_k}_{t_k+ t_{\ell-1}} - \ti K^{n,\tau_k}_{\tau_k +\ti \tau_{\ell-1}}).
  \end{align*}
Then 
\equa 
&& \less \ti \e  \sup_{s \in [\ti \tau_{\ell-1} \wedge t_{m-k}, \ti \tau_{\ell} \wedge t_{m-k}]}  |  \ti K^{t_k}_{t_k+s} (\nabla \ti X_{t_k+s}^{t_k, X_{t_k}} -\nabla \ti X_{t_k + t_{\ell-1}}^{t_k, X_{t_k}})|^4  \\ 
&\le& \norm{\sigma}^4_{\infty}\delta^{-4} \ti \e  \sup_{s \in [\ti \tau_{\ell-1} \wedge t_{m-k}, \ti \tau_{\ell} \wedge t_{m-k}]}  |  \nabla \ti X_{t_k+s}^{t_k, X_{t_k}} -\nabla \ti X_{t_k + t_{\ell-1}}^{t_k, X_{t_k}}|^4   
\le   C(b,\sigma,T,\delta) h
\tion  
 since one can show similarly to Lemma  \ref{X-Lemma}-\eqref{supX} that 
  $$\ti \e \sup_{s \in [\ti \tau_{\ell-1} \wedge t_{m-k}, \ti \tau_{\ell} \wedge t_{m-k}]}  |  \nabla \ti X_{t_k+s}^{t_k, X_{t_k}} -\nabla \ti X_{t_k + t_{\ell-1}}^{t_k, X_{t_k}}|^4 \le  C(b,\sigma,T,\delta) h. $$ 
  
Notice that $\nabla \ti X_t^{t_k, X_{t_k}}$      and    $\nabla \ti \cX_{\tau_m}^{\tau_k,\cX_{\tau_k}} $ solve the linear SDEs   \eqref{nabla X}  and \eqref{nabla cX}, respectively. 
Therefore,    
\equal \label{bound} \ti \e \sup_{s \in
      [0,t_{m-k}]}|\nabla \ti X_{t_k+s}^{t_k, X_{t_k}}|^p \le  C(b,\sigma,T,p) \quad \text{ and } \quad 
      \ti\e  \max_{0 \leq \ell \leq m-k}| \nabla \ti \cX_{\ti \tau_{\ell}+\tau_k}^{\tau_k,\cX_{\tau_k}}|^p\leq C(b,\sigma,T,p).
      \tionl  
 For the second term we get      
\equa 
&& \less \ti \e  \sup_{s \in [\ti \tau_{\ell-1} \wedge t_{m-k}, \ti \tau_{\ell} \wedge t_{m-k}]} | \nabla \ti X_{t_k + t_{\ell-1}}^{t_k, X_{t_k}} ( \ti K^{t_k}_{t_k+s}  -   \ti K^{t_k}_{t_k +t_{\ell-1}}) |^4 \\
&\le&  C(\sigma, \delta) (\ti \e | \nabla \ti X_{t_k + t_{\ell-1}}^{t_k, X_{t_k}}|^8 )^\half     
          (\ti \e  \sup_{s \in [\ti \tau_{\ell-1} \wedge t_{m-k}, \ti \tau_{\ell} \wedge t_{m-k}]} (| t_\ell-s|^4 + | \ti X_{t_k+s}^{t_k,X_{t_k}}-\ti X_{t_k+t_\ell}^{t_k,X_{t_k}}|^8)^\half \\
 &\le &   C(b,\sigma,T,\delta) h.        
\tion 
For the  third term  Lemma  \ref{X-Lemma}-\eqref{nablaX-nablacX} implies that 
$$ \ti \e |\ti K^{t_k}_{t_k +t_{\ell-1}} ( \nabla \ti X_{t_k+ t_{\ell-1} }^{t_k, X_{t_k}} -    \nabla \ti \cX_{\tau_k +\ti \tau_{\ell-1}}^{\tau_k,\cX_{\tau_k}} )   |^4 \le   C(b,\sigma,T) \norm{\sigma}^4_{\infty}\delta^{-4} (| X_{t_k} - \cX_{\tau_k}|^4+  h). $$
The last term we estimate similarly to the second one, 
 \equa
&&\less  \ti \e |\nabla \ti \cX_{\tau_k +\ti \tau_{\ell-1}}^{\tau_k,\cX_{\tau_k}} ( \ti K^{t_k}_{t_k+ t_{\ell-1}} - \ti K^{n,\tau_k}_{\tau_k +\ti \tau_{\ell-1}})|^4\\ 
& \leq&  C(\sigma, \delta)(\ti \e | \nabla \ti \cX_{\tau_k +\ti \tau_{\ell-1}}^{\tau_k,\cX_{\tau_k}} |^8 )^\half
 ( |X_{t_k} - \cX_{\tau_k}|^{8} + \ti \e | \cX^{\tau_k, \cX_{\tau_k}}_{\tau_k +\ti \tau_{\ell-1}}-\ti X_{t_k+t_{\ell-1}}^{t_k,X_{t_k}}|^8)^\half    \\
&\leq& C(b,\sigma,T, \delta) (|X_{t_k} - \cX_{\tau_k}|^4 + h).
\tion 
To see \eqref{a-bounded} use the estimates \eqref{bound}.
\end{proof}

We close this  section with  estimates concerning the effect of $T_{_{m,{\pm}}}$  and the  discretized Malliavin derivative $\mathcal{D}^n_k$ (see Definition \ref{T-m}) on 
$X^n.$
\begin{lemme} \label{nabla and Malliavin} Under Assumption \ref{hypo2}, and  for  $p\ge 2,$ we have
\begin{enumerate}[(i)]
\item \label{nM-i} \quad $\e |X^n_{t_l}- T_{_{m,{\pm}}}X^n_{t_l}|^p \leq  C(b,\sigma,T,p) h^{\frac{p}{2}}, \quad  1 \leq l, m \leq n$,
\item \label{nM-iv} \quad $\e\left | \nabla X^{n,t_k,X^n_{t_k}}_{t_m}  - \dfrac{\mathcal{D}^n_{k+1}X^n_{t_m} }{\sigma(t_{k+1},X^n_{t_k})} \right |^p 
  \le  C(b,\sigma,T,p) h^{\frac{p}{2}}, \quad 0 \le k < m \le n.$
\item \label{nM-iii} \quad $\e |\mathcal{D}^n_k X^n_{t_m} |^p \le C(b,\sigma,T,p), \quad0 \le k \leq m \leq n$.
\end{enumerate}
\end{lemme}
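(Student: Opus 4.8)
The plan is to prove the three estimates in the order (i), (iii), (ii), since the proof of (ii) will use both of the others. The common mechanism is that each of the processes involved solves a linear recursion driven by the $(\cG_k)$-martingale $B^n$ with bounded coefficients, so in every case I would take $L_p$-norms, apply the Burkholder--Davis--Gundy (BDG) inequality to the stochastic sums and H\"older's inequality to the drift sums, and close the estimate with a discrete Gronwall lemma.

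For (i) I note first that $X^n_{t_l}$ depends only on $\ep_1,\dots,\ep_l$, so $T_{_{m,\pm}}X^n_{t_l}=X^n_{t_l}$ whenever $m>l$ and there is nothing to prove; hence assume $m\le l$. The processes $(X^n_{t_j})_j$ and $(T_{_{m,\pm}}X^n_{t_j})_j$ obey \eqref{Xn} driven by the \emph{same} Rademacher sequence, except that $\ep_m$ is replaced by $\pm1$ in the second one. Since $X^n_{t_{m-1}}$ is unchanged by $T_{_{m,\pm}}$, the two coincide up to time $t_{m-1}$ and the difference introduced at step $m$ is exactly $\sqrt h\,\sigma(t_m,X^n_{t_{m-1}})(\ep_m\mp1)$, which has $L_p$-norm $O(h^{1/2})$ because $\sigma$ is bounded. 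For $j>m$ the noise is identical in both processes, so the increment of the difference $\Delta_j:=X^n_{t_j}-T_{_{m,\pm}}X^n_{t_j}$ is controlled by the Lipschitz continuity of $b$ and $\sigma$ in space (their first space derivatives are bounded by Assumption \ref{hypo2}(i)). Taking $L_p$-norms, using BDG on the martingale part and a discrete Gronwall argument then propagates the initial $O(h^{1/2})$ bound, giving $\e|\Delta_l|^p\le C(b,\sigma,T,p)h^{p/2}$.

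For (iii) I would work directly from the recursion \eqref{D^n_k-representation}. Its initial term $\sigma(t_k,X^n_{t_{k-1}})$ is bounded since $\sigma$ is bounded, and the coefficients $b_x^{(k,l)}$, $\sigma_x^{(k,l)}$ are, by Definition \ref{def:D-D-quotient}, averages of $b_x$ resp.\ $\sigma_x$ and hence bounded by $\|b_x\|_\infty$, $\|\sigma_x\|_\infty$. Thus $(\mathcal{D}^n_kX^n_{t_m})_m$ solves a linear recursion with bounded coefficients, and the same BDG-plus-Gronwall scheme yields $\e|\mathcal{D}^n_kX^n_{t_m}|^p\le C(b,\sigma,T,p)$ uniformly in $k\le m$.

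The main work is (ii), and this is where I expect the real obstacle. Writing $W_{t_m}:=\mathcal{D}^n_{k+1}X^n_{t_m}/\sigma(t_{k+1},X^n_{t_k})$, division of \eqref{D^n_k-representation} (with index $k+1$) by $\sigma(t_{k+1},X^n_{t_k})$ shows that $W$ solves the \emph{same} form of linear recursion as $(\nabla X^{n,t_k,X^n_{t_k}}_{t_m})_m$, with initial value $1$, but with the coefficients $b_x(t_l,X^n_{t_{l-1}})$, $\sigma_x(t_l,X^n_{t_{l-1}})$ replaced by their averaged versions $b_x^{(k+1,l)}$, $\sigma_x^{(k+1,l)}$. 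Setting $E_{t_m}:=\nabla X^{n,t_k,X^n_{t_k}}_{t_m}-W_{t_m}$, the difference of the two recursions produces, besides the usual Gronwall feedback $b_x(t_l,X^n_{t_{l-1}})E_{t_{l-1}}$ (and its $\sigma_x$ analogue), a source term of the form $[\,b_x(t_l,X^n_{t_{l-1}})-b_x^{(k+1,l)}\,]W_{t_{l-1}}$. The crux is to show this source is of order $h^{1/2}$: since the second space derivatives of $b$ and $\sigma$ are bounded (Assumption \ref{hypo2}(i)), $b_x$ and $\sigma_x$ are globally Lipschitz in space, so by Definition \ref{def:D-D-quotient} the coefficient discrepancy is bounded by $\|b_{xx}\|_\infty\int_0^1|X^n_{t_{l-1}}-\vartheta T_{_{k+1,+}}X^n_{t_{l-1}}-(1-\vartheta)T_{_{k+1,-}}X^n_{t_{l-1}}|\,d\vartheta$, whose $L_p$-norm is $O(h^{1/2})$ by part (i), while $\|W_{t_{l-1}}\|_p$ is bounded by part (iii) together with $\sigma\ge\delta$. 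Summing these $O(h^{1/2})$ sources (the drift sum over $\le n=T/h$ steps weighted by $h$, and the martingale sum handled by BDG) keeps them at order $h^{1/2}$ in $L_p$; together with the $O(h^{1/2})$ initial mismatch $E_{t_{k+1}}=h\,b_x(t_{k+1},X^n_{t_k})+\sqrt h\,\sigma_x(t_{k+1},X^n_{t_k})\ep_{k+1}$, a final discrete Gronwall step gives $\e|E_{t_m}|^p\le C(b,\sigma,T,p)h^{p/2}$. The delicate point throughout is the bookkeeping of the averaged coefficients $b_x^{(k+1,l)},\sigma_x^{(k+1,l)}$ against the pointwise ones and the quantification of their gap via (i); once that is in place the remaining estimates are routine.
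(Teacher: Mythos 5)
Your proposal is correct and, for parts (i) and (ii), follows essentially the paper's own scheme (BDG plus H\"older on the martingale and drift sums, coefficient gap controlled through the Lipschitz continuity of $b_x,\sigma_x$ and part (i), then discrete Gronwall), but it deviates in two identifiable places. First, for (iii) you run a full linear-recursion/Gronwall argument on \eqref{D^n_k-representation}, whereas the paper gets (iii) as a one-liner from (i): by \eqref{Dm} one has $\mathcal{D}^n_k X^n_{t_m}=(T_{_{k,+}}X^n_{t_m}-T_{_{k,-}}X^n_{t_m})/(2\sqrt h)$, and each of $X^n_{t_m}-T_{_{k,\pm}}X^n_{t_m}$ is $O(h^{1/2})$ in $L_p$, so dividing by $\sqrt h$ gives boundedness immediately; your route is valid but longer, and conversely it does not rely on (i), so each argument buys a slightly different logical economy. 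Second, in (ii) you split the cross term as $b_x E+(b_x-b_x^{(k+1,l)})W$ with $W=\mathcal{D}^n_{k+1}X^n/\sigma(t_{k+1},X^n_{t_k})$, bounding $W$ by (iii) together with $\sigma\ge\delta$; the paper instead pairs the coefficient gap with $\nabla X^{n,t_k,X^n_{t_k}}$, whose $L_{2p}$-boundedness comes from Lemma \ref{X-Lemma}-\eqref{supcX}, and keeps the feedback with the bounded averaged coefficient. Both closings are legitimate, but the paper's choice avoids the ellipticity constant, consistent with the stated dependence $C(b,\sigma,T,p)$, while yours makes the constant depend additionally on $\delta$ (harmless for every application in the paper, where $\delta$ appears anyway, but strictly a weaker statement). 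Finally, a small bookkeeping point: when you multiply the coefficient gap by $W$ (or by $\nabla X$) you need Cauchy--Schwarz, i.e.\ both factors in $L_{2p}$, not $L_p$ as written; since (i) and (iii) hold for every exponent $p\ge2$ this is a trivial fix, and indeed the paper states the gap estimate directly in the $L_{2p}$-norm.
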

\begin{proof}
\eqref{nM-i} 
By definition,  $T_{_{m, \pm}}X^n_{t_l} = X^n_{t_l}$ for $l \leq m -1, $  and 
 for $l\ge m$ we have
  \equa
T_{_{m, \pm}}X^n_{t_l} &=& X^n_{t_{m-1}} + b(t_m,X^n_{t_{m-1}})h \pm \sigma(t_m,X^n_{t_{m-1}}) \sqrt{h} \\
&& + h     \sum_{j=m+1}^{ {l}}
    b(t_j, T_{_{m,\pm}}X^n_{t_{j-1}})+ \sqrt{h} \sum_{j=m+1}^{ {l}}
    \sigma(t_{ j}, T_{_{m,\pm}}X^n_{t_{j-1}})\ep_j.
\tion
By the  properties of $b$ and $\sigma$  and thanks to the inequality of Burkholder-Davis-Gundy  and  H\"older's inequality we see that
  \begin{align*}
& \e|X^n_{t_l}- T_{_{m, \pm}}X^n_{t_l}|^p\\ & \leq C(p) \Big ( \e\big|\sigma(t_m,X^n_{t_{m-1}}) \sqrt{h}(1 \pm\varepsilon_m)\big|^p+ h^{p} \e \Big| \sum_{j = m+1 }^l \big(b(t_j, X^n_{t_{j-1}}) - b(t_j, T_{_{m,\pm}}X^n_{t_{j-1}})\big)\Big|^p\\
& \quad \quad + h^{\frac{p}{2}} \e \Big |\sum_{j=m+1}^l \big(\sigma(t_j, X^n_{t_{j-1}}) - \sigma(t_j, T_{_{m,\pm}} X^n_{t_{j-1}})\big)^2 \Big |^{\frac{p}{2}} \Big )\\
& \leq C(p) \Big ( \norm{\sigma}_{\infty}^p h^{\frac{p}{2}} + h (\norm{b_x}_{\infty}^p t_{l-m}^{p-1} + \norm{\sigma_x}_{\infty}^pt_{l-m}^{{\frac{p}{2}}-1}) \sum_{j=m+1}^l \e|X^n_{t_{j-1}}- T_{_{m,\pm}}X^n_{t_{j-1}}|^p \Big ).
\end{align*}
It remains to apply Gronwall's lemma. 

\noindent $\eqref{nM-iv}$  By the inequality of Burkholder-Davis-Gundy (BDG) and H\"older's inequality, 
\begin{align*} 
\e\!\left | \nabla X^{n,t_k,X^n_{t_k}}_{t_m} - \frac{  \mathcal{D}^n_{k+1}
  X^n_{t_{m}}}{\sigma( t_{k+1},X^n_{t_k})} \right |^p & \le C(p,T)  \bigg (  | b_x(t_{k+1}, X^n_{t_k})h + \sigma_x(t_{k+1},X^n_{t_k}) \sqrt{h} \ep_{k+1}|^p \notag \\
& \quad  + h^p \sum_{l=k+2}^{m} \e \left | b_x(t_l, X^n_{t_{l-1}})  \nabla X^{n,t_k,X^n_{t_k}}_{t_{l-1}} - b_x^{(k+1,l)} \frac{\mathcal{D}^n_{k+1} X^n_{t_{l-1}}}{\sigma( t_{k+1},X^n_{t_k})} \right|^p \nonumber\\   
& \quad +  h^{\frac{p}{2}} \!\!\!  \sum_{l=k+2}^{m} \e \bigg|\sigma_x(t_l, X^n_{t_{l-1}}) \nabla X^{n,t_k,X^n_{t_k}}_{t_{l-1}} -\sigma_x^{(k+1,l)} \frac{\mathcal{D}^n_{k+1} X^n_{t_{l-1}}}{\sigma(t_{k+1},X^n_{t_k})} \bigg|^{p} \bigg ).
\end{align*}
Since by Lemma \ref{nabla and Malliavin} \eqref{nM-i} we conclude that 
\begin{align*}
\e |b_x^{(k+1,l)} - b_x(t_l, X^n_{t_{l-1}})|^{2p} + \e |\sigma_x^{(k+1,l)} -
  \sigma_x(t_l, X^n_{t_{l-1}})|^{2p} \leq  C(b,\sigma,T,p) h^{p},
\end{align*}
and Lemma \ref{discrete-time-sup} implies that
$$ \e \sup_{k+1 \leq l \leq m} \Big |\nabla X^{n,t_k,X^n_{t_k}}_{t_{l-1}}\Big|^{2p} \leq  C(b,\sigma,T,p),$$
the assertion follows by Gronwall's lemma.  \bigskip

\noindent $\eqref{nM-iii}$ This is an immediate consequence of  \eqref{nM-i}.

 \end{proof}
 \bigskip 

                                  %For unnumbered Remarks, use {remnn}, {defnn}, etc.

% The proof of a result should go in the proof environment:
% \begin{proof}
% The proof goes here.
% \end{proof}

%If your paper includes appendices, then precede the first of them by the command
\appendix
%and then carry on using the \section and \subsection commands, as above.

%\section{The first appendix}

%If you include EPS (encapsulated postscript) figures in your paper,
%then please use the following commands:
%\begin{figure}
%\begin{center}
%\includegraphics{.eps}
%\caption{Caption text.}\label{}
%\end{center}
%\end{figure}

\acks
Christel Geiss would like to thank the  Erwin Schr\"odinger Institute, Vienna, for hospitality and support, where  a part of this work was written. 
% Place the text of your acknowledgements after the \acks command.
% \acks generates the heading "Acknowledgements".
% If you wish to make only one acknowledgement, use \ack.
% \ack generates the heading "Acknowledgement".

% Reference list
%
% References should be in the following form (or the BibTeX file
% apt.bst should be used):
%
% For a journal:
% Surname, Initial (year). Title of paper. {\em Journal title}
% {\bf Vol,} page--range.
%
% For a book:
% Surname, Initial (year). {\em Book title}. Publisher, Address.
%
% Note the following example of a reference list.

\end{document}